\theoremstyle{definition}
\newtheorem{defin}{Definition}[section]
\newtheorem{rem}[defin]{Remark}
\theoremstyle{plane}
\newtheorem{thm}[defin]{Theorem}
\newtheorem{prop}[defin]{Proposition}
\newtheorem{coroll}[defin]{Corollary}
\newtheorem{lemma}[defin]{Lemma}
\newcommand{\mbb}{\mathbb}
\newcommand{\mc}{\mathcal}
\newcommand{\veps}{\varepsilon}
\newcommand{\what}{\widehat}
\newcommand{\wtilde}{\widetilde}
\newcommand{\vphi}{\varphi}
\newcommand{\oline}{\overline}
\newcommand{\hra}{\hookrightarrow}
\newcommand{\g}{\gamma}
\newcommand{\vrho}{\varrho}
\newcommand{\R}{\mathbb{R}}
\newcommand{\N}{\mathbb{N}}
\newcommand{\Z}{\mathbb{Z}}
\renewcommand{\Re}{{\rm Re}\,}
\def\d{\partial}
\title{\large{\bfseries{A WELL-POSEDNESS RESULT FOR \\ HYPERBOLIC OPERATORS WITH ZYGMUND COEFFICIENTS}}}
\author{\textsl{Ferruccio Colombini} \\
\small{\textsc{Universit\`a di Pisa}} -- \small{\ttfamily{colombini@dm.unipi.it}} \vspace{0.3cm} \\
\textsl{Daniele Del Santo} \\
\small{\textsc{Universit\`a di Trieste}} -- \small{\ttfamily{delsanto@units.it}} \vspace{0.3cm} \\
\textsl{Francesco Fanelli} \\
\small{\textsc{Universit\'e Paris-Est}} --
\small{\ttfamily{francesco.fanelli@math.cnrs.fr}} \\
{\small Present address: \textsc{BCAM} -- \ttfamily{ffanelli@bcamath.org}} \vspace{0.3cm} \\
\textsl{Guy M\'etivier} \\
\small{\textsc{Universit\'e de Bordeaux 1}} -- \small{\ttfamily{guy.metivier@math.u-bordeaux1.fr}}}
\date\today
\begin{document}

\maketitle

\subsubsection*{Abstract}
{\small In this paper we prove an energy estimate with no loss of derivatives for a strictly hyperbolic operator
with Zygmund continuous second order coefficients both in time and in space. In particular, this estimate implies the well-posedness
for the related Cauchy problem. On the one hand, this result is quite surprising, because it allows to consider coefficients
which are not Lipschitz continuous in time. On the other hand, it holds true only in the very special case
of initial data in $H^{1/2}\times H^{-1/2}$. Paradifferential calculus with parameters is the main ingredient to the proof.}

\subsubsection*{Keywords}
{\small Strictly hyperbolic operators, non-Lipschitz coefficients, Zygmund regularity, energy estimates, $H^\infty$ well-posedness.}

\subsubsection*{MSC 2010}
{\small 35L15, 
35B65, 
35S50, 
35B45. 
}

\section{Introduction}

This paper is devoted to the study of the Cauchy problem for a second order strictly hyperbolic operator defined in a strip
$[0,T]\times\R^N$, for some $T>0$ and  $N\geq1$. Consider a second order operator of the form
\begin{equation} \label{def:op}
Lu\;:=\;\d^2_tu\,-\,\sum_{j,k=1}^N\d_j\left(a_{jk}(t,x)\,\d_ku\right)
\end{equation}
(with $a_{jk}=a_{kj}$ for all $j$, $k$) and assume that $L$ is strictly hyperbolic with bounded coefficients, i.e. there exist
two constants $0<\lambda_0\leq\Lambda_0$ such that
$$
\lambda_0\,|\xi|^2\;\leq\;\sum_{j,k=1}^N a_{jk}(t,x)\,\xi_j\,\xi_k\;\leq\;\Lambda_0\,|\xi|^2
$$
for all $(t,x)\in[0,T]\times\R^N$ and all $\xi\in\mbb{R}^N$.

It is well-known (see e.g. \cite{Horm} or \cite{Miz}) that, if the coefficients $a_{jk}$ are Lipschitz continuous
with respect to $t$ and only measurable in $x$, then the Cauchy problem for $L$ is well-posed in
$H^1\times L^2$. If the $a_{jk}$'s are Lipschitz continuous
with respect to $t$ and $\mc{C}^\infty _b$ (i.e. $\mc{C}^\infty$ and bounded with all their derivatives)
with respect to the space variables, one can recover the well-posedness in $H^s\times H^{s-1}$ for all $s\in\R$.
Moreover, in the latter case, one gets, for all $s\in\R$  and for a constant $C_s$ depending only on it,  the following
energy estimate:
\begin{eqnarray}
 &&\sup_{0\le t \le T} \biggl(\|u(t, \cdot)\|_{H^{s+1}}\,  +
 \|\partial_t u(t,\cdot)\|_{H^s}\biggr)\,\leq \label{est:no-loss} \\
&&\qquad\qquad\qquad\qquad\qquad\qquad
\leq\, C_s \left(\|u(0, \cdot)\|_{H^{s+1}}+
 \|\partial_t u(0,\cdot)\|_{H^s} + \int_0^{T}  \|  L u(t,\cdot)\|_{H^s}\, dt\right) \nonumber
\end{eqnarray}
for all $u\in\mc{C}([0,T];H^{s+1}(\R^N))\,\cap\,\mc{C}^1([0,T];H^s(\R^N))$ such that $Lu\in L^1([0,T];H^s(\R^N))$.
Let us explicitly remark that the previous inequality involves no loss of regularity for the function $u$:
estimate \eqref{est:no-loss} holds for every $u\in\mc{C}^2([0,T];H^\infty(\R^N))$ and the Cauchy problem for $L$ is
well-posed in $H^\infty$ \emph{with no loss of derivatives}.

If the Lipschitz continuity (in time) hypothesis is not fulfilled, then \eqref{est:no-loss} is no more true, in general.
Nevertheless, one can still try to recover $H^\infty$ well-posedness, \emph{with a finite loss of derivatives} in the
energy estimate.

The first case to consider is the case of the coefficients $a_{jk}$ depending only on $t$:
$$
 Lu\;=\;\d^2_tu\,-\,\sum_{j,k=1}^N a_{jk}(t)\,\d_j\d_ku\,.
$$
In \cite{C-DG-S}, Colombini, De Giorgi and Spagnolo assumed the coefficients to satisfy an integral log-Lipschitz condition:
\begin{equation} \label{hyp:int-LL}
\int^{T-\veps}_0\left|a_{jk}(t+\veps)\,-\,a_{jk}(t)\right|dt\;\leq\;C\,\veps\,\log\left(1\,+\,\frac{1}{\veps}\right)\,,
\end{equation}
for some constant $C>0$ and all $\veps\in\,]0,T]$.
More recently (see paper \cite{Tar}), Tarama analysed instead the problem when coefficients satisfy an integral
log-Zygmund condition: there exists a constant $C>0$ such that, for all $j$, $k$ and all $\veps\in\,]0,T/2[\,$, one has
\begin{equation} \label{hyp:int-LZ}
 \int^{T-\veps}_\veps\left|a_{jk}(t+\veps)\,+\,a_{jk}(t-\veps)\,-\,2\, a_{jk}(t)\right|dt\;\leq\;
C\,\veps\,\log\left(1\,+\,\frac{1}{\veps}\right)\,.
\end{equation}
On the one hand, this condition is somehow related, for a function $a\in\mc{C}^2([0,T])$, to the pointwise condition
$|a(t)|+|t\,a'(t)|+|t^2\,a''(t)|\,\leq\,C$ (considered in \cite{Yama} by Yamazaki). On the other hand,
it's obvious that if the $a_{jk}$'s satisfy \eqref{hyp:int-LL}, then they satisfy  also \eqref{hyp:int-LZ}: so,
a more general class of functions is considered. \\
Both in \cite{C-DG-S} and \cite{Tar}, the authors proved an energy estimate \emph{with a fixed loss of derivatives}: there exists
a constant $\delta>0$ such that, for all $s\in\R$, the inequality
\begin{eqnarray}
  &&\sup_{0\le t \le T} \biggl(\|u(t, \cdot)\|_{H^{s+1-\delta}}\,  +
 \|\partial_t u(t,\cdot)\|_{H^{s-\delta}}\biggr)\,\leq \label{est:c-loss} \\
&&\qquad\qquad\qquad\qquad\qquad\qquad
\leq\, C_s \left(\|u(0, \cdot)\|_{H^{s+1}}+
 \|\partial_t u(0,\cdot)\|_{H^s} + \int_0^{T}  \|  L u(t,\cdot)\|_{H^s}\, dt\right) \nonumber
\end{eqnarray}
holds true for all $u\in\mc{C}^2([0,T];H^\infty(\R^N))$, for some constant $C_s$ depending only on $s$.

Also the case of dependence of the $a_{jk}$'s both in time and space was deeply studied. \\
In paper \cite{C-L}, Colombini and Lerner assumed an isotropic pointwise log-Lipschitz
condition, i.e. there exists a constant $C>0$ such that, for all $\zeta=(\tau,\xi)\in\R\times\R^N$, $\zeta\neq0$, one has
$$
\sup_{z=(t,x)\in\R\times\R^N}\;\left|a_{jk}(z+\zeta)\,-\,a_{jk}(z)\right|\;\leq\;C\,|\zeta|\,
\log\left(1\,+\,\frac{1}{|\zeta|}\right)\,.
$$
Mixing up a Tarama-like hypothesis (concerning the dependence on time) with the previous one of Colombini and Lerner
was instead considered in \cite{C-DS} in the case of space dimension $1$, and then in \cite{C-DS-F-M} in the more
general situation of $N\geq1$. The authors supposed the coefficients to be log-Zygmund continuous
in the time variable $t$, uniformly with respect to $x$, and log-Lipschitz continuous in the space variables,
uniformly with respect to $t$. This hypothesis reads as follow: there exists a constant $C$ such that, for
all $\tau>0$ and all $y\in\mbb{R}^N\!\setminus\!\{0\}$, one has
\begin{eqnarray*}
 \sup_{(t,x)}\left|a_{jk}(t+\tau,x)+a_{jk}(t-\tau,x)-2a_{jk}(t,x)\right| & \leq &
C\,\tau\,\log\left(1\,+\,\frac{1}{\tau}\right)\\ 
 \sup_{(t,x)}\left|a_{jk}(t,x+y)-a_{jk}(t,x)\right| & \leq & C\,|y|\,\log\left(1\,+\,\frac{1}{|y|}\right). 
\end{eqnarray*}
In all these cases, one can prove an energy estimate \emph{with a loss of derivatives increasing in time}:
for all $s\in\,]0,s_0[$ (the exact value of $s_0$ changes from statement to statement),
there exist positive constants $\beta$ and $C_s$ and a time $T^*\in\,]0,T]$ such that
\begin{eqnarray}
  &&\sup_{0\le t \le T^*} \biggl(\|u(t, \cdot)\|_{H^{-s+1-\beta t}}\,  +
 \|\partial_t u(t,\cdot)\|_{H^{-s-\beta t}}\biggr)\,\leq \label{est:t-loss} \\
&&\qquad\qquad\qquad\qquad\qquad
\leq\, C_s \left(\|u(0, \cdot)\|_{H^{-s+1}}+
 \|\partial_t u(0,\cdot)\|_{H^{-s}} + \int_0^{T^*}  \|  L u(t,\cdot)\|_{H^{-s-\beta t}}\, dt\right) \nonumber
\end{eqnarray}
for all $u\in\mc{C}^2([0,T];H^\infty(\R^N))$.

In particular, from both inequalities \eqref{est:c-loss} and \eqref{est:t-loss}, if coefficients $a_{jk}$ are $\mc{C}^\infty_b$
with respect to $x$, one can still recover the $H^\infty$ well-posedness for the associated Cauchy problem, but,
as already pointed out, with a finite loss of derivatives.

Such a loss, in a certain sense, cannot be avoided. As a matter of fact, Cicognani and Colombini proved in \cite{Cic-C} that,
if the regularity of the coefficients is measured by a modulus of continuity, then any intermediate modulus
of continuity between the Lipschitz and the log-Lipschitz ones necessarily entails a loss of regularity, which however can be made
arbitrarly small. Moreover, they showed also that, in the log-Lipschitz instance, a loss of derivatives proportional to time,
as found in \cite{C-L}, actually has to occur.

Nevertheless, in the case of dependence of coefficients only on time, a special fact happens. In the above mentioned paper
\cite{Tar}, Tarama considered also $a_{jk}$'s satisfying an integral Zygmund condition:
there exists a constant $C>0$ such that, for all $j$, $k$ and all $\veps\in\,]0,T/2[\,$, one has
\begin{equation} \label{hyp:int-Z}
 \int^{T-\veps}_\veps\left|a_{jk}(t+\veps)\,+\,a_{jk}(t-\veps)\,-\,2\, a_{jk}(t)\right|\,dt\;\leq\;C\,\veps\,.
\end{equation}
Under this assumption, he was able to prove an energy estimate which involves no loss of derivatives, and so well-posedness
in $H^1\times L^2$ and, more in general, in $H^s\times H^{s-1}$ for all $s\in\R$. To get this result,
he resorted to the main ideas of paper
\cite{C-DG-S}: he smoothed out the coefficients by use of
a convolution kernel, and he linked the approximation parameter (say) $\veps$ with the dual variable, in order to perform different
regularizations in different zones of the phase space. However, the key to the proof was defining a new energy, which
involves (by differentiation in time) also second derivatives of the approximated coefficients $a_\veps(t)$. In particular,
his idea was to delete, in differentiating energy in time, the terms presenting both the first derivative $a'_\veps(t)$, which
has bad behaviour, and $\d_tu$, for which one cannot gain regularity.

Now, what does it happen if we consider coefficients depending also on the space variable?
In this case, the condition becomes the following: there exists a positive constant $C$ such that, fixed
any $1\,\leq\,i,j\,\leq\,N$, for all $\tau\geq0$ and all $y\in\R^N$ one has
\begin{equation} \label{hyp:point-Z}
 \sup_{(t,x)}\,\biggl|a_{jk}(t+\tau,x+y)\,+\,a_{jk}(t-\tau,x-y)\,-\,2\,a_{jk}(t,x)\biggr|\;\leq\;C\,
\biggl(\tau\,+\,|y|\biggr)\,.
\end{equation}
On the one hand, keeping in mind the strict embeddings
\begin{equation} \label{emb}
\rm Lip\;\;\;\hra\;\;\;Zyg\;\;\;\hra\;\;\;log\!-\!Lip\,,
\end{equation}
the result of \cite{Cic-C} implies that (a priori) a loss, even if arbitrarly small, always occur.
On the other hand, Zygmund regularity is a condition on second variation, hence it is not related to the modulus of continuity
and it runs off the issue of Cicognani and Colombini. Moreover,
Lipschitz (in time) assumption is only a sufficient condition to get estimate \eqref{est:no-loss},
and Tarama's result seems to suggest us that well-posedness in $H^s\times H^{s-1}$ can be recovered also in this case,
at least for some special $s$.

In the present paper we give a partial answer to the previous question.
We assume hypothesis \eqref{hyp:point-Z} on the $a_{jk}$'s, i.e. a pointwise Zygmund condition
with respect to all the variables, and we get an energy estimate \emph{without any loss of derivatives}, but only in the space
$H^{1/2}\times H^{-1/2}$. In fact, we are able to prove our result considering a complete second
order operator: we take first order coefficients which are $\theta$-H\"older continuous (for some $\theta>1/2$)
with respect to the space variable, and the coefficients of the $0$-th order term only bounded. Let us point out that
from this issue it immediately follows the $H^\infty$ well-posedness with no loss of derivatives for an operator whose coefficients
are $\mc{C}^\infty_b$ with respect to $x$. \\
The first fundamental step to obtain the result is passing from Zygmund continuous functions
to more general symbols having such a regularity, and then analysing the properties of the related paradifferential
operators. In doing this, we make a heavy use of the paradifferential calculus with parameters, as introduced and developed
in \cite{M-1986} and \cite{M-Z}. In particular, it allows us to recover positivity of the paradifferential operator associated
to a positive symbol: this is a crucial point in our analysis. \\
The second key ingredient to our proof is defining a new energy. It is only a slight modification of the original one of Tarama:
we change the weight-functions involved in it and we replace product by them with action of the related
paradifferential operators. \\
The last basic step relies in approximating the operator $L$, defined in \eqref{def:op}, with a paradifferential
operator of order $2$. The price to pay is a remainder term, which is however easy to control by use of the energy. \\
All these operations have the effect to produce, in energy estimates, very special cancellations at the level of principal and
subprincipal parts of the operators involved in the computations. These deletions allow us to get the result, but
they seem to occur only in the $H^{1/2}\times H^{-1/2}$ framework.

Therefore, considerations made before, under hypothesis \eqref{hyp:int-Z}, have not
found an answer, yet, and it is not clear at all if well-posedness in $H^s\times H^{s-1}$, for $s$ which varies
in some interval containing $1/2$, holds true or not.

%
%
\medbreak
Before entering into the details of the problem, let us give an overview of the paper.

In the first section, we will present our work setting, giving the main definitions and stating our results: a basic
energy estimate for operator \eqref{def:op} under hypothesis \eqref{hyp:point-Z}, and a well-posedness issue which
immediately follows from it.

The next section is devoted to the tools we need to handle our problem. They are mostly based on Littlewood-Paley Theory and
classical Paradifferential Calculus, introduced first by J.-M. Bony in \cite{Bony}. Here we will follow the presentation
given in \cite{B-C-D}. Moreover, we need also to introduce new classes of Sobolev spaces, of logarithmic type, already studied
in \cite{C-M}. Then we will quote some basic properties of Zygmund continuous functions
and we will study their convolution with a smoothing kernel.
A presentation of a new version of Paradifferential Calculus, depending on some parameter $\g\geq1$ (see papers
\cite{M-1986} and \cite{M-Z}) will follow. This having been done, we will make immediately use of the Paradifferential
Calculus with parameters to pass from such functions to more general symbols, having Zygmund regularity with respect
to time and space and smooth in the $\xi$ variable. Moreover, we will associate to them new paradifferential operators,
for which we will develop also a symbolic calculus.

In the end, we will be able to takle the proof of our energy estimate. The main efforts are defining a new energy and
replacing the elliptic part of $L$ with a suitable paradifferential operator. Then, the rest of the proof is classical:
we will differentiate the energy with respect to time and we will estimate this derivative in terms of the energy itself. Gronwall's
Lemma will enable us to get the thesis.

Finally, section \ref{s:H^inf} will be devoted to the well-posedness in the space $H^\infty$ of the Cauchy problem related to
$L$, when its coefficients are assumed smooth enough. This result is a straightforward consequence
of the previous one, and can be recovered following the same steps of the proof. Therefore, we will restrict ourselves
to point out only the main differencies, without repeating the complete argument.

\subsubsection*{Acknowledgements}
The third author was partially supported by Grant MTM2011-29306-C02-00, MICINN, Spain, ERC Advanced Grant FP7-246775 NUMERIWAVES, ESF Research Networking Programme OPTPDE and Grant PI2010-04 of the Basque Government.

\section{Basic definitions and main result}

This section is devoted to the presentation of our main result, i.e. an energy estimate for a complete hyperbolic operator with
Zygmund continuous second order coefficients. First of all, let us introduce a definition.
\begin{defin} \label{d:zyg}
 A function $f\in L^\infty(\R^N)$ belongs to the Zygmund space $Z(\R^N)$ if the quantity
$$
|f|_Z\;:=\;\sup_{\zeta\in\R^N,|\zeta|<1}\,\,\sup_{z\in\R^N}
\biggl(\left|f(z+\zeta)\,+\,f(z-\zeta)\,-\,2\,f(z)\right|\,\cdot\,|\zeta|^{-1}\biggr)\;<\;+\infty\,.
$$
Moreover we define $\|f\|_Z\,:=\,\|f\|_{L^\infty}\,+\,|f|_Z$.
\end{defin}

Let us consider now the operator over $[0,T]\times\R^N$ (for some $T>0$ and $N\geq1$) defined by
\begin{equation} \label{eq:op}
 Lu\,=\,\d^2_tu\,-\,\sum_{i,j=1}^N\d_i\left(a_{ij}(t,x)\,\d_ju\right)\,+\,b_0(t,x)\,\d_tu\,+\,
\sum_{j=1}^Nb_j(t,x)\,\d_ju\,+\,c(t,x)\,u\,,
\end{equation}
and let us suppose $L$ to be strictly hyperbolic with bounded coefficients, i.e. there exist two positive constants
$0<\lambda_0\leq\Lambda_0$ such that, for all $(t,x)\in[0,T]\times\mbb{R}^N$ and all $\xi\in\mbb{R}^N$, one has
\begin{equation} \label{h:hyp}
 \lambda_0\,|\xi|^2\,\leq\,\sum_{i,j=1}^N a_{ij}(t,x)\,\xi_i\,\xi_j\,\leq\,\Lambda_0\,|\xi|^2\,.
\end{equation}

Moreover, we assume the coefficients of the principal part of $L$ to be isotropically Zygmund continuous, uniformly
over $[0,T]\times\R^N$. In particular, there exists a constant $K_0$ such that, fixed any $1\,\leq\,i,j\,\leq\,N$,
for all $\tau\geq0$ and all $y\in\R^N$, one has
\begin{equation}
 \sup_{(t,x)}\,\biggl|a_{ij}(t+\tau,x+y)\,+\,a_{ij}(t-\tau,x-y)\,-\,2\,a_{ij}(t,x)\biggr|\;\leq\;K_0\,
\biggl(\tau\,+\,|y|\biggr)\,. \label{h:Z_tx}
\end{equation}
Finally, let us suppose also that, for some $\theta>1/2$, we have
\begin{equation} \label{h:lower-order}
 b_j\,\in\,L^\infty([0,T];\mc{C}^\theta(\R^N))\quad\forall\;0\leq j\leq N\qquad\mbox{ and }\qquad
c\,\in\,L^\infty([0,T]\times\R^N)\,.
\end{equation}

Under these hypothesis, one can prove the following result.
\begin{thm} \label{th:en_est}
 Let $L$ be the operator defined by \eqref{eq:op}, and assume it is strictly hyperbolic with bounded coefficients,
i.e. relation \eqref{h:hyp} holds true. Moreover, let us suppose the coefficients $a_{ij}$ to fulfill condition \eqref{h:Z_tx},
and the $b_j$'s and $c$ to verify hypothesis \eqref{h:lower-order}, for some $\theta>1/2$.

Then there exist positive constants $C$, $\lambda$ such that the inequality
\begin{eqnarray}
 &&\sup_{0\le t \le T} \biggl(\|u(t, \cdot)\|_{H^{1/2}}\,  +\,
 \|\partial_t u(t,\cdot)\|_{H^{-1/2}}\biggr)\,\leq \label{est:thesis} \\
&&\qquad\qquad\qquad\quad
\leq\, C\,e^{\lambda T}\, \left(\|u(0, \cdot)\|_{H^{1/2}}\,+\,
 \|\partial_t u(0,\cdot)\|_{H^{-1/2}}\, +\, \int_0^{T}e^{-\lambda t} \, \|L u(t,\cdot)\|_{H^{-1/2}}\, dt\right) \nonumber
\end{eqnarray}
holds true for all $u\in\mc{C}^2([0,T];H^\infty(\R^N))$.
\end{thm}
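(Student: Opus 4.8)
The plan is to follow Tarama's strategy, but transplanted from product-by-coefficients to action-by-paradifferential-operators, using the paradifferential calculus with parameters announced in the introduction. First I would reduce matters to the principal part: since $b_j\in L^\infty([0,T];\mc{C}^\theta)$ with $\theta>1/2$ and $c\in L^\infty$, the lower-order terms map $H^{1/2}$ and $H^{-1/2}$ boundedly into $H^{-1/2}$ (here the threshold $\theta>1/2$ is exactly what is needed so that multiplication by $b_j$ is bounded $H^{-1/2}\to H^{-1/2}$ after one derivative), so they contribute only terms controlled by the energy itself, hence absorbed by Gronwall at the end. Next I would regularize the coefficients: set $a_{ij,\veps}$ to be the mollification of $a_{ij}$ at scale $\veps$, and then — following \cite{C-DG-S} and its successors — link $\veps$ to the dual variable $\xi$ by $\veps\sim 1/|\xi|$ (more precisely $1/\sqrt{|\xi|^2+\g^2}$ with the parameter $\g\ge1$), so that in the $\xi$-th dyadic block the coefficient is frozen at the right scale. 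The Zygmund hypothesis \eqref{h:Z_tx} is precisely calibrated to this: it gives $\|a_{ij,\veps}-a_{ij}\|_{L^\infty}\lesssim\veps$ together with $\|\d_t^2 a_{ij,\veps}\|_{L^\infty}\lesssim 1/\veps$ and $\|\d_t a_{ij,\veps}\|_{L^\infty}\lesssim\log(1/\veps)$ type bounds (the second-difference control trades two derivatives for $1/\veps$, one derivative for a log). One then replaces the elliptic operator $\sum\d_i(a_{ij}\d_j\,\cdot\,)$ by a paradifferential operator with symbol essentially $\sum a_{ij,\veps}(t,x)\,\xi_i\xi_j$, at the cost of a remainder that, thanks to the $\veps\sim 1/|\xi|$ coupling and the Zygmund modulus, lands in $H^{-1/2}$ with norm controlled by $\|u\|_{H^{1/2}}$ — again absorbable.

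The heart of the matter is the choice of energy. Write $\alpha(t,x,\xi)$ for a square root of the (regularized, frozen) principal symbol, so $\alpha^2\approx\sum a_{ij,\veps}\xi_i\xi_j$, and let $T_\alpha$, $T_{1/\alpha}$, etc. denote the associated paradifferential operators with parameter. Tarama's insight, in the $x$-independent case, was to add to the naive energy $\|\d_t u\|^2+\| \text{(elliptic)}^{1/2} u\|^2$ a correction built from $\d_t\alpha/\alpha$ so that, upon differentiating in $t$, the dangerous cross-terms pairing $\d_t a_\veps$ (bad: only a log bound, no decay) against $\d_t u$ (for which no regularity can be gained) cancel identically against the contribution of the correction. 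Here I would define, for $v:=u$ (or rather its dyadic pieces at the appropriate regularity), something of the shape
\begin{eqnarray*}
E(t)\;:=\;\|T_{\alpha}\,\Lambda^{-1/2}u\|_{L^2}^2\;+\;\|\d_t(\Lambda^{-1/2}u)\;-\;T_{\beta}\,\Lambda^{-1/2}u\|_{L^2}^2\;+\;(\text{lower order}),
\end{eqnarray*}
where $\Lambda^{-1/2}$ is the Fourier multiplier of order $-1/2$ bringing us to the $H^{1/2}\times H^{-1/2}$ level, and $\beta$ is a carefully chosen symbol of order $0$ (morally $\tfrac12\,\d_t\alpha/\alpha$, symmetrized) — the precise $\beta$ is dictated by requiring the principal and subprincipal terms in $\frac{d}{dt}E(t)$ to telescope. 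The reason the argument is pinned to $H^{1/2}\times H^{-1/2}$ and no other Sobolev pair is this telescoping: the extra terms produced when $\Lambda^{s}$ does not commute with the paradifferential operators, combined with the $\log$-factors coming from $\d_t a_\veps$, only reorganize into a total time-derivative (hence into the energy) when $s+1/2$ and $s-1/2$ are symmetric about $0$, i.e. $s=1/2$. Positivity of $E(t)$ — i.e. $E(t)\sim \|u\|_{H^{1/2}}^2+\|\d_t u\|_{H^{-1/2}}^2$ — is where "positivity of the paradifferential operator associated to a positive symbol" (quoted in the introduction as the payoff of the calculus with parameters) is used: $T_\alpha^*T_\alpha$ is bounded below by $\lambda_0\Lambda^1$ minus a bounded (order $\le 0$, suitably small once $\g$ is large) error, and the Gårding-type inequality with parameter makes the error genuinely absorbable by taking $\g$ large.

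After these preparations the computation is routine in the sense that everything is designed to make it work: differentiate $E(t)$, substitute the equation $Lu=f$ to replace $\d_t^2 u$, use the symbolic calculus from the tools section to commute $\Lambda^{-1/2}$ past $T_\alpha,T_\beta$ up to acceptable remainders, exploit the designed cancellations among the principal/subprincipal terms, bound the surviving terms by $\|f(t)\|_{H^{-1/2}}\sqrt{E(t)}+ C\,\log(\ldots)\,E(t)$ (the log is harmless — it still integrates, and in fact here, because of the Zygmund rather than log-Zygmund hypothesis, one even gets a clean $C\,E(t)$), and conclude by Gronwall: $\sqrt{E(t)}\le C\big(\sqrt{E(0)}+\int_0^t\|f\|_{H^{-1/2}}\big)e^{\lambda t}$, which is \eqref{est:thesis} after using $E\sim$ the claimed norms and the equivalence $\int_0^T e^{-\lambda t}\|Lu\|_{H^{-1/2}}dt$. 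The main obstacle — the step I would budget the most care for — is verifying that the remainders from three separate approximations (mollifying $a_{ij}$, paralinearizing the elliptic part, and commuting the order-$(-1/2)$ multiplier through the paradifferential operators) all genuinely land at the right level with the right smallness, and in particular that the $\g$-dependence is uniform enough that a single large $\g$ simultaneously buys Gårding positivity of $E$ and absorbs every error; the delicate bookkeeping is precisely the reason the paper develops a bespoke symbolic calculus for Zygmund symbols with parameters rather than citing an off-the-shelf one.
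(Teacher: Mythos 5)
Your overall architecture matches the paper's: mollify the $a_{jk}$ in time at scale $\veps=(\g^2+|\xi|^2)^{-1/2}$, paralinearize the elliptic part up to an acceptable remainder, treat $b_j$ (using $\theta>1/2$) and $c$ as perturbations, get positivity from the parameter-dependent G\r{a}rding inequality for $\g$ large, and close with Gronwall. The genuine gap is in the energy itself, which is the heart of the proof. You put the \emph{time-independent} weight $\Lambda^{-1/2}$ on $\d_tu$ and append an order-zero paradifferential correction $T_\beta\Lambda^{-1/2}u$ with $\beta$ morally $\tfrac12\,\d_t\alpha/\alpha$. With that structure Tarama's cancellation fails. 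Writing $v_{\rm s}:=\Lambda^{-1/2}\d_tu-T_\beta\Lambda^{-1/2}u$, one has
\begin{equation*}
\d_tv_{\rm s}\;=\;\Lambda^{-1/2}\d_t^2u\;-\;T_{\d_t\beta}\Lambda^{-1/2}u\;-\;T_\beta\Lambda^{-1/2}\d_tu\,,
\end{equation*}
and the last term --- the one pairing $\d_ta_\veps$, which after the coupling $\veps\sim\Lambda^{-1}$ is only $O(\log(1+\g+|\xi|))$, against $\d_tu$, on which no regularity can be gained --- survives: $2\Re\!\left(v_{\rm s},T_\beta v_{\rm s}\right)_{L^2}=\left((T_\beta+T_\beta^*)v_{\rm s},v_{\rm s}\right)_{L^2}$ is of size $\|v_{\rm s}\|^2_{H^{(1/2)\log}}$, a logarithmic loss over $E(t)$ that Gronwall cannot absorb without a loss of derivatives. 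No choice of the order-zero symbol $\beta$ repairs this, because the cancelling term must come from differentiating in time the weight that sits on $\d_tu$, and your weight $\Lambda^{-1/2}$ has zero time derivative.

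The correct choice is $v=T_{\alpha^{-1/4}}\d_tu-T_{\d_t(\alpha^{-1/4})}u$ and $w=T_{\alpha^{1/4}}u$, with $\alpha=\sum a_{jk,\veps}\xi_j\xi_k+\g^2$ of order two: then $\d_tv=T_{\alpha^{-1/4}}\d_t^2u-T_{\d_t^2(\alpha^{-1/4})}u$ \emph{identically}, the surviving coefficient $\d_t^2(\alpha^{-1/4})$ has order $1/2$ thanks to the Zygmund bound $|\d_t^2a_\veps|\lesssim 1/\veps$, and the remaining $\log$-carrying terms in $\d_t(\alpha^{\pm1/4})$ cancel \emph{pairwise between the two pieces of the energy} via $\d_t(\alpha^{1/4})=-\alpha^{1/2}\d_t(\alpha^{-1/4})$ together with a second, subprincipal-level cancellation in the operator $Q=(T_{\alpha^{1/4}})^*T_{\alpha^{1/4}}-(T_{\alpha^{-1/4}})^*T_{\alpha^{-1/4}}\Re T_\alpha$. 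That second cancellation is extremely rigid: weights of the form $\alpha^{-1/4}f$ with $f$ non-trivial leave a non-zero subprincipal symbol of logarithmic order, and your ansatz amounts to $f=\alpha^{1/4}\Lambda^{-1/2}$, which is non-constant. So your heuristic for why $s=1/2$ is forced (symmetry of $s\pm1/2$ about zero) is not the actual mechanism, and the telescoping you invoke does not occur for the energy you wrote down.
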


From the previous estimate, which involves no loss of derivatives, one can recover, in a standard way,
the well-posedness issue in the space $H^{1/2}\times H^{-1/2}$.
\begin{coroll} \label{c:wp}
 Let us consider the Cauchy problem
$$
\left\{\begin{array}{l}
        Lu\;=\;f \\[1ex]
	u_{|t=0}\;=\;u_0\,,\quad\d_tu_{|t=0}\;=\;u_1\,,
       \end{array}\right. \leqno{(C\!P)}
$$
where $L$ is defined by conditions \eqref{eq:op}, \eqref{h:hyp}, \eqref{h:Z_tx} and \eqref{h:lower-order},
and $f\in L^1([0,T];H^{-1/2})$.

Then $(C\!P)$ is well-posed in the space $H^{1/2}\times H^{-1/2}$, globally on the time interval $[0,T]$.
\end{coroll}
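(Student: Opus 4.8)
\textbf{Proof plan for Corollary \ref{c:wp}.} The plan is to deduce existence, uniqueness and continuous dependence on the data from the a priori estimate \eqref{est:thesis}, following the classical scheme for hyperbolic Cauchy problems. The preliminary point is that \eqref{est:thesis}, although stated for $u\in\mc{C}^2([0,T];H^\infty)$, in fact holds for every $u$ in the natural energy class $\mc{X}:=\mc{C}([0,T];H^{1/2}(\R^N))\cap\mc{C}^1([0,T];H^{-1/2}(\R^N))$ with $Lu\in L^1([0,T];H^{-1/2})$: given such a $u$, I would regularize it by a mollifier in time and space, $u_\delta:=\rho_\delta*u$ (after a suitable extension of $u$ outside the strip), so that $u_\delta$ is smooth, apply \eqref{est:thesis} to $u_\delta$ and let $\delta\to0$. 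The only non-trivial term is the commutator $[L,\rho_\delta*]u=Lu_\delta-\rho_\delta*(Lu)$, whose principal part reduces to the family $\d_i([\rho_\delta*,a_{ij}]\d_j u)$: it tends to $0$ because \eqref{h:Z_tx} forces the $a_{ij}$ to be log-Lipschitz — hence in $\mc{C}^\alpha$ for every $\alpha<1$ — and the lower order coefficients already satisfy \eqref{h:lower-order}. Granting this extended estimate, uniqueness and Lipschitz dependence of the solution on $(u_0,u_1,f)\in H^{1/2}\times H^{-1/2}\times L^1([0,T];H^{-1/2})$ follow at once by applying \eqref{est:thesis} to the difference of two solutions. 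Since $L$ is reversible in time, the same argument produces the analogous backward estimate, controlling the data at $t=0$ by the norms at any later time; I would keep this for the regularity step below.

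It remains to construct a solution. Here I would mollify the coefficients in $(t,x)$, obtaining $a_{ij}^\veps$, $b_j^\veps$, $c^\veps$ which are $\mc{C}^\infty_b$ in space and Lipschitz in time, and observe that the structural constants are stable: strict hyperbolicity \eqref{h:hyp} holds with the \emph{same} $\lambda_0,\Lambda_0$; one has $|a_{ij}^\veps|_Z\le|a_{ij}|_Z$ since convolution against a probability density does not increase the Zygmund seminorm; and the $\mc{C}^\theta$-norms of the $b_j^\veps$ and the $L^\infty$-norm of $c^\veps$ are controlled. Hence Theorem~\ref{th:en_est} applies to the regularized operator $L^\veps$ with constants $C$, $\lambda$ \emph{independent of $\veps$}. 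The approximate problem $L^\veps u^\veps=f^\veps$, $u^\veps_{|t=0}=u_0^\veps$, $\d_tu^\veps_{|t=0}=u_1^\veps$, with $f^\veps$, $u_0^\veps$, $u_1^\veps$ smooth approximations of $f$, $u_0$, $u_1$, is well-posed in $H^\infty$ by the classical theory, and \eqref{est:thesis} bounds $\sup_{[0,T]}(\|u^\veps(t)\|_{H^{1/2}}+\|\d_tu^\veps(t)\|_{H^{-1/2}})$ uniformly in $\veps$. I would then extract a subsequence converging weakly-$*$, $u^\veps\to u$ in $L^\infty([0,T];H^{1/2})$ and $\d_tu^\veps\to\d_tu$ in $L^\infty([0,T];H^{-1/2})$, and pass to the limit in $L^\veps u^\veps=f^\veps$ in the sense of distributions: the only delicate products, $a_{ij}^\veps\d_ju^\veps$, $b_0^\veps\d_tu^\veps$ and $b_j^\veps\d_ju^\veps$, converge because $a_{ij}^\veps\to a_{ij}$ and $b_j^\veps\to b_j$ strongly in $L^\infty$ (the coefficients being continuous) while the derivatives of $u^\veps$ converge weakly. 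Thus $Lu=f$ in $\mc{D}'$, with $u\in L^\infty([0,T];H^{1/2})$ and $\d_tu\in L^\infty([0,T];H^{-1/2})$.

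Finally I would recover the regularity stated in the Corollary. Using the equation, $\d_t^2u\in L^\infty([0,T];H^{-3/2})$, so $u\in\mc{C}^1([0,T];H^{-3/2})$; combined with the uniform $H^{1/2}$ bound, a standard Lions--Magenes type interpolation argument gives $u\in\mc{C}_w([0,T];H^{1/2})$ and $\d_tu\in\mc{C}_w([0,T];H^{-1/2})$, so the initial conditions $u(0)=u_0$, $\d_tu(0)=u_1$ make sense and are attained. To upgrade weak to strong continuity I would invoke the forward and backward estimates of the first step — now legitimate for $u$ itself — which squeeze $t\mapsto\|u(t)\|_{H^{1/2}}^2+\|\d_tu(t)\|_{H^{-1/2}}^2$ between two continuous functions sharing its endpoint values, forcing it to be continuous; hence $u\in\mc{X}$, which completes the proof of well-posedness on $[0,T]$.

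The compactness argument, once the uniform bounds are available, is routine; I expect the genuine obstacle to be the density step of the first paragraph, i.e. showing that \eqref{est:thesis} persists for merely $H^{1/2}\times H^{-1/2}$ data. In the non-Lipschitz setting the commutator $[L,\rho_\delta*]$ is controlled only with a logarithmic loss, so this step really relies on the logarithmic Sobolev spaces of Section~2, and it is also precisely what makes the final passage from weak to strong continuity in time work.
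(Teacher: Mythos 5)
Your overall scheme --- extend the a priori estimate to the natural energy class, deduce uniqueness and continuous dependence from it, and construct solutions by mollifying coefficients and data and passing to the weak-$*$ limit --- is precisely the ``standard way'' the paper alludes to (the paper itself gives no further detail for this corollary). The existence half of your argument is sound: the constants in Theorem \ref{th:en_est} depend only on $\lambda_0$, $\Lambda_0$, the Zygmund seminorms of the $a_{ij}$, the $\mc{C}^\theta$ norms of the $b_j$ and $\|c\|_{L^\infty}$, all of which are stable under mollification of the coefficients, so the bounds on $u^\veps$ are uniform and the passage to the limit in $\mc{D}'$ works as you describe.

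The gap is in your first step, and you have in fact put your finger on it without closing it. For coefficients that are merely Zygmund --- hence log-Lipschitz, and in $\mc{C}^\alpha$ only for $\alpha<1$ --- the commutator $\d_i\bigl([\rho_\delta*,a_{ij}]\d_j u\bigr)$ is \emph{not} $o(1)$ in $L^1([0,T];H^{-1/2})$ for $u$ in the energy class. Splitting $a_{ij}=T_{a_{ij}}+(a_{ij}-T_{a_{ij}})$, the remainder part of the commutator does vanish in the right norm (by the same mechanism as in Lemma \ref{l:L->T}), but the commutator of the mollifier with $T_{a_{ij}}\d_j$ has principal symbol comparable to $\d_\xi\what{\rho}(\delta\xi)\,\d_x\sigma_{a_{ij}}\,\xi_j$, and since $\d_x\sigma_{a_{ij}}$ only obeys the logarithmic bound of Lemma \ref{l:symb}, on frequencies $|\xi|\sim\delta^{-1}$ this is of size $\log(1/\delta)$. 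The error term in the mollified estimate therefore diverges logarithmically, and the extension of \eqref{est:thesis} to the energy class --- hence uniqueness, and also your final squeezing argument for strong continuity in time, which invokes the same extended estimates --- does not follow as written; appealing to $\mc{C}^\alpha$ regularity with $\alpha<1$ only makes matters worse, since then the commutator loses a positive power of $\delta^{-1}$. Two standard repairs: (i) prove uniqueness in a \emph{weaker} scale, observing that by the embedding \eqref{emb} the coefficients satisfy the log-Lipschitz/log-Zygmund hypotheses of \cite{C-L}, \cite{C-DS-F-M}, whose estimate \eqref{est:t-loss} has a built-in loss of derivatives that absorbs the $\log(1/\delta)$ divergence of the commutator; uniqueness in the larger space implies uniqueness in $H^{1/2}\times H^{-1/2}$, and continuous dependence plus strong continuity in time then follow by showing the smooth approximations form a Cauchy sequence in $\mc{C}([0,T];H^{1/2})\cap\mc{C}^1([0,T];H^{-1/2})$, the estimate being applied to differences of \emph{smooth} functions only; or (ii) a duality argument against solutions of the backward adjoint problem, which has the same structure. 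Either of these should replace your density step.
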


\section{Tools}

In this section we want to introduce the main tools, from Fourier Analysis, we will need to prove Theorem \ref{th:en_est}.
Most of them are the same we resorted to in the recent paper \cite{C-DS-F-M}, where we considered the case of
coefficients log-Zymung continuous with respect to time, and log-Lipschitz continuous in space variables.
Nevertheless, for a seek of completeness, we will give here the most of the details.

The first part is devoted to the classical Littlewood-Paley Theory and to the presentation of new Sobolev spaces, of
logarithmic type, introduced first in \cite{C-M}. \\
Then we will analyse some properties of the Zygmund continuous functions. We will consider also convolution in time
with a smoothing kernel. \\
In the next subsection we will present the Littlewood-Paley Theory depending on a parameter $\g\geq1$:
this modification permits a more refined study of our problem. In particular, we will introduce
the new class of low regularity symbols we will deal with, and we will show how one can associate
to them a paradifferential operator. As pointed out in the introduction, passing from multiplication by functions to action
by operators is just the fundamental step which allows us to improve the result of Tarama.
A wide analysis of symbolic calculus in this new class will end the present section.

\subsection{Littlewood-Paley decomposition} \label{ss:L-P}

Let us first define the so called ``Littlewood-Paley decomposition'', based on a non-homogeneous dyadic partition of unity with
respect to the Fourier variable. We refer to \cite{B-C-D}, \cite{Bony} and \cite{M-2008}
for the details.

So, fix a smooth radial function
$\chi$ supported in the ball $B(0,2),$ 
equal to $1$ in a neighborhood of $B(0,1)$
and such that $r\mapsto\chi(r\,e)$ is nonincreasing
over $\R_+$ for all unitary vectors $e\in\R^N$. Set also
$\varphi\left(\xi\right)=\chi\left(\xi\right)-\chi\left(2\xi\right).$
\smallbreak
The dyadic blocks $(\Delta_j)_{j\in\Z}$
 are defined by\footnote{Throughout we agree  that  $f(D)$ stands for 
the pseudo-differential operator $u\mapsto\mc{F}^{-1}(f\,\mc{F}u)$.} 
$$
\Delta_j:=0\ \hbox{ if }\ j\leq-1,\quad\Delta_{0}:=\chi(D)\quad\hbox{and}\quad
\Delta_j:=\varphi(2^{-j}D)\ \text{ if }\  j\geq1.
$$
We  also introduce the following low frequency cut-off:
$$
S_ju:=\chi(2^{-j}D)=\sum_{k\leq j}\Delta_{k}\quad\text{for}\quad j\geq0.
$$
The following classical properties will be used freely throughout the paper:
\begin{itemize}
\item for any $u\in\mc{S}',$ the equality $u=\sum_{j}\Delta_ju$ holds true in $\mc{S}'$;
\item for all $u$ and $v$ in $\mc{S}'$,
the sequence $\left(S_{j-3}u\,\,\Delta_jv\right)_{j\in\N}$ is spectrally supported in dyadic annuli.
\end{itemize}

Let us also mention a fundamental result, which explains, by the so-called \emph{Bernstein's inequalities},
the way derivatives act on spectrally localized functions.
  \begin{lemma} \label{l:bern}
Let  $0<r<R$.   A
constant $C$ exists so that, for any nonnegative integer $k$, any couple $(p,q)$ 
in $[1,+\infty]^2$ with  $p\leq q$ 
and any function $u\in L^p$,  we  have, for all $\lambda>0$,
$$
\displaylines{
{\rm supp}\, \widehat u \subset   B(0,\lambda R)\quad
\Longrightarrow\quad
\|\nabla^k u\|_{L^q}\, \leq\,
 C^{k+1}\,\lambda^{k+N\left(\frac{1}{p}-\frac{1}{q}\right)}\,\|u\|_{L^p}\;;\cr
{\rm supp}\, \widehat u \subset \{\xi\in\R^N\,|\, r\lambda\leq|\xi|\leq R\lambda\}
\quad\Longrightarrow\quad C^{-k-1}\,\lambda^k\|u\|_{L^p}\,
\leq\,
\|\nabla^k u\|_{L^p}\,
\leq\,
C^{k+1} \, \lambda^k\|u\|_{L^p}\,.
}$$
\end{lemma}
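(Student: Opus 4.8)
The plan is to deduce all three estimates from Young's convolution inequality applied to kernels manufactured from \emph{fixed} ($\lambda$-independent) cut-off functions, the whole difficulty being concentrated in controlling the $L^1$-norms of those kernels by a constant growing at most geometrically in the order $k$. Throughout it suffices to argue with a single derivative $\d^\alpha$, $|\alpha|=k$, in place of $\nabla^k$: passing back to $\nabla^k u$ only costs a further factor $C(N)^k$, which is absorbed into the final $C^{k+1}$.

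\emph{First (ball) inequality.} Fix a smooth $\psi$ equal to $1$ on $B(0,R)$ and supported in $B(0,2R)$. Since $\Supp\what u\subset B(0,\lambda R)$ we have $\what{\d^\alpha u}(\xi)=(i\xi)^\alpha\psi(\xi/\lambda)\,\what u(\xi)$, hence $\d^\alpha u=g_{\alpha,\lambda}*u$ with $g_{\alpha,\lambda}(x)=\lambda^{N+k}g_\alpha(\lambda x)$ and $g_\alpha:=\mc{F}^{-1}\bigl((i\xi)^\alpha\psi(\xi)\bigr)$. Young's inequality with $\tfrac1m=1-\bigl(\tfrac1p-\tfrac1q\bigr)\in[0,1]$ gives
$$
\|\d^\alpha u\|_{L^q}\;\le\;\|g_{\alpha,\lambda}\|_{L^m}\,\|u\|_{L^p}\;=\;\lambda^{k+N\left(\frac1p-\frac1q\right)}\|g_\alpha\|_{L^m}\,\|u\|_{L^p}\,,
$$
so everything reduces to the bound $\|g_\alpha\|_{L^m}\le C^{k+1}$, uniformly in $\alpha$. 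Since $\|g_\alpha\|_{L^m}\le\|g_\alpha\|_{L^\infty}^{1-1/m}\|g_\alpha\|_{L^1}^{1/m}$ and $\|g_\alpha\|_{L^\infty}\le C(N)\,\|(i\xi)^\alpha\psi\|_{L^1}\le C^{k+1}$ (the support of $\psi$ being fixed), it is enough to prove $\|g_\alpha\|_{L^1}\le C^{k+1}$. For this I would use the identity $(1+|x|^2)^N g_\alpha(x)=\mc{F}^{-1}\bigl((\Id-\Delta_\xi)^N[(i\xi)^\alpha\psi]\bigr)$ together with the compact support of the right-hand member to get $\|g_\alpha\|_{L^1}\le C(N,R)\,\bigl\|(\Id-\Delta_\xi)^N[(i\xi)^\alpha\psi]\bigr\|_{L^\infty}$; expanding by Leibniz, a term in which $j\le2N$ derivatives hit $(i\xi)^\alpha$ produces a monomial of degree $k-j$ (bounded by $(1+2R)^k$ on $\Supp\psi$) times a combinatorial factor $\le k^{2N}$ times a bounded function, and summing over the $O_N(1)$ such terms and absorbing $k^{2N}\le C(N)^k$ yields the claim. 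The upper bound in the second inequality is exactly the case $p=q$ of this one, with $\psi$ replaced by a cut-off $\wtilde\vphi$ equal to $1$ on $\{r\le|\xi|\le R\}$ and supported in $\{r/2\le|\xi|\le2R\}$.

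\emph{Lower (annulus) inequality, i.e. reverse Bernstein.} Here one exploits that $|\xi|\neq0$ on $\Supp\what u\subset\{r\lambda\le|\xi|\le R\lambda\}$. Starting from the multinomial identity $|\xi|^{2k}=\sum_{|\alpha|=k}\tfrac{k!}{\alpha!}\xi^{2\alpha}=\sum_{|\alpha|=k}\tfrac{k!}{\alpha!}(i\xi)^\alpha\,\oline{(i\xi)^\alpha}$, and with $\wtilde\vphi$ as above, one obtains
$$
\what u(\xi)\;=\;\lambda^{-k}\sum_{|\alpha|=k}\frac{k!}{\alpha!}\,h_\alpha(\xi/\lambda)\,\what{\d^\alpha u}(\xi)\,,\qquad\mbox{where}\qquad h_\alpha(\xi):=\oline{(i\xi)^\alpha}\,\wtilde\vphi(\xi)\,|\xi|^{-2k}\,,
$$
hence $u=\lambda^{-k}\sum_{|\alpha|=k}\tfrac{k!}{\alpha!}\,\ell_{\alpha,\lambda}*\d^\alpha u$ with $\ell_{\alpha,\lambda}(x)=\lambda^N(\mc{F}^{-1}h_\alpha)(\lambda x)$. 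Young's inequality together with $\sum_{|\alpha|=k}\tfrac{k!}{\alpha!}=N^k$ then gives $\|u\|_{L^p}\le\lambda^{-k}N^k\bigl(\sup_{|\alpha|=k}\|\mc{F}^{-1}h_\alpha\|_{L^1}\bigr)\|\nabla^k u\|_{L^p}$. The bound $\|\mc{F}^{-1}h_\alpha\|_{L^1}\le C^{k+1}$ follows exactly as in the previous paragraph, via $(\Id-\Delta_\xi)^N$ and the compact support of $h_\alpha$; the only new feature is that on the annulus $\Supp\wtilde\vphi$ one has $|\xi|^{-2k}\le(r/2)^{-2k}\le C^k$ and each derivative of $|\xi|^{-2k}$ costs a factor $\le C(r,N)\,k$, all of which is again absorbed into a single $C^{k+1}$. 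Rearranging yields $C^{-k-1}\lambda^k\|u\|_{L^p}\le\|\nabla^k u\|_{L^p}$.

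\emph{Main obstacle.} The only genuinely delicate point of the whole argument is precisely this book-keeping in the Leibniz expansion of $(\Id-\Delta_\xi)^N$: one must check that the polynomial-in-$k$ combinatorial factors, together with the geometric factors $(1+2R)^k$ and $(r/2)^{-2k}$, all collapse into one constant of the form $C^{k+1}$ with $C=C(N,r,R,\psi,\wtilde\vphi)$. Everything else is a routine application of Young's inequality and of the scaling behaviour of the kernels.
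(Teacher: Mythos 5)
Your proof is correct and is essentially the standard argument (convolution with rescaled kernels built from fixed cut-offs, Young's inequality, and the $(\Id-\Delta_\xi)^N$ trick to control the $L^1$-norms of the kernels with the right geometric dependence on $k$), which is exactly the proof given in the reference [B-C-D] that the paper cites for this lemma; the paper itself offers no proof. All the scaling exponents, the multinomial identity for the reverse inequality, and the book-keeping of the combinatorial factors $k^{2N}\le C(N)^k$ check out.
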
   

Let us recall the characterization of (classical) Sobolev spaces via dyadic decomposition:
for all $s\in\mbb{R}$ there exists a constant $C_s>0$ such that
\begin{equation} \label{est:dyad-Sob}
 \frac{1}{C_s}\,\,\sum^{+\infty}_{\nu=0}2^{2\nu s}\,\|u_\nu\|^2_{L^2}\;\leq\;\|u\|^2_{H^s}\;\leq\;
C_s\,\,\sum^{+\infty}_{\nu=0}2^{2\nu s}\,\|u_\nu\|^2_{L^2}\,,
\end{equation}
where we have set $u_\nu:=\Delta_\nu u$.

So, the $H^s$ norm of a tempered distribution is the same as the $\ell^2$ norm of the sequence
$\left(2^{s\nu}\,\left\|\Delta_\nu u\right\|_{L^2}\right)_{\nu\in\N}$. Now, one may ask what we get if, in the sequence, we put
weights different to the exponential term $2^{s\nu}$. Before answering this question, we introduce some definitions. For the
details of the presentiation, we refer also to \cite{C-M}.

Let us set $\Pi(D)\,:=\,\log(2+|D|)$, i.e. its symbol is $\pi(\xi)\,:=\,\log(2+|\xi|)$.
\begin{defin} \label{d:log-H^s}
 For all $\alpha\in\R$, we define the space $H^{s+\alpha\log}$ as the space $\Pi^{-\alpha}H^s$, i.e.
$$
f\,\in\,H^{s+\alpha\log}\quad\Longleftrightarrow\quad\Pi^\alpha f\,\in\,H^s\quad\Longleftrightarrow\quad
\pi^\alpha(\xi)\left(1+|\xi|^2\right)^{s/2}\what{f}(\xi)\,\in\,L^2\,.
$$
\end{defin}

From the definition, it's obvious that the following inclusions hold for $s_1>s_2$, $\alpha_1\geq\alpha_2\geq0$:
$$
H^{s_1+\alpha_1\log}\;\hra\;H^{s_1+\alpha_2\log}\;\hra\;H^{s_1}\;\hra\;
H^{s_1-\alpha_2\log}\;\hra\;H^{s_1-\alpha_1\log}\;\hra\;H^{s_2}\,.
$$

We have the following dyadic characterization of these spaces (see \cite[Prop. 4.1.11]{M-2008}).
\begin{prop} \label{p:log-H}
 Let $s$, $\alpha\,\in\R$. A tempered distribution $u$ belongs to the space $H^{s+\alpha\log}$ if and only if:
\begin{itemize}
 \item[(i)] for all $k\in\N$, $\Delta_ku\in L^2(\R^N)$;
\item[(ii)] set $\,\delta_k\,:=\,2^{ks}\,(1+k)^\alpha\,\|\Delta_ku\|_{L^2}$ for all $k\in\N$, the sequence
$\left(\delta_k\right)_k$ belongs to $\ell^2(\N)$.
\end{itemize}
Moreover, $\|u\|_{H^{s+\alpha\log}}\,\sim\,\left\|\left(\delta_k\right)_k\right\|_{\ell^2}$.
\end{prop}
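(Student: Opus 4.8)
\textbf{Proof plan for Proposition \ref{p:log-H}.}
The plan is to reduce the claim to the already-known dyadic characterization of the classical Sobolev spaces, recorded in \eqref{est:dyad-Sob}, by comparing the Fourier multiplier $\pi^\alpha(\xi)=\log^\alpha(2+|\xi|)$ with the step function $\nu\mapsto(1+\nu)^\alpha$ on each dyadic annulus. First I would observe that, by Definition \ref{d:log-H^s}, a tempered distribution $u$ lies in $H^{s+\alpha\log}$ if and only if $v:=\Pi^\alpha u\in H^s$; and since $\pi$ is a positive symbol bounded below by $\log 2>0$, the operator $\Pi^\alpha$ is invertible on $\mc{S}'$ with inverse $\Pi^{-\alpha}$, so $u\in\mc{S}'$ is unconstrained by this. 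The operator $\Pi^\alpha$ commutes with every $\Delta_k$ (both are Fourier multipliers), hence $\Delta_k v=\Pi^\alpha\Delta_k u$, and in particular $\Delta_k u\in L^2$ for all $k$ if and only if $\Delta_k v\in L^2$ for all $k$; this already yields the equivalence of condition (i) with $v\in\mc S'$ having $L^2$ dyadic blocks.

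The heart of the argument is the two-sided comparison
$$
\frac{1}{C_\alpha}\,(1+\nu)^\alpha\;\leq\;\pi^\alpha(\xi)\;=\;\bigl(\log(2+|\xi|)\bigr)^\alpha\;\leq\;C_\alpha\,(1+\nu)^\alpha
\qquad\text{for all }\xi\in\operatorname{supp}\vphi(2^{-\nu}\cdot),\ \nu\geq1,
$$
with an analogous trivial bound on $\operatorname{supp}\chi$ for $\nu=0$. Indeed, on the support of $\Delta_\nu$ one has $|\xi|\sim 2^\nu$, so $\log(2+|\xi|)\sim\nu\sim 1+\nu$ for $\nu\geq1$ (and $\log(2+|\xi|)$ is bounded above and below by absolute constants on $\operatorname{supp}\chi$), and raising to the power $\alpha$ preserves such two-sided estimates up to a constant $C_\alpha$ depending only on $\alpha$ (for $\alpha<0$ one divides the inequalities, which simply flips which constant goes where). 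Since $\Delta_\nu v=\Pi^\alpha\Delta_\nu u=\pi^\alpha(D)\Delta_\nu u$ and $\Delta_\nu u$ is spectrally supported where the above comparison holds, Plancherel's theorem gives
$$
\frac{1}{C_\alpha}\,(1+\nu)^\alpha\,\|\Delta_\nu u\|_{L^2}\;\leq\;\|\Delta_\nu v\|_{L^2}\;\leq\;C_\alpha\,(1+\nu)^\alpha\,\|\Delta_\nu u\|_{L^2}
$$
for every $\nu\in\N$. Multiplying by $2^{\nu s}$ shows that the sequence $\delta_\nu=2^{\nu s}(1+\nu)^\alpha\|\Delta_\nu u\|_{L^2}$ is equivalent, entrywise up to the constant $C_\alpha$, to the sequence $2^{\nu s}\|\Delta_\nu v\|_{L^2}$; hence one belongs to $\ell^2(\N)$ exactly when the other does, and the two $\ell^2$ norms are comparable.

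To conclude, I would combine this with \eqref{est:dyad-Sob} applied to $v$: condition (ii) (i.e. $(\delta_\nu)_\nu\in\ell^2$) is equivalent to $\bigl(2^{\nu s}\|\Delta_\nu v\|_{L^2}\bigr)_\nu\in\ell^2$, which by \eqref{est:dyad-Sob} is equivalent to $v\in H^s$, i.e. to $u\in H^{s+\alpha\log}$; and along the way the norm equivalence $\|u\|_{H^{s+\alpha\log}}=\|v\|_{H^s}\sim\|(2^{\nu s}\|\Delta_\nu v\|_{L^2})_\nu\|_{\ell^2}\sim\|(\delta_\nu)_\nu\|_{\ell^2}$ drops out. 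I do not anticipate a serious obstacle here; the only point requiring a little care is the elementary estimate $\log(2+|\xi|)\sim 1+\nu$ on $\operatorname{supp}\Delta_\nu$ uniformly in $\nu$, and the handling of the sign of $\alpha$ when inverting the inequalities — both routine. Since this is exactly the statement cited from \cite[Prop.~4.1.11]{M-2008}, an alternative would be simply to invoke that reference, but the self-contained argument above is short enough to include.
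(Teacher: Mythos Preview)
Your argument is correct. Note that the paper does not actually prove this proposition: it simply cites \cite[Prop.~4.1.11]{M-2008} and moves on. Your self-contained proof --- reducing to the classical dyadic characterization \eqref{est:dyad-Sob} via the elementary two-sided bound $\log(2+|\xi|)\sim 1+\nu$ on $\operatorname{supp}\Delta_\nu$ --- is exactly the natural one, and you have already anticipated at the end of your proposal that invoking the reference would suffice.
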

Hence, this proposition generalizes property \eqref{est:dyad-Sob}.

Even if energy estimate \eqref{est:thesis} involves no loss of derivatives, in our analysis we will need 
this new spaces, which are intermediate between the classical ones.
As a matter of fact, action of operators associated to Zygmund symbols ``often'' entails a logarithmic loss of derivatives.
We will formally justify in a while what we have just said; first of all, let us recall some properties
of Zygmund continuous functions.

\subsection{Zygmund continuous functions} \label{ss:Zyg}

We have already introduced the space $Z(\R^N)$ in definition \ref{d:zyg}. Let us now analyse some of its
properties.

Let us recall that this class of functions coincides (see e.g. \cite{Ch1995} for the proof) with the Besov space
$\mc{C}^1_*\equiv B^1_{\infty,\infty}$, which is characterized by the condition
\begin{equation} \label{est:Z}
\sup_{\nu\geq0}\left(\,2^\nu\,\left\|\Delta_\nu f\right\|_{L^\infty}\right)\,<\,+\infty\,.
\end{equation}

Moreover, we have (see e.g. \cite[Ch. 2]{B-C-D} for the proof) the continuous embedding $Z\hra LL$, where we denote with $LL$
the space of log-Lipschitz functions. As a matter of fact, for all $f\in Z$ there
exists a constant $C>0$ such that, for any $0<|y|<1$,
\begin{equation} \label{est:Z->LL}
 \sup_{x\in\R^N}\,\left|f(x+y)\,-\,f(x)\right|\,\leq\,C\,|y|\,\log\left(1+\g+\frac{1}{|y|}\right)\,,
\end{equation}
where $\g\geq1$ is a fixed real parameter.

\begin{rem} \label{r:gamma}
 Let us point out that the classical result gives us inequality \eqref{est:Z->LL} with $\g=1$; by monotonicity of the
logarithmic function, however, we could write it for any $\g\geq1$. In what follows, we will make a broad use of paradifferential
calculus with parameters (see subsection \ref{ss:pd_param}), which will come into
play in a crucial way in our computations. So, we prefer performing immediately such a change.
\end{rem}

Now, given a $f\in Z$, we can regularize it by convolution. As, in the sequel, we are interested in smoothing out coefficients of
our hyperbolic operator only with respect to the time variable, let us immediately focus on the $1$-dimensional case.

So, fix a $f\in Z(\R)$. Take an
even function $\rho\in\mc{C}^\infty_0(\mbb{R})$, $0\leq\rho\leq1$, whose support is contained in the interval $[-1,1]$ and
such that $\int\rho(t)\,dt=1$, and define the mollifier kernel
$$
\rho_\veps(t)\,:=\,\frac{1}{\veps}\,\,\rho\!\left(\frac{t}{\veps}\right)\qquad\qquad\forall\,\veps\in\,]0,1]\,.
$$
Then, for all $\veps\in\,]0,1]$ we set
\begin{equation} \label{eq:f_e}
f_\veps(t)\,:=\,\left(\rho_\veps\,*\,f\right)(t)\,=\,\int_{\mbb{R}_s}\rho_{\veps}(t-s)\,f(s)\,ds\,.
\end{equation}

Let us state some properties about the family of functions we obtain in this way. The most important one is that we can't expect
to control the first derivative uniformly on $\veps$: our starting function is not Lipschitz. Nevertheless, second
derivative behaves well again.
\begin{prop} \label{p:Z-approx}
 Let $f$ be a Zygmund continuous function such that $0\,<\,\lambda_0\,\leq\,f\,\leq\Lambda_0$, for some positive
real numbers $\lambda_0$ and $\Lambda_0$.

Then there exists a constant $C>0$, depending only on the
Zygmund seminorm of $f$, i.e. $|f|_Z$, such that the following facts hold true for all $\veps\in\,]0,1]$:
\begin{eqnarray}
 0\;\;<\;\;\lambda_0 & \leq & f_\veps\;\;\leq\;\;\Lambda_0 \label{est:ell} \\[1ex]
\left|f_\veps(t)\,-\,f(t)\right| & \leq & C\,\,\veps \label{est:f_e-f} \\
\left|\d_tf_\veps(t)\right| & \leq & C\,\log\left(1+\g+\frac{1}{\veps}\right) \label{est:d_t-f} \\
\left|\d^2_tf_\veps(t)\right| & \leq & C\,\,\frac{1}{\veps}\,. \label{est:d_tt-f}
\end{eqnarray}
\end{prop}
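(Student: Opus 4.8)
The plan is to prove the four estimates by using only two facts about $f$: that it is bounded between $\lambda_0$ and $\Lambda_0$, and that its Zygmund seminorm $|f|_Z$ is finite. The boundedness of $f_\veps$, i.e. estimate \eqref{est:ell}, is immediate: since $\rho_\veps\geq0$ has total mass $1$, the convolution $f_\veps(t)$ is a weighted average of the values $f(s)$ with $s\in[t-\veps,t+\veps]$, hence it inherits the same upper and lower bounds. For the other three estimates the key trick, already present in the work of Colombini--De Giorgi--Spagnolo and of Tarama, is to exploit the parity of $\rho$ in order to symmetrize the integrals and make second differences of $f$ appear.

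\medbreak
More precisely, first I would rewrite $f_\veps(t)-f(t)=\int\rho_\veps(t-s)\,(f(s)-f(t))\,ds$ and, after the change of variable $s=t-\veps\sigma$, use that $\rho$ is even to replace the integrand by its symmetric average in $\sigma$: this turns $f(t-\veps\sigma)-f(t)$ into $\tfrac12\bigl(f(t-\veps\sigma)+f(t+\veps\sigma)-2f(t)\bigr)$, whose absolute value is bounded by $\tfrac12|f|_Z\,\veps|\sigma|\leq\tfrac12|f|_Z\,\veps$ on the support of $\rho$. Integrating against $\rho$ gives \eqref{est:f_e-f} with $C$ proportional to $|f|_Z$. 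For \eqref{est:d_t-f} I would differentiate under the integral sign, $\d_t f_\veps(t)=\int\rho_\veps'(t-s)\,f(s)\,ds=\int\rho_\veps'(t-s)\,(f(s)-f(t))\,ds$ (legitimate since $\int\rho_\veps'=0$), and then estimate $|f(s)-f(t)|$ by the log-Lipschitz bound \eqref{est:Z->LL}, which is valid precisely because $Z\hra LL$; since $|t-s|\leq\veps\leq1$ on the support, $|f(s)-f(t)|\leq C\,\veps\log(1+\g+1/\veps)$, while $\int|\rho_\veps'(t-s)|\,ds=\veps^{-1}\int|\rho'|$, and the two factors combine to give \eqref{est:d_t-f}. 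Finally, for \eqref{est:d_tt-f} I would write $\d_t^2 f_\veps(t)=\int\rho_\veps''(t-s)\,f(s)\,ds$ and again symmetrize using the evenness of $\rho$ (hence of $\rho''$): with the substitution $s=t-\veps\sigma$ one gets $\veps^{-2}\int\rho''(\sigma)\,f(t-\veps\sigma)\,d\sigma=\tfrac12\veps^{-2}\int\rho''(\sigma)\bigl(f(t-\veps\sigma)+f(t+\veps\sigma)-2f(t)\bigr)\,d\sigma$, where I have also used $\int\rho''=\int\sigma\rho''\,d\sigma=0$ to subtract off $2f(t)$ for free. The second difference is $O(|f|_Z\,\veps|\sigma|)$, so one power of $\veps$ is gained back and $|\d_t^2 f_\veps(t)|\leq C\,|f|_Z\,\veps^{-1}$, which is \eqref{est:d_tt-f}.

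\medbreak
I expect the only delicate point to be the bookkeeping in \eqref{est:d_tt-f}: one must be careful that the two ``free'' cancellations ($\int\rho''=0$ and $\int\sigma\rho''\,d\sigma=0$, the latter being $-\int\rho'=0$) are exactly what is needed to replace $f(t\mp\veps\sigma)$ by second differences rather than by first differences, since a first-difference bound would only give the log-Lipschitz rate $\veps^{-1}\log(1/\veps)$ and lose the claimed clean $\veps^{-1}$. Everything else is a routine combination of the averaging property of mollifiers, the parity of $\rho$, and the two structural facts about Zygmund functions recalled in Subsection~\ref{ss:Zyg}, namely the defining second-difference bound and the embedding $Z\hra LL$ via \eqref{est:Z->LL}.
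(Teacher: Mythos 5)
Your proposal is correct and follows essentially the same route as the paper: evenness of $\rho$ (resp.\ $\rho''$) plus $\int\rho=1$ (resp.\ $\int\rho''=0$) to symmetrize and produce second differences for \eqref{est:f_e-f} and \eqref{est:d_tt-f}, and the embedding $Z\hra LL$ together with $\int\rho'=0$ and the monotonicity of $\sigma\mapsto\sigma\log(1+\g+1/\sigma)$ for \eqref{est:d_t-f}. The only cosmetic remark is that the extra cancellation $\int\sigma\rho''(\sigma)\,d\sigma=0$ you invoke is not actually needed once you symmetrize by parity; $\int\rho''=0$ alone suffices to subtract $2f(t)$.
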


\begin{proof}
 \eqref{est:ell} is obvious. Using the fact that $\rho$ is even and has unitary integral, we can write
$$
f_\veps(t)\,-\,f(t)\,=\,\frac{1}{2\,\veps}\,\int\rho\!\left(\frac{s}{\veps}\right)\left(f(t+s)\,+\,f(t-s)\,-\,2f(t)\right)\,ds\,,
$$
and inequality \eqref{est:f_e-f} immediately follows. For \eqref{est:d_tt-f} we can argue in the same way, recalling that
$\rho''$ is even and that $\int\rho''=0$.

We have to pay attention to the estimate of the first derivative. As $\int\rho'\equiv0$, one has
$$
\d_tf_\veps(t)\,=\,\frac{1}{\veps}\,\int_{|s|\leq\veps}\rho'\left(\frac{s}{\veps}\right)\left(f(t-s)-f(t)\right)ds\,.
$$
Keeping in mind \eqref{est:Z->LL} and noticing that the function $\sigma\mapsto\sigma\log(1+\g+1/\sigma)$ is increasing,
we get inequality \eqref{est:d_t-f}. The proposition is now completely proved.
\end{proof}

\subsection{Paradifferential calculus with parameters} \label{ss:pd_param}

Let us present here the paradifferential calculus depending on some parameter $\g$. One can find a complete
and detailed treatement in \cite{M-Z} (see also \cite{M-1986}).

Fix $\gamma\geq1$ and take a cut-off function $\psi\in\mc{C}^\infty(\R^N\times\R^N)$ which verifies the following properties:
\begin{itemize}
 \item there exist $0<\veps_1<\veps_2<1$ such that
$$
\psi(\eta,\xi)\,=\,\left\{\begin{array}{lcl}
                           1 & \mbox{for} & |\eta|\leq\veps_1\left(\gamma+|\xi|\right) \\ [1ex]
			   0 & \mbox{for} & |\eta|\geq\veps_2\left(\gamma+|\xi|\right)\,;
                          \end{array}
\right.
$$
\item for all $(\beta,\alpha)\in\N^N\times\N^N$, there exists a constant $C_{\beta,\alpha}$ such that
$$
\left|\d^\beta_\eta\d^\alpha_\xi\psi(\eta,\xi)\right|\,\leq\,C_{\beta,\alpha}\left(\gamma+|\xi|\right)^{-|\alpha|-|\beta|}\,.
$$
\end{itemize}
We will call such a function an ``admissible cut-off''.

For instance, if $\gamma=1$, one can take
$$
\psi(\eta,\xi)\,\equiv\,\psi_{-3}(\eta,\xi)\,:=\,\sum_{k=0}^{+\infty}\chi_{k-3}(\eta)\,\vphi_k(\xi)\,,
$$
where $\chi$ and $\vphi$ are the localization (in phase space) functions associated to a Littlewood-Paley decomposition,
see \cite[Ex. 5.1.5]{M-2008}.
Similarly, if $\gamma>1$ it is possible to find a suitable integer $\mu\geq0$ such that
\begin{equation} \label{pd_eq:pp_symb}
\psi_\mu(\eta,\xi)\,:=\,\chi_{\mu}(\eta)\,\chi_{\mu+2}(\xi)\,+\,
\sum_{k=\mu+3}^{+\infty}\chi_{k-3}(\eta)\,\vphi_k(\xi)
\end{equation}
is an admissible cut-off function.

\begin{rem} \label{r:gamma-dyad}
 Let us immediately point out that we can also define a dyadic decomposition depending on the parameter $\g$. First of all, we set
\begin{equation} \label{def:Lambda}
\Lambda(\xi,\g)\,:=\,\left(\g^2\,+\,|\xi|^2\right)^{1/2}\,.
\end{equation}
Then, taken the usual smooth function $\chi$ associated to a  Littlewood-Paley decomposition, we define
$$
\chi_\nu(\xi,\g)\,:=\,\chi\left(2^{-\nu}\Lambda(\xi,\g)\right)\,,\quad
S^\g_\nu\,:=\,\chi_\nu(D_x,\g)\,,\quad
\Delta^\g_\nu\,:=\,S^\g_{\nu+1}-S^\g_\nu\,.
$$
The usual properties of the support of the localization functions still hold, and
for all fixed $\g\geq1$ and all tempered distributions $u$, we have
$$
u\,=\,\sum_{\nu=0}^{+\infty}\,\Delta^\g_\nu\,u\qquad\mbox{in }\;\;\mc{S}'\,.
$$
Moreover, we can introduce logarithmic Besov spaces using the new localization operators $S^\g_\nu$, $\Delta^\g_\nu$.
For the details see section 2.1 of \cite{M-Z}. What is important to retain is that,
once we fix $\g\geq1$, the previous construction is
equivalent to the classical one, and one can still recover previous results. \\
For instance, if we define the space $H^{s+\alpha\log}_\gamma$ as the set of tempered distributions for which
\begin{equation} \label{eq:gH-def}
\left\|u\right\|^2_{H^{s+\alpha\log}_\g}\,:=\,\int_{\R^N_\xi}\Lambda^{2s}(\xi,\g)\,\log^{2\alpha}(1+\g+|\xi|)\,
\left|\what{u}(\xi)\right|^2\,d\xi\;\;<\;+\infty\,,
\end{equation}
for every fixed $\g\geq1$ it coincides with $H^{s+\alpha\log}$, the respective norms are equivalent and the
characterization given by proposition \ref{p:log-H} still holds true.
\end{rem}

Let us come back to the admissible cut-off function $\psi$ introduced above. Thanks to it, we can define more general
paradifferential operators, associated to low regularity functions: let us explain how.

Define the function $G^\psi$ as the inverse Fourier transform of $\psi$ with respect to the variable $\eta$:
$$
G^\psi(x,\xi)\,:=\,\left(\mc{F}^{-1}_\eta\psi\right)(x,\xi)\,.
$$
The following properties hold true.
\begin{lemma} \label{l:G}
 For all $(\beta,\alpha)\in\N^N\times\N^N$,
\begin{eqnarray}
 \left\|\d^\beta_x\d^\alpha_\xi G^\psi(\cdot,\xi)\right\|_{L^1(\R^N_x)} & \leq &
C_{\beta,\alpha}\left(\gamma+|\xi|\right)^{-|\alpha|+|\beta|}\,, \label{pd_est:G_1} \\
\left\||\cdot|\log\left(2+\frac{1}{|\cdot|}\right)\,\d^\beta_x\d^\alpha_\xi G^\psi(\cdot,\xi)\right\|_{L^1(\R^N_x)} & \leq &
C_{\beta,\alpha}\left(\gamma+|\xi|\right)^{-|\alpha|+|\beta|-1}\,\log(1+\gamma+|\xi|). \label{pd_est:G_2}
\end{eqnarray}
\end{lemma}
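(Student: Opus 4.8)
The plan is to deduce both estimates from the symbol bounds on $\psi$ by a rescaling argument in the $\eta$-variable, which is the standard trick in paradifferential calculus with parameters. First I would fix $\xi$ and set $\lambda := \gamma + |\xi|$, so that the admissible cut-off hypotheses say precisely that $\eta\mapsto\psi(\eta,\xi)$ is supported in $\{|\eta|\leq\veps_2\lambda\}$ and satisfies $|\d_\eta^\beta\d_\xi^\alpha\psi(\eta,\xi)|\leq C_{\beta,\alpha}\,\lambda^{-|\alpha|-|\beta|}$. Introduce the rescaled function $\Psi_\xi(\zeta):=\psi(\lambda\zeta,\xi)$; then $\Psi_\xi$ is supported in a fixed ball $\{|\zeta|\leq\veps_2\}$ and each derivative $\d_\zeta^\beta\d_\xi^\alpha\Psi_\xi$ is bounded by $C_{\beta,\alpha}\lambda^{-|\alpha|}$ uniformly in $\xi$ and $\lambda\geq 1$. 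Consequently $\d_\zeta^\beta\d_\xi^\alpha\Psi_\xi$ lies in a bounded subset of $\mc{S}(\R^N)$ (bounded ball support plus all derivatives controlled), so its inverse Fourier transform lies in a bounded subset of $\mc{S}(\R^N)$ as well; in particular it is integrable against any fixed polynomial or any fixed slowly growing weight, with bounds uniform in $\xi$ and in $\lambda$.

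Next I would unwind the rescaling. By definition $G^\psi(x,\xi)=(\mc{F}^{-1}_\eta\psi)(x,\xi)$, and the scaling law for the Fourier transform gives $\d_x^\beta\d_\xi^\alpha G^\psi(x,\xi)=\lambda^{N+|\beta|}\,(\mc{F}^{-1}_\zeta[(i\cdot)^\beta\d_\xi^\alpha\Psi_\xi])(\lambda x)$, up to harmless constant factors coming from differentiating under the transform; here the factor $\lambda^{|\beta|}$ records that each $\d_x$ pulls down a power of the frequency scale. Taking the $L^1(\R^N_x)$ norm and changing variables $y=\lambda x$ absorbs the $\lambda^N$, leaving $\|\d_x^\beta\d_\xi^\alpha G^\psi(\cdot,\xi)\|_{L^1}\leq C_{\beta,\alpha}\,\lambda^{|\beta|-|\alpha|}$, because the remaining $L^1$ norm of the inverse transform of $(i\cdot)^\beta\d_\xi^\alpha\Psi_\xi$ is bounded uniformly by the Schwartz estimates of the previous step. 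This is exactly \eqref{pd_est:G_1}. For \eqref{pd_est:G_2} the only change is that the weight $|x|\log(2+1/|x|)$ becomes, after $y=\lambda x$, the weight $\lambda^{-1}|y|\log(2+\lambda/|y|)$; since $\log(2+\lambda/|y|)\leq\log(1+\lambda)+\log(2+1/|y|)$ for $\lambda\geq 1$, and the inverse transform of the relevant Schwartz function decays faster than any power, the integral $\int |y|\log(2+1/|y|)\,|\cdots|\,dy$ is bounded uniformly while the extra $\log(1+\lambda)=\log(1+\gamma+|\xi|)$ factor is pulled out. Combined with the gained $\lambda^{-1}$, this yields the claimed bound $C_{\beta,\alpha}\,\lambda^{|\beta|-|\alpha|-1}\log(1+\gamma+|\xi|)$.

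The main thing to be careful about — rather than a deep obstacle — is bookkeeping the dependence on $\xi$ when differentiating the rescaled object, since $\lambda=\gamma+|\xi|$ itself depends on $\xi$: differentiating $\Psi_\xi(\zeta)=\psi(\lambda\zeta,\xi)$ in $\xi$ produces both the explicit $\d_\xi\psi$ term and a term $\zeta\cdot\nabla_\eta\psi\,\cdot\d_\xi\lambda$; one checks that $|\d_\xi\lambda|\leq 1$, that $|\zeta|\leq\veps_2$ on the support, and that $|\nabla_\eta\psi(\lambda\zeta,\xi)|\leq C\lambda^{-1}$, so this extra term is of the same order $\lambda^{-1}$ as a genuine $\xi$-derivative and does not spoil the homogeneity count. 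Iterating, every $\d_\xi$ costs exactly one power of $\lambda^{-1}$, uniformly, which is what the estimates record. The weight $\log(2+1/|x|)$ is integrable near the origin against the smooth, bounded inverse transform and is dominated by a power of $|x|$ at infinity, so it introduces no convergence issue; all constants depend only on $(\beta,\alpha)$ and on the admissible cut-off, not on $\gamma$.
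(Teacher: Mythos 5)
Your argument is correct and is the standard rescaling proof; the paper itself does not prove this lemma but simply refers to \cite[Lemma 5.1.7]{M-2008}, whose proof proceeds in essentially the same way (reduce to a family of functions uniformly bounded in $\mc{C}^\infty_c$ of a fixed ball, whose inverse Fourier transforms are uniformly Schwartz, then track the powers of $\lambda=\gamma+|\xi|$ through the change of variables, with the extra $\log(1+\gamma+|\xi|)$ in \eqref{pd_est:G_2} coming exactly from your splitting $\log(2+\lambda/|y|)\leq\log(1+\lambda)+\log(2+1/|y|)$). One cosmetic point to fix in the bookkeeping of the $\xi$-derivatives: $\gamma+|\xi|$ is not differentiable at $\xi=0$, so you should either rescale by the smooth, comparable weight $\Lambda(\xi,\gamma)=(\gamma^2+|\xi|^2)^{1/2}$ (for which $|\d_\xi\Lambda|\leq1$ still holds), or first apply all the $\d_\xi^\alpha$ to $\psi$ directly — using the admissible cut-off bounds, $\d_\xi^\alpha\psi(\cdot,\xi)$ is supported in $|\eta|\leq\veps_2(\gamma+|\xi|)$ with $\eta$-derivatives of order $\beta'$ bounded by $C(\gamma+|\xi|)^{-|\alpha|-|\beta'|}$ — and only then rescale in $\eta$.
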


\begin{proof}
See \cite[Lemma 5.1.7]{M-2008}. 
\end{proof}

Thanks to $G$, we can smooth out a symbol $a$ in the $x$ variable and then define the paradifferential operator
associated to $a$ as the classical pseudodifferential operator associated to this smooth function.

First of all, let us define the new class of symbols we are dealing with.
\begin{defin} \label{d:symbols}
Let $m$ and $\delta$ be two given real numbers.
\begin{itemize}
 \item[(i)] We denote with $\mc{Z}^{(m,\delta)}$ the space of functions $a(t,x,\xi,\g)$ which are locally
bounded over $[0,T_0]\times\R^N\times\R^N\times[1,+\infty[\,$ and of class $\mc{C}^\infty$ with respect to $\xi$, and which
satisfy the following properties:
\begin{itemize}
\item for all $\alpha\in\N^N$, there exists a $C_\alpha>0$ such that, for all $(t,x,\xi,\g)$,
\begin{equation} \label{est:g-symb}
 \left|\d^\alpha_\xi a(t,x,\xi,\g)\right|\;\leq\;C_\alpha\,(\g+|\xi|)^{m-|\alpha|}\,\log^\delta(1+\g+|\xi|)\,;
\end{equation}
\item there exists a constant $K>0$ such that, for any $\tau\geq0$ and $y\in\R^N$, one has,
for all $\xi\in\R^N$ and $\g\in[1,+\infty[\,$,
\begin{eqnarray}
& & \sup_{(t,x)}\biggl|a(t+\tau,x+y,\xi,\g)+a(t-\tau,x-y,\xi,\g)-2a(t,x,\xi,\g)\biggr|\;\leq \label{est:Z_symb} \\
& & \qquad\qquad\qquad\qquad\qquad\qquad\qquad
\leq\;K\,\bigl(\tau+|y|\bigr)\,\left(\g+|\xi|\right)^m\,\log^\delta\left(1+\g+|\xi|\right)\,. \nonumber
\end{eqnarray}
\end{itemize}
\item[(ii)] $\Sigma^{(m,\delta)}$ is the space of symbols $\sigma$ of $\mc{Z}^{(m,\delta)}$ for which
there exists a $\,0<\epsilon<1\,$ such that, for all $(t,\xi,\g)\in[0,T]\times\R^N\times[1,+\infty[\,$,
the spectrum (i.e. the support of the Fourier transform with respect to $x$) of the function $\,x\,\mapsto\,\sigma(t,x,\xi,\g)$
is contained in the ball $\left\{|\eta|\,\leq\,\epsilon\,(\g+|\xi|)\right\}$.
\end{itemize}
\end{defin}

In a quite natural way, we can equip $\mc{Z}^{(m,\delta)}$ with the seminorms
\begin{eqnarray} 
 |a|_{(m,\delta,k)} & := & \sup_{|\alpha|\leq k}\,\sup_{\R^N_\xi\times[1,+\infty[}
\left((\g+|\xi|)^{-m+|\alpha|}\,\log^{-\delta}(1+\g+|\xi|)
\left\|\d^\alpha_\xi a(\cdot,\cdot,\xi,\g)\right\|_{L^\infty_{(t,x)}}\right), \label{eq:seminorms} \\[1ex]
\bigl|a\bigr|_{\mc{Z}} & := & \inf\biggl\{K>0\,\biggl|\;\mbox{relation \eqref{est:Z_symb} holds true}\biggr\}\,. \label{eq:Z_sem}
\end{eqnarray}

Moreover, by spectral localization and Paley-Wiener Theorem, a symbol $\sigma\in\Sigma^{(m,\delta)}$ is smooth also in the
$x$ variable. So, we can define the subspaces $\Sigma^{(m,\delta)}_{(\mu,\varrho)}$ (for $\mu$ and $\varrho\,\in\R$)
of symbols $\sigma$ which verify \eqref{est:g-symb} and also, for  all $\beta>0$,
\begin{equation} \label{est:g-s_x}
 \left\|\d^\beta_x\d^\alpha_\xi \sigma(\cdot,\cdot,\xi,\g)\right\|_{L^\infty_{(t,x)}}\;\leq\;C_{\beta,\alpha}\,
(\g+|\xi|)^{m-|\alpha|+|\beta|+\mu}\,\log^{\delta+\varrho}(1+\g+|\xi|)\,.
\end{equation}

Now, given a symbol $a\in\mc{Z}^{(m,\delta)}$, we can define
\begin{equation} \label{eq:symb-def}
\sigma^\psi_a(t,x,\xi,\g)\,:=\,\left(\,\psi(D_x,\xi)\,a\,\right)(t,x,\xi,\g)\,=\,
\left(G^\psi(\cdot,\xi)\,*_x\,a(t,\cdot,\xi,\g)\right)(x)\,.
\end{equation}
\begin{prop} \label{p:par-op}
\begin{itemize}
\item[(i)] For all $m$, $\delta\,\in\R$, the smoothing operator
$$
\mc{R}:\;a(t,x,\xi,\g)\,\mapsto\,\sigma^\psi_a(t,x,\xi,\g)
$$
is bounded from $\mc{Z}^{(m,\delta)}$ to $\Sigma^{(m,\delta)}$.
\item[(ii)] The difference $a\,-\,\sigma^\psi_a\,\in\,\mc{Z}^{(m-1,\delta+1)}$.
\item[(iii)] In particular, if $\psi_1$ and $\psi_2$ are two admissible cut-off functions, then the difference of the two
smoothed symbols, $\sigma^{\psi_1}_a\,-\,\sigma^{\psi_2}_a$, belongs to $\Sigma^{(m-1,\delta+1)}$.
\end{itemize}
\end{prop}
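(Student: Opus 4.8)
The plan is to reduce everything to estimates on the kernel $G^\psi$ provided by Lemma \ref{l:G}, combined with the Zygmund-type bound \eqref{est:Z_symb} on $a$. For part (i), the statement that $\sigma^\psi_a$ satisfies the symbol estimates \eqref{est:g-symb} is a routine consequence of differentiating under the convolution sign in \eqref{eq:symb-def}: writing $\partial_\xi^\alpha \sigma^\psi_a = \sum_{\alpha_1+\alpha_2=\alpha}\binom{\alpha}{\alpha_1}(\partial_\xi^{\alpha_1}G^\psi)(\cdot,\xi)*_x(\partial_\xi^{\alpha_2}a)(t,\cdot,\xi,\g)$, one applies Young's inequality $\|f*g\|_{L^\infty}\le\|f\|_{L^1}\|g\|_{L^\infty}$, then bounds the $G^\psi$-factor by \eqref{pd_est:G_1} and the $a$-factor by \eqref{est:g-symb}; the powers of $(\g+|\xi|)$ combine correctly and the $\log^\delta$ factor is untouched. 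For the Zygmund bound \eqref{est:Z_symb} for $\sigma^\psi_a$, one uses that convolution in $x$ commutes with the second finite difference in $(t,x)$, so that the second difference of $\sigma^\psi_a$ equals $G^\psi(\cdot,\xi)*_x\bigl(a(t+\tau,\cdot+y,\xi,\g)+a(t-\tau,\cdot-y,\xi,\g)-2a(t,\cdot,\xi,\g)\bigr)$ up to a translation, and Young's inequality with \eqref{pd_est:G_1} (for $\beta=0$) together with \eqref{est:Z_symb} gives exactly the required bound with the same exponents. The spectral localization of $\sigma^\psi_a$ inside $\{|\eta|\le\veps_2(\g+|\xi|)\}$ is immediate from \eqref{eq:symb-def}, since $\widehat{\sigma^\psi_a}(\eta,\xi)=\psi(\eta,\xi)\,\widehat{a}(\eta,\xi)$ in the $x$-variable and $\psi$ is supported there; this is what places $\sigma^\psi_a$ in $\Sigma^{(m,\delta)}$.

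For part (ii), the key identity is $a-\sigma^\psi_a = \bigl(\delta_0 - G^\psi(\cdot,\xi)\bigr)*_x a(t,\cdot,\xi,\g)$, whose Fourier transform in $x$ is $(1-\psi(\eta,\xi))\widehat{a}(\eta,\xi)$; since $1-\psi$ vanishes for $|\eta|\le\veps_1(\g+|\xi|)$, this is a "high-frequency in $x$" object. One writes $a(t,x)-a(t,x-z) $ and exploits the fact that $\int G^\psi(z,\xi)\,dz = \psi(0,\xi)=1$, so that
\begin{equation*}
a(t,x,\xi,\g)-\sigma^\psi_a(t,x,\xi,\g)=\int_{\R^N} G^\psi(z,\xi)\bigl(a(t,x,\xi,\g)-a(t,x-z,\xi,\g)\bigr)\,dz.
\end{equation*}
The inner first difference is not directly controlled, but the Zygmund hypothesis \eqref{est:Z_symb} controls \emph{second} differences; to pass from Zygmund to log-Lipschitz one argues exactly as in \eqref{est:Z->LL} (telescoping a dyadic sum of second differences), obtaining that $|a(t,x,\xi,\g)-a(t,x-z,\xi,\g)|$ is bounded by $C|z|\log(2+1/|z|)\,(\g+|\xi|)^m\log^\delta(1+\g+|\xi|)$ plus a harmless term, uniformly in $(t,x)$. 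Inserting this and using the weighted kernel estimate \eqref{pd_est:G_2} — precisely the bound on $\||\cdot|\log(2+1/|\cdot|)\,G^\psi(\cdot,\xi)\|_{L^1}$ — yields the gain of one power of $(\g+|\xi|)$ at the cost of one extra $\log$: that is, $a-\sigma^\psi_a$ satisfies \eqref{est:g-symb} with $(m-1,\delta+1)$. The $\xi$-derivative estimates for $a-\sigma^\psi_a$ follow the same scheme applied to $\partial_\xi^\alpha$ of the integrand, using \eqref{pd_est:G_2} with $\alpha\ne 0$; one must check that differentiating $\psi$ in $\xi$ does not spoil the mean-value normalization, but $\int\partial_\xi^\alpha G^\psi(z,\xi)\,dz=\partial_\xi^\alpha\psi(0,\xi)=0$ for $|\alpha|\ge1$, which if anything helps. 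The Zygmund bound for $a-\sigma^\psi_a$ at level $(m-1,\delta+1)$ follows from that for $a$ at level $(m,\delta)$ combined with part (i) applied to $a$ and the already-established membership. Part (iii) is then immediate: $\sigma^{\psi_1}_a-\sigma^{\psi_2}_a=(a-\sigma^{\psi_2}_a)-(a-\sigma^{\psi_1}_a)\in\mc{Z}^{(m-1,\delta+1)}$ by (ii), while its spectrum is contained in $\{|\eta|\le\veps_2(\g+|\xi|)\}$ by the same argument as in (i), so it lies in $\Sigma^{(m-1,\delta+1)}$.

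The main obstacle is the estimate in part (ii): one cannot directly invoke a Lipschitz-type control on $a$ in $x$ — only the second-difference (Zygmund) bound is available — so the passage from \eqref{est:Z_symb} to a usable pointwise modulus of continuity for $a(t,x,\xi,\g)-a(t,x-z,\xi,\g)$ must be done carefully, and it is exactly this step that produces the logarithmic loss, i.e. the $\delta\mapsto\delta+1$ in the index. Everything else is bookkeeping with Young's inequality and the two kernel bounds of Lemma \ref{l:G}.
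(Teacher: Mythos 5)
Your overall architecture --- reducing everything to the two kernel bounds of Lemma \ref{l:G} plus the Zygmund hypothesis \eqref{est:Z_symb}, with the mean-value normalization $\int G^\psi(z,\xi)\,dz=\psi(0,\xi)=1$ and the Zygmund-to-log-Lipschitz passage \eqref{est:Z->LL} doing the work in part (ii) --- is the standard one (the paper itself states the proposition without proof, importing it from the parametrized calculus of M\'etivier--Zumbrun), and your parts (i) and (iii) are essentially complete. But part (ii) has two genuine gaps. First, the $\xi$-derivative estimates: for $|\alpha|\geq1$ you must bound terms of the form $\int\d^{\alpha_1}_\xi G^\psi(z,\xi)\,\bigl(\d^{\alpha_2}_\xi a(t,x-z,\xi,\g)-\d^{\alpha_2}_\xi a(t,x,\xi,\g)\bigr)\,dz$ with $\alpha_2\neq0$, and to gain the crucial factor $(\g+|\xi|)^{-1}\log(1+\g+|\xi|)$ you need a log-Lipschitz modulus of continuity \emph{in $x$ for $\d^{\alpha_2}_\xi a$}, at order $(\g+|\xi|)^{m-|\alpha_2|}\log^\delta(1+\g+|\xi|)$. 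Definition \ref{d:symbols} imposes the Zygmund condition \eqref{est:Z_symb} only on $a$ itself, not on its $\xi$-derivatives, so "the same scheme applied to $\d^\alpha_\xi$ of the integrand" does not go through as written; the vanishing of $\int\d^\alpha_\xi G^\psi\,dz$ is necessary but not sufficient --- without a modulus of continuity for the convolved function the crude Young bound only gives order $(m-|\alpha|,\delta)$, with no gain. (The symbols actually used in the paper do have Zygmund $\xi$-derivatives at the right orders, and that is the hypothesis you should add, or extract from the intended reading of the definition, to close this step.)

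Second, your justification of the second-difference bound \eqref{est:Z_symb} for $a-\sigma^\psi_a$ at level $(m-1,\delta+1)$ is not a proof. Combining the Zygmund bound for $a$ with the one for $\sigma^\psi_a$ from part (i) only yields a second-difference bound at level $(m,\delta)$, i.e. $K(\tau+|y|)(\g+|\xi|)^m\log^\delta(1+\g+|\xi|)$, which is weaker than what membership in $\mc{Z}^{(m-1,\delta+1)}$ demands; and interpolating this with the sup-norm bound $C(\g+|\xi|)^{m-1}\log^{\delta+1}(1+\g+|\xi|)$ you established does \emph{not} produce $K'(\tau+|y|)(\g+|\xi|)^{m-1}\log^{\delta+1}(1+\g+|\xi|)$ uniformly in $(\tau,y)$ (check the regime $\tau+|y|\sim(\g+|\xi|)^{-1}$). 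A correct treatment must work directly on the representation $a-\sigma^\psi_a=\int G^\psi(z,\xi)\bigl(a(t,x,\xi,\g)-a(t,x-z,\xi,\g)\bigr)dz$ and control the resulting mixed differences, or else content itself with the weaker (and, for the operator-norm applications, sufficient) statement that the difference satisfies \eqref{est:g-symb} with indices $(m-1,\delta+1)$. As written, this step is asserted rather than proved.
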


\begin{rem} \label{r:psi-ind}
 As we will see in a while, part (ii) of previous proposition says that the difference between the original symbol
and the classical one associated to it is more regular. Part (iii), instead, infers that the whole construction
is independent of the cut-off function fixed at the beginning.
\end{rem}

\subsubsection{General paradifferential operators}

As already mentioned, we can now define the paradifferential operator associated to $a$ using the classical symbol
corresponding to it:
\begin{equation} \label{eq:T-def}
T^\psi_au (t,x)\;:=\;\left(\sigma^\psi_a(t,\,\cdot\,,D_x,\g)\,u\right)(x)\;=\;\frac{1}{(2\pi)^N}\int_{\R^N_\xi}e^{ix\cdot\xi}\,
\sigma^\psi_a(t,x,\xi,\g)\,\what{u}(\xi)\,d\xi\,.
\end{equation}
Note that $T^\psi_au$ still depends on the parameter $\g\geq1$.

For instance, if $a=a(x)\in L^\infty$ and if we take the cut-off function $\psi_{-3}$, then $T^\psi_a$ is actually the usual
paraproduct operator. If we take $\psi_\mu$ as defined in \eqref{pd_eq:pp_symb}, instead, we get a paraproduct operator
which starts from high enough frequencies, which will be indicated with $T^\mu_a$ (see section 3.3 of \cite{C-M}).

Let us now study the action of general paradifferential operators on the class of logarithmic Sobolev spaces.
First of all, a definition is in order.
\begin{defin} \label{d:op_order}
 We say that an operator $P$ is of order $\,m+\delta\log\,$ if, for every $(s,\alpha)\in\R^2$ and every $\g\geq1$,
$P$ maps $H^{s+\alpha\log}_\g$ into $H^{(s-m)+(\alpha-\delta)\log}_\g$ continuously.
\end{defin}

With slight modifications to the proof of Proposition 2.9 of \cite{M-Z}, stated for the classical Sobolev class,
we get the next fundamental result.
\begin{lemma} \label{l:action}
 For all $\sigma\in\Sigma^{(m,\delta)}$, the corresponding operator $\sigma(\,\cdot\,,D_x)$ is of order $\,m+\delta\log$.
\end{lemma}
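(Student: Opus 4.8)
\textbf{Proof plan for Lemma \ref{l:action}.}

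The plan is to reduce the statement to the corresponding result for the classical Sobolev scale (Proposition 2.9 of \cite{M-Z}), by dyadically decomposing the action of $\sigma(\,\cdot\,,D_x)$ and tracking how the logarithmic weights propagate. First I would fix $\g\geq1$ and use the $\g$-dependent dyadic decomposition $(\Delta^\g_\nu)_\nu$ from Remark \ref{r:gamma-dyad}. For $u\in H^{s+\alpha\log}_\g$, write $u=\sum_\nu\Delta^\g_\nu u$ and study $\sigma(\,\cdot\,,D_x)\Delta^\g_\nu u$. Because $\sigma\in\Sigma^{(m,\delta)}$, the function $x\mapsto\sigma(t,x,\xi,\g)$ has spectrum contained in a ball of radius $\epsilon(\g+|\xi|)$ with $\epsilon<1$; hence, by the standard spectral-support bookkeeping (the sum of a function spectrally localized in a ball of radius $\epsilon\lambda$ and one localized in an annulus of size $\lambda$ is supported in an annulus), the piece $\sigma(\,\cdot\,,D_x)\Delta^\g_\nu u$ is spectrally localized in a fixed dilate of the $\nu$-th $\g$-annulus. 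This is exactly the mechanism that makes $T^\psi_a$ a paradifferential (as opposed to merely pseudodifferential) operator, and it is what allows one to sum the dyadic pieces in $\ell^2$.

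Next I would estimate $\|\Delta^\g_k\bigl(\sigma(\,\cdot\,,D_x)\Delta^\g_\nu u\bigr)\|_{L^2}$. Using the symbol bound \eqref{est:g-symb} together with \eqref{est:g-s_x}-type control in $x$ (which every $\sigma\in\Sigma^{(m,\delta)}$ enjoys, being smooth in $x$ by Paley--Wiener), one gets, for $|k-\nu|\leq C$ and $0$ otherwise, a bound of the shape
\begin{equation} \label{eq:dyadic-sigma-bound}
\bigl\|\Delta^\g_k\bigl(\sigma(\,\cdot\,,D_x)\Delta^\g_\nu u\bigr)\bigr\|_{L^2}\;\leq\;C\,2^{\nu m}\,(1+\nu)^\delta\,\|\Delta^\g_\nu u\|_{L^2}\,,
\end{equation}
where the factor $2^{\nu m}(1+\nu)^\delta$ is precisely $(\g+|\xi|)^m\log^\delta(1+\g+|\xi|)$ evaluated on the $\nu$-th $\g$-annulus. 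Multiplying by $2^{k(s-m)}(1+k)^{\alpha-\delta}$, summing over $k$, and using $|k-\nu|\leq C$ to absorb the mismatch between $k$ and $\nu$ in the weights (both $2^{\cdot}$ and $(1+\cdot)$ factors change only by bounded multiplicative constants), I obtain
\begin{equation} \label{eq:weighted-sigma-bound}
2^{k(s-m)}(1+k)^{\alpha-\delta}\,\bigl\|\Delta^\g_k\bigl(\sigma(\,\cdot\,,D_x)u\bigr)\bigr\|_{L^2}\;\leq\;C\sum_{|\nu-k|\leq C}2^{\nu s}(1+\nu)^{\alpha}\,\|\Delta^\g_\nu u\|_{L^2}\,.
\end{equation}
Taking the $\ell^2(\N_k)$ norm of both sides, Young's inequality for convolution with the fixed finitely supported sequence on the right, together with the logarithmic-Sobolev characterization of Proposition \ref{p:log-H} (valid in the $\g$-dependent form by Remark \ref{r:gamma-dyad}), yields $\|\sigma(\,\cdot\,,D_x)u\|_{H^{(s-m)+(\alpha-\delta)\log}_\g}\leq C\,\|u\|_{H^{s+\alpha\log}_\g}$, which is the claim.

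The main obstacle is purely bookkeeping: one must check that the passage from the integer-weight estimates underlying Proposition 2.9 of \cite{M-Z} to the logarithmic weights $(1+\nu)^\delta$ is consistent, i.e. that the extra $\log^\delta$ factor in \eqref{est:g-symb}--\eqref{est:Z_symb} translates exactly into a shift of the logarithmic index by $\delta$ and nothing worse. This is where "slight modifications to the proof" is doing its work: in the cited proof the symbol estimate carries a factor $(\g+|\xi|)^m$ which becomes $2^{\nu m}$ on the $\nu$-th annulus; here it carries the extra $\log^\delta(1+\g+|\xi|)$, which on the $\nu$-th annulus is comparable to $(1+\nu)^\delta$, and the only thing to verify is that this factor commutes harmlessly with the summation over the (finitely many) relevant annuli, exactly as the power $2^{\nu m}$ does. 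Since the $\g$-localization pins $k$ and $\nu$ to within a bounded distance, $(1+\nu)^\delta\sim(1+k)^\delta$ and the argument closes. No genuinely new estimate is needed beyond those already in \cite{M-Z} and the dyadic characterization recalled above.
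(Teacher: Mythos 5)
Your argument is correct and is essentially the proof the paper has in mind: the paper's ``proof'' consists of invoking Proposition 2.9 of \cite{M-Z} with slight modifications, and that proof is precisely the dyadic-block/spectral-localization scheme you describe, the only modification being the bookkeeping of the $(1+\nu)^\delta$ weights that you carry out. One small remark: your single-block $L^2$ bound needs only the $\xi$-derivative estimates \eqref{est:g-symb} (via integration by parts in the kernel and Schur's test), not the $x$-regularity \eqref{est:g-s_x}, which plays no role here since the spectral localization of the output is already guaranteed by the support condition defining $\Sigma^{(m,\delta)}$.
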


%
%

Lemma \ref{l:action} immediately implies the following theorem, which describes the action of the new class of
paradifferential operators.
\begin{thm} \label{t:action}
Given a symbol $a\in\mc{Z}^{(m,\delta)}$, for any admissible cut-off function $\psi$, the operator
$T^\psi_a$ is of order $m+\delta\log$.
\end{thm}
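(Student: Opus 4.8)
The plan is to reduce everything to Lemma \ref{l:action}, which already handles symbols in the spectrally localized class $\Sigma^{(m,\delta)}$, together with part (ii) of Proposition \ref{p:par-op}. First I would recall that, by definition \eqref{eq:symb-def}, one has $T^\psi_a u = \sigma^\psi_a(t,\cdot,D_x,\g)u$, where $\sigma^\psi_a = \psi(D_x,\xi)a$. By part (i) of Proposition \ref{p:par-op}, the smoothing operator $\mc{R}$ maps $\mc{Z}^{(m,\delta)}$ into $\Sigma^{(m,\delta)}$, so $\sigma^\psi_a\in\Sigma^{(m,\delta)}$. Then Lemma \ref{l:action} applies verbatim to $\sigma^\psi_a$ and tells us that the pseudodifferential operator $\sigma^\psi_a(\cdot,D_x)=T^\psi_a$ is of order $m+\delta\log$, i.e. it maps $H^{s+\alpha\log}_\g$ continuously into $H^{(s-m)+(\alpha-\delta)\log}_\g$ for every $(s,\alpha)\in\R^2$ and every $\g\geq1$. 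This is essentially the whole argument.

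The only point requiring a word of care is uniformity. Definition \ref{d:op_order} asks for continuity of the map $H^{s+\alpha\log}_\g\to H^{(s-m)+(\alpha-\delta)\log}_\g$ for each $\g$, but in the energy estimates one ultimately needs the operator norm to be bounded \emph{independently of} $\g$ (and controlled by finitely many of the seminorms $|a|_{(m,\delta,k)}$). So I would go back through the proof of Lemma \ref{l:action} — which is the adaptation of Proposition 2.9 of \cite{M-Z} to the logarithmic scale — and check that the constant there depends only on $m$, $\delta$, $s$, $\alpha$, $N$ and on a finite number of the seminorms defining $\Sigma^{(m,\delta)}$; combined with the fact that $\mc{R}$ in Proposition \ref{p:par-op}(i) is bounded with constants depending only on $m$, $\delta$ and the cut-off $\psi$ (via Lemma \ref{l:G}), this yields the bound uniform in $\g\geq1$. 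The independence of the particular admissible cut-off $\psi$ is then exactly the content of Proposition \ref{p:par-op}(iii): changing $\psi$ modifies $T^\psi_a$ by an operator whose symbol lies in $\Sigma^{(m-1,\delta+1)}$, hence of order $(m-1)+(\delta+1)\log$, which is lower order and in particular still of order $m+\delta\log$.

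The main (and really the only) obstacle is thus not a new computation but the bookkeeping in the background lemma: one must ensure Lemma \ref{l:action} is stated and proved with constants tracked through the seminorm scale, since Theorem \ref{t:action} will later be invoked with symbols depending on the approximation parameter $\veps$ and the parameter $\g$, and the energy argument breaks down without such uniformity. Once this is in place, the proof is a two-line composition: $a\mapsto\sigma^\psi_a\in\Sigma^{(m,\delta)}$ by Proposition \ref{p:par-op}(i), then $\sigma^\psi_a(\cdot,D_x)$ of order $m+\delta\log$ by Lemma \ref{l:action}.
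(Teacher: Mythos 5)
Your argument is exactly the paper's: the theorem is stated as an immediate consequence of Lemma \ref{l:action} applied to $\sigma^\psi_a\in\Sigma^{(m,\delta)}$, which is where Proposition \ref{p:par-op}(i) places it. Your additional remarks on tracking constants through the seminorms and on cut-off independence via Proposition \ref{p:par-op}(iii) are sensible bookkeeping but not a different route.
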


As already remarked, the construction does not depends on the cut-off function $\psi$ used at the beginning. Next result
says that main features of a paradifferential operator depend only on its symbol.
\begin{prop} \label{p:act-psi}
 If $\psi_1$ and $\psi_2$ are two admissible cut-off functions and $a\in\mc{Z}^{(m,\delta)}$,
then the difference $\,T^{\psi_1}_a\,-\,T^{\psi_2}_a\,$ is of order $(m-1)+(\delta+1)\log$.
\end{prop}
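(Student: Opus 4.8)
The plan is to reduce the statement about the difference $T^{\psi_1}_a - T^{\psi_2}_a$ to the corresponding statement about the difference of the smoothed symbols, which is already furnished by Proposition \ref{p:par-op}(iii), and then invoke Lemma \ref{l:action}. Concretely, unwinding the definition \eqref{eq:T-def}, for $i=1,2$ the operator $T^{\psi_i}_a$ is the pseudodifferential operator with symbol $\sigma^{\psi_i}_a = \psi_i(D_x,\xi)\,a \in \Sigma^{(m,\delta)}$. Since the map $a\mapsto \sigma^{\psi_i}_a u$ is linear in the symbol and the pseudodifferential quantization $\sigma\mapsto\sigma(\cdot,D_x)$ is linear in $\sigma$, we have
\begin{equation*}
T^{\psi_1}_a\,-\,T^{\psi_2}_a\;=\;\bigl(\sigma^{\psi_1}_a-\sigma^{\psi_2}_a\bigr)(\,\cdot\,,D_x,\g)\,.
\end{equation*}

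Next I would apply Proposition \ref{p:par-op}(iii): the difference of smoothed symbols $\sigma^{\psi_1}_a-\sigma^{\psi_2}_a$ belongs to $\Sigma^{(m-1,\delta+1)}$. This is the key structural input, and it is exactly the gain-of-one-derivative (up to a logarithmic loss) phenomenon encoded in part (ii) of the same proposition, applied to each $\psi_i$ separately and then subtracted so that the principal symbols cancel. Having placed the symbol difference in $\Sigma^{(m-1,\delta+1)}$, Lemma \ref{l:action} tells us directly that the associated operator is of order $(m-1)+(\delta+1)\log$, which is the assertion. So the proof is essentially a two-line chain: linearity of the quantization, then Proposition \ref{p:par-op}(iii), then Lemma \ref{l:action}.

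The only point requiring a word of care — and the closest thing here to an obstacle — is making sure the symbol class $\Sigma^{(m,\delta)}$ and its quantization are genuinely linear in the relevant sense, i.e. that $(\sigma_1-\sigma_2)(\cdot,D_x) = \sigma_1(\cdot,D_x) - \sigma_2(\cdot,D_x)$ as operators on the $H^{s+\alpha\log}_\g$ scale, which is immediate from the integral formula \eqref{eq:T-def} defining the quantization, and that Proposition \ref{p:par-op}(iii) is stated for an arbitrary symbol $a\in\mc{Z}^{(m,\delta)}$ (it is). One should also note that Theorem \ref{t:action} itself is not strong enough for the conclusion: it only gives that each $T^{\psi_i}_a$ is of order $m+\delta\log$, whose formal difference would merely be of order $m+\delta\log$ again; the improvement to $(m-1)+(\delta+1)\log$ genuinely uses the symbol-level cancellation from Proposition \ref{p:par-op}(iii) rather than a mere subtraction of mapping properties. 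With that observed, no computation is left to do.
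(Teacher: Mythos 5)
Your proposal is correct and is exactly the argument the paper intends: the proposition is a direct consequence of Proposition \ref{p:par-op}(iii) (the symbol difference $\sigma^{\psi_1}_a-\sigma^{\psi_2}_a$ lies in $\Sigma^{(m-1,\delta+1)}$) combined with Lemma \ref{l:action}, via linearity of the quantization. Your observation that Theorem \ref{t:action} alone would not suffice and that the gain comes from the symbol-level cancellation is also the right point to flag.
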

Therefore, changing the cut-off function $\psi$ doesn't change the paradifferential operator associated to $a$,
up to lower order terms. So, in what follows we will miss out the dependence of $\sigma_a$ and $T_a$ on $\psi$.

\subsubsection{Symbolic calculus in the Zygmund class $\mc{Z}^{(m,\delta)}$}

For convenience, in what follows we will temporarily consider $\delta=0$: the general case $\delta\neq0$
easily follows with slight modifications.

So, let us now take a Zygmund symbol $a\in\mc{Z}^{(m,0)}$ (for some $m\in\R$). Assume moreover that it satisfies a strictly
ellipticity  condition: there exists a constant $\lambda_0>0$ such that, for all $(t,x,\xi,\g)$,
$$
a(t,x,\xi,\g)\,\geq\,\lambda_0\left(\g+|\xi|\right)^m\,.
$$
Finally, let us smooth $a$ out with respect to the first variable, as we have seen in paragraph \ref{ss:Zyg},
and let us denote by $a_\veps$ the result of the convolution.
Obviously, also the $a_\veps$'s satisfy the ellipticity condition with the same $\lambda_0$
(by relation \eqref{est:ell}), so in particular independent of $\veps$. In addition, next estimates hold true.
\begin{lemma} \label{l:symb}
 The classical symbol associated to $a_\veps$, which we will denote by $\sigma_a$ (we drop the dependence on $\veps$ to simplify
notations), satisfy the following inequalities:
\begin{eqnarray*}
 \left|\d^\alpha_\xi \sigma_a\right| & \leq & C\,\left(\g+|\xi|\right)^{m-|\alpha|} \\
\left|\d^\beta_x\d^\alpha_\xi \sigma_a\right| & \leq & C\,\left(\g+|\xi|\right)^{m-|\alpha|+|\beta|-1}\,
\log\left(1+\g+|\xi|\right) \qquad\mbox{if }\;|\beta|=1 \\
\left|\d^\beta_x\d^\alpha_\xi \sigma_a\right| & \leq & C\,\left(\g+|\xi|\right)^{m-|\alpha|+|\beta|-1}
\qquad\qquad\qquad\qquad \mbox{if }\;|\beta|\geq2\,.
\end{eqnarray*}
Moreover, the classical symbol associated to $\d_ta_\veps$ coincides with $\d_t\sigma_a$ and verifies, instead,
\begin{eqnarray*}
\left|\d^\alpha_\xi\sigma_{\d_ta}\right| & \leq & C\,\left(\g+\xi\right)^{m-|\alpha|}\,
\log\left(1+\g+\frac{1}{\veps}\right) \\
\left|\d^\beta_x\d^\alpha_\xi\sigma_{\d_ta}\right| & \leq & C\left(\g+|\xi|\right)^{m-|\alpha|+|\beta|}\,+\,
\frac{C}{\veps}\,\left(\g+|\xi|\right)^{m-|\alpha|+|\beta|-1}\,.
\end{eqnarray*}
Finally, $\sigma_{\d^2_ta}\,\equiv\,\d^2_t\sigma_a$ and one has
\begin{eqnarray*}
\left|\d^\alpha_\xi\sigma_{\d^2_ta}\right| & \leq & \frac{C}{\veps}\,\left(\g+|\xi|\right)^{m-|\alpha|} \\
\left|\d^\beta_x\d^\alpha_\xi\sigma_{\d^2_ta}\right| & \leq & \frac{C}{\veps}\,
\left(\g+|\xi|\right)^{m-|\alpha|+|\beta|}\,.
\end{eqnarray*}
\end{lemma}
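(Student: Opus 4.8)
The plan is to start from the formula $\sigma_a=G^\psi(\cdot,\xi)\ast_x a_\veps(t,\cdot,\xi,\g)$ of \eqref{eq:symb-def}, noting at once that, since $G^\psi$ does not depend on $t$, the operator $\d_t$ commutes with this convolution; this already yields the two identities $\sigma_{\d_t a}=\d_t\sigma_a$ and $\sigma_{\d^2_t a}=\d^2_t\sigma_a$ asserted in the statement. Two preliminary ingredients should be prepared. (a) Since $\int\rho_\veps=1$, the mollified symbol $a_\veps$ still satisfies \eqref{est:g-symb} and \eqref{est:Z_symb} (with $\delta=0$) \emph{uniformly} in $\veps\in\,]0,1]$, and, by an interpolation in $\xi$ based on the bounds \eqref{est:g-symb} of every order, so does each $\d^\alpha_\xi a$ (with $m-|\alpha|$ in place of $m$); in particular $\d^\alpha_\xi a(\cdot,x,\xi,\g)$ is log-Lipschitz in $t$ and Zygmund in $x$ with seminorm $\lesssim(\g+|\xi|)^{m-|\alpha|}$. (b) Applying Proposition~\ref{p:Z-approx} with $(\xi,\g)$ frozen, one gets, uniformly, $\|\d^\alpha_\xi\d_t a_\veps\|_{L^\infty}\lesssim(\g+|\xi|)^{m-|\alpha|}\log(1+\g+1/\veps)$ and $\|\d^\alpha_\xi\d^2_t a_\veps\|_{L^\infty}\lesssim\veps^{-1}(\g+|\xi|)^{m-|\alpha|}$, and, for the dyadic blocks in $x$, $\|\Delta_\mu\d_t a_\veps\|_{L^\infty}\lesssim(\g+|\xi|)^m\min\!\bigl(\log(1+\g+1/\veps),\,2^{-\mu}\veps^{-1}\bigr)$ and $\|\Delta_\mu\d^2_t a_\veps\|_{L^\infty}\lesssim(\g+|\xi|)^m\min\!\bigl(\veps^{-1},\,2^{-\mu}\veps^{-2}\bigr)$ (use that $a$, and hence each $\d^\alpha_\xi a$, is Zygmund in $t$ by \eqref{est:Z_symb} with $y=0$, and that $\rho_\veps'$, $\rho_\veps''$ are even with vanishing integral).

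The first inequality is then immediate: $a_\veps\in\mc{Z}^{(m,0)}$ uniformly, so $\sigma_a\in\Sigma^{(m,0)}$ uniformly by Proposition~\ref{p:par-op}(i), i.e. $|\d^\alpha_\xi\sigma_a|\lesssim(\g+|\xi|)^{m-|\alpha|}$; likewise the $\xi$‑only bounds for $\sigma_{\d_t a}$ and $\sigma_{\d^2_t a}$ follow by Leibniz-expanding $\d^\alpha_\xi\bigl(G^\psi\ast_x\cdot\bigr)$ and pairing $\|\d^{\alpha'}_\xi G^\psi(\cdot,\xi)\|_{L^1}\lesssim(\g+|\xi|)^{-|\alpha'|}$, i.e. \eqref{pd_est:G_1}, with the $L^\infty$ bounds of ingredient (b). For the estimates containing $x$-derivatives I would instead decompose $a_\veps(t,\cdot,\xi,\g)=\sum_{\mu\ge0}\Delta_\mu a_\veps$ dyadically in $x$: because $\widehat{G^\psi}(\cdot,\xi)=\psi(\cdot,\xi)$ is supported in $\{|\eta|\lesssim\g+|\xi|\}$, only the blocks with $2^\mu\lesssim\g+|\xi|$ survive the convolution with $\d^\beta_x\d^\alpha_\xi G^\psi$, and for each of them I would estimate $\bigl(\d^\beta_x\d^\alpha_\xi G^\psi\bigr)\ast_x\Delta_\mu a_\veps$ by Bernstein's inequalities (Lemma~\ref{l:bern}) — which furnish the factor $2^{\mu|\beta|}$ coming from the $x$-derivatives — together with $\|\d^\alpha_\xi G^\psi(\cdot,\xi)\|_{L^1}\lesssim(\g+|\xi|)^{-|\alpha|}$ and the Zygmund-in-$x$ bound $\|\Delta_\mu a_\veps\|_{L^\infty}\lesssim2^{-\mu}(\g+|\xi|)^m$. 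The geometric series $\sum_{2^\mu\lesssim\g+|\xi|}2^{\mu(|\beta|-1)}$ then sums to $\lesssim(\g+|\xi|)^{|\beta|-1}$ when $|\beta|\ge2$ (the top scale $2^\mu\sim\g+|\xi|$ dominating) and to $\lesssim\log(1+\g+|\xi|)$ when $|\beta|=1$ (all $\sim\log(\g+|\xi|)$ scales contributing equally) — exactly the dichotomy in the statement.

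Running the same scheme with $a_\veps$ replaced by $\d_t a_\veps$ (resp.\ $\d^2_t a_\veps$) and inserting the block estimates of ingredient (b) yields the remaining inequalities: for $\d^2_t a_\veps$ the crude bound $\|\Delta_\mu\d^2_t a_\veps\|_{L^\infty}\lesssim\veps^{-1}(\g+|\xi|)^m$ already gives $\veps^{-1}(\g+|\xi|)^{m-|\alpha|+|\beta|}$; for $\d_t a_\veps$ one splits the $\mu$-sum at the crossover $2^\mu\sim\bigl(\veps\log(1+\g+1/\veps)\bigr)^{-1}$, and the elementary inequality $\log x\le x$ shows that the residual logarithmic losses are absorbed into the two displayed terms $(\g+|\xi|)^{m-|\alpha|+|\beta|}+\veps^{-1}(\g+|\xi|)^{m-|\alpha|+|\beta|-1}$. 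The mixed terms of the Leibniz expansion — those in which some $\xi$-derivatives fall on the symbol rather than on $G^\psi$ — are treated identically, using ingredient (a) (and noting that $\d^{\alpha'}_\xi G^\psi$ with $|\alpha'|\ge1$ is spectrally supported in $\{|\eta|\gtrsim\g+|\xi|\}$, so that only the block $2^\mu\sim\g+|\xi|$ enters there).

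The step I expect to be the real obstacle is obtaining the \emph{sharp} outputs: a full extra derivative \emph{with no logarithmic loss} when $|\beta|\ge2$, and the correct $\veps$-weights (one power of $\veps^{-1}$, not two) for the $\d_t$- and $\d^2_t$-derivatives. A naive argument pairing the log-Lipschitz modulus of $a_\veps$ in $x$ with the weighted kernel estimate \eqref{pd_est:G_2} always costs a factor $\log(1+\g+|\xi|)$; avoiding it is precisely what forces the dyadic dissection in $x$ above, where Bernstein's inequality at the top frequency $2^\mu\sim\g+|\xi|$ supplies the missing power cleanly, and where the quantitative Zygmund (second-difference) information on $a_\veps$, $\d_t a_\veps$, $\d^2_t a_\veps$ keeps the $\veps$-weights under control across the crossover. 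A secondary, more technical point is the inheritance in ingredient (a): one must verify that $\d^\alpha_\xi a$ genuinely retains the time- and space-regularity of $a$ with the expected gain $(\g+|\xi|)^{-|\alpha|}$ — this is where the symbol bounds \eqref{est:g-symb} of all orders are used, through an interpolation in $\xi$.
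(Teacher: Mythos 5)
Your handling of the pure $\xi$-derivatives, of the $x$-derivatives of $\sigma_a$ (where the dyadic dissection plus Bernstein reproduces the $|\beta|=1$ versus $|\beta|\geq2$ dichotomy) and of $\sigma_{\d^2_ta}$ is sound and essentially equivalent to the paper's argument, which instead uses the moment conditions $\int\d^\beta_xG^\psi\,dy=0$ and the parity of $G^\psi$ to reduce to first and second differences of $a$ in $x$. The genuine gap is in the mixed estimate for $\sigma_{\d_ta}$ with $|\beta|=1$ --- precisely the point you flag as the obstacle, and your proposed crossover argument does not close it. Your scheme first converts the time regularity into the block bounds $\|\Delta_\mu\d_ta_\veps\|_{L^\infty}\lesssim(\g+|\xi|)^m\min\bigl(\log(1+\g+1/\veps),2^{-\mu}\veps^{-1}\bigr)$ and then sums $2^\mu$ times this over $2^\mu\lesssim\g+|\xi|$. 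Above the crossover every dyadic scale contributes the full $\veps^{-1}(\g+|\xi|)^m$, so the sum is $\veps^{-1}(\g+|\xi|)^m$ multiplied by the \emph{number} of scales between $(\veps\log(1+\g+1/\veps))^{-1}$ and $\g+|\xi|$; this is of size $\log(\g+|\xi|)$ in general (and still $\log\log(\g+|\xi|)$ after the choice $\veps=(\g+|\xi|)^{-1}$), not $O(1)$. No choice of splitting point removes this: from the two marginal pieces of information (the sup bound on $\d_ta_\veps$ and the Zygmund-in-$x$ bound on each block) the target $\veps^{-1}(\g+|\xi|)^{m-|\alpha|+|\beta|-1}$ is not reachable, and the residual logarithm is fatal downstream, since the cancellations of Section 4 need $\d_x\d_t(\alpha^{-1/4})$ to have order exactly $1/2$ and not $1/2+\log$.

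What your decoupled ``mollify in $t$, then decompose in $x$'' scheme cannot see is the \emph{joint} Zygmund hypothesis \eqref{est:Z_symb} along diagonal directions of $(t,x)$, which is strictly stronger than the two marginal conditions and is exactly what the paper uses. Since $\rho'$ is odd in $s$ and $\d_iG^\psi(\cdot,\xi)$ is odd in $y$, the product kernel is invariant under the simultaneous reflection $(s,y)\mapsto(-s,-y)$ and has vanishing total integral, whence
$$
\d_i\sigma_{\d_ta}\,=\,\frac{1}{2}\,\frac{1}{\veps^{2}}\int\!\!\int\d_iG^\psi(y,\xi)\,\rho'\!\left(\frac{s}{\veps}\right)
\Bigl(a(t+s,x+y,\xi)+a(t-s,x-y,\xi)-2a(t,x,\xi)\Bigr)\,ds\,dy\,,
$$
and \eqref{est:Z_symb} bounds the bracket by $K(|s|+|y|)(\g+|\xi|)^m$. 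Integrating the weight $|s|$ against $\veps^{-2}|\rho'(s/\veps)|$ costs $O(1)$ while $\int|\d_iG^\psi|\lesssim\g+|\xi|$, and integrating the weight $|y|$ against $|\d_iG^\psi|$ costs $O(1)$ while $\veps^{-2}\int|\rho'(s/\veps)|\,ds\lesssim\veps^{-1}$: this yields exactly the two terms of the statement with no logarithm. You should replace the crossover summation by this antisymmetrization (note that your chosen kernel $\Delta_\mu$ is even in $y$, which destroys the required parity structure). A secondary weak point is your ingredient (a): interpolation in $\xi$ from \eqref{est:g-symb} gives only a square-root-type modulus on the second difference of $\d^\alpha_\xi a$, not the full Zygmund bound with $m-|\alpha|$; it is safer to put the $\xi$-derivatives on $G^\psi$ and exploit $\int\d^{\alpha'}_\xi G^\psi\,dy=\d^{\alpha'}_\xi\psi(0,\xi)=0$ for $\alpha'\neq0$.
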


\begin{proof}
 The first inequality is obvious by the chain rule and the properties of $a_\veps$, $G^\psi$.

For second and third ones, we have to observe that
$$
\int\d_iG(x-y,\xi)dx\,=\,\int\d_iG(z,\xi)dz\,=\,\int\mc{F}^{-1}_\eta\left(\eta_i\,\psi(\eta,\xi)\right)dz\,=\,
\left(\eta_i\,\psi(\eta,\xi)\right)_{|\eta=0}\,=\,0\,,
$$
and the same still holds if we keep differenciating with respect to $x$. Hence if we differentiate only once with
respect to the space variable, what we get is the following:
$$
\d_i\sigma_a\,=\,\int\d_iG(x-y)\,a_\veps(y)\,dy\,=\,\int\d_iG(y)\int\rho_\veps(t-s)\left(a(s,x-y,\xi)-a(s,x,\xi)\right)ds\,dy\,,
$$
and the embedding $Z\,\hra\,LL$ implies second inequality. For second derivatives we can use also the parity of $G$ and write
$$
\d_i\d_j\sigma_a\,=\,\frac{1}{2}\,\int\d_i\d_jG(y)\int\rho_\veps(t-s)
\left(a(s,x+y,\xi)+a(s,x-y,\xi)-2a(s,x,\xi)\right)ds\,dy\,,
$$
and the thesis immediately follows. Recalling the spectral localization, the estimate for higher order
derivatives follows from the just proved one, combined with Bernstein's inequalities.

Now, let us consider the first time derivative. Former inequality concerning $\d_ta$ is obvious: as
$\int\rho'\,=\,0$, we have
$$
\sigma_{\d_ta}\,=\,\int G(x-y)\,\frac{1}{\veps^2}\int\rho'\left(s/\veps\right)\left(a(t-s,y,\xi)-a(t,y,\xi)\right)ds\,dy\,.
$$
If we differentiate the classical symbol also in space, instead, the behaviour is better: both $\d_iG$ and $\rho'$
are odd, hence
\begin{eqnarray*}
\d_i\sigma_{\d_ta} & = & \frac{1}{4}\,\int\d_iG(y)\,\frac{1}{\veps^2}\int\rho'(s/\veps)
\left(a(t+s,x+y,\xi)-a(t+s,x-y,\xi)-\right. \\
& & \qquad\qquad\quad\qquad\qquad\qquad\qquad\left.-a(t-s,x+y,\xi)+a(t-s,x-y,\xi)\right)ds\,dy\,.
\end{eqnarray*}
Now, adding and subtracting the quantity $2\,a(t,x,\xi)$ and taking advantage of the Zygmund regularity condition, we have
$$
\left|\d_i\sigma_{\d_ta}\right|\,\leq\,C\int|\d_iG|\,\frac{1}{\veps^2}\int\left|\rho'\left(\frac{s}{\veps}\right)\right|
\left(|s|+|y|\right)ds\,dy\,,
$$
and so we get the expected control. Let us remark that the two terms in the right-hand side of the inequality are
the same once we set $\veps=(\g+|\xi|)^{-1}$.

Finally, arguing as before, the last two inequalities can be easily deduced from the fact that $\rho''$ is even and
has null integral.
\end{proof}

\begin{rem} \label{r:symb}
 It goes without saying that, with obvious changes, an analogous statement holds true also for symbols
of class $\mc{Z}^{(m,\delta)}$, for any $\delta\in\R$.
\end{rem}

\begin{rem} \label{r:indep_eps}
As already mentioned in the proof of the previous lemma, in the sequel we will choose $\veps=(\g+|\xi|)^{-1}$. However, such a choice will not infect our computation, and in particular, the order of the involved operators. This is due to the fact that, if we set $\veps=1/|\xi|$, then the convolution operator behaves like an operator of order $0$. As a matter of fact, for instance for a function $a(t)$ we have
$$
\d_{\xi_j}\left(a_{1/|\xi|}\right)(t)\,=\,\int_{|s|\leq1/|\xi|}\rho(|\xi|s)\,a(t-s)\,s\,+\,|\xi|\,\int_{|s|\leq1/|\xi|}s\,\rho'(|\xi|s)\,\frac{\xi_j}{|\xi|}\,a(t-s)\,ds\,,
$$
and it's easy to see that
$$
\left|\d_{\xi_j}\left(a_{1/|\xi|}\right)\right|\,\leq\,C\,|\xi|^{-1}\,,
$$
for a constant $C$ which depends only on the $L^\infty$ norm of the function $a$ and on $\rho$. Hence, the same holds true also for general symbols $a(t,x,\xi,\g)$.
\end{rem}

From Lemma \ref{l:symb}, properties of paradifferential operators associated to $a_\veps$ and its time derivatives
immediately follow, keeping in mind Theorem \ref{t:action}.

Now, we want to state an accurate result on composition and adjoint operators associated to symbols in the
Zygmund class $\mc{Z}^{(m,\delta)}$.
As a matter of fact, the proof of our energy estimate is based on very special cancellations
at the level of principal and subprincipal parts of the involved operators: hence, we need to understand
the action of the terms up to the next order.

With a little abuse of notation, for a symbol $a$ we will write $\d_xa$ meaning that the space derivative
actually acts on the classical symbol associated to $a$.
\begin{thm} \label{t:symb_calc}
 \begin{itemize}
  \item[(i)] Let us take two symbols $a\in\mc{Z}^{(m,\delta)}$ and $b\in\mc{Z}^{(n,\vrho)}$ and denote by $T_a$, $T_b$ the
respective associated paradifferential operators. Then
\begin{equation} \label{eq:comp_op}
 T_a\,\circ\,T_b\,\,=\,\,T_{a\,b}\,-\,i\,T_{\d_\xi a\,\d_x b}\,\,+\,\,R_\circ\,.
\end{equation}
The principal part $T_{a\,b}$ is of order $(m+n)+(\delta+\vrho)\log$. \\
The subprincipal part $T_{\d_\xi a\,\d_x b}$ has order $(m+n-1)+(\delta+\vrho+1)\log$. \\
The remainder operator $R_\circ$, instead, has order $(m+n-1)+(\delta+\vrho)\log$.
\item[(ii)] Let $a\in\mc{Z}^{(m,\delta)}$.
The adjoint operator (over $L^2$) of $T_a$ is given by the formula
\begin{equation} \label{eq:adj_op}
 \left(T_a\right)^*\,\,=\,\,T_{\oline{a}}\,\,-\,\,i\,T_{\d_\xi\d_x\oline{a}}\,\,+\,\,R_*\,.
\end{equation}
The order of $T_{\oline{a}}$ is still $m+\delta\log$. \\
The order of $T_{\d_\xi\d_x\oline{a}}$ is instead $(m-1)+(\delta+1)\log$. \\
Finally, the remainder term $R_*$ has order $(m-1)+\delta\log$.
 \end{itemize}
\end{thm}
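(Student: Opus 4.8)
The plan is to reduce the whole statement to the classical symbolic calculus for pseudodifferential operators (as in \cite{M-Z}, Section 2), applied to the smoothed symbols $\sigma_a$ and $\sigma_b$, and then to keep careful track of the logarithmic losses produced by the low regularity in $x$. Since $T_a = \sigma_a(\cdot,D_x)$ with $\sigma_a\in\Sigma^{(m,\delta)}$ and likewise for $b$, the composition $T_a\circ T_b$ is the pseudodifferential operator with asymptotic symbol
$$
\sigma_a \,\#\, \sigma_b \;\sim\; \sum_{|\alpha|\geq0}\frac{(-i)^{|\alpha|}}{\alpha!}\,\d_\xi^\alpha\sigma_a\,\d_x^\alpha\sigma_b\,.
$$
First I would isolate the terms $|\alpha|=0$ and $|\alpha|=1$, which give $\sigma_a\sigma_b$ and $-i\,\d_\xi\sigma_a\cdot\d_x\sigma_b$ respectively, and lump everything with $|\alpha|\geq2$ (plus the error in the finite expansion) into $R_\circ$. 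The point is then to identify these principal and subprincipal symbols, up to acceptable remainders, with $\sigma_{ab}$ and $\sigma_{\d_\xi a\,\d_x b}$: this is legitimate because, by Proposition \ref{p:par-op}(ii), a symbol and the classical symbol attached to it differ by an element of $\mc{Z}^{(\cdot-1,\cdot+1)}$, so replacing $\sigma_a\sigma_b$ by $\sigma_{ab}$ costs only a term of order $(m+n-1)+(\delta+\vrho+1)\log$, which is absorbed either in the subprincipal part or in $R_\circ$.

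The orders then follow by bookkeeping using Lemma \ref{l:symb} and Lemma \ref{l:action} (equivalently Theorem \ref{t:action}). The principal part $T_{ab}$ has order $(m+n)+(\delta+\vrho)\log$ since $ab\in\mc{Z}^{(m+n,\delta+\vrho)}$. For the subprincipal part, $\d_\xi\sigma_a$ gains one $\xi$-derivative (hence drops the order by $1$) while $\d_x\sigma_b$, by the second estimate of Lemma \ref{l:symb}, costs one power of $(\g+|\xi|)$ \emph{and} a logarithm; the net effect is a symbol in $\mc{Z}^{(m+n-1,\delta+\vrho+1)}$, so $T_{\d_\xi a\,\d_x b}$ has order $(m+n-1)+(\delta+\vrho+1)\log$. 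For the remainder, the crucial gain is that in each term of the expansion with $|\alpha|\geq2$ one has \emph{two or more} $x$-derivatives on $\sigma_b$ (or, in mixed error terms, at least one on each factor combined with an integral remainder), and by the \emph{third} estimate of Lemma \ref{l:symb} two $x$-derivatives cost only one power of $(\g+|\xi|)$ \emph{without} any logarithm; combined with the two $\xi$-derivatives on $\sigma_a$ this yields order $(m+n-1)+(\delta+\vrho)\log$ for $R_\circ$ — one order below the principal part, with \emph{no} extra logarithmic loss, which is exactly what the energy estimate will require. The adjoint formula \eqref{eq:adj_op} is handled in the same spirit, starting from the classical expansion $\sigma_{(T_a)^*}\sim\sum_\alpha\frac{(-i)^{|\alpha|}}{\alpha!}\d_\xi^\alpha\d_x^\alpha\oline{\sigma_a}$: the $|\alpha|=0$ term gives $T_{\oline a}$, the $|\alpha|=1$ term gives $-i\,T_{\d_\xi\d_x\oline a}$ with order $(m-1)+(\delta+1)\log$ (one $\xi$-derivative down, one $x$-derivative up with a log), and the $|\alpha|\geq2$ tail gives $R_*$ of order $(m-1)+\delta\log$, again using the no-logarithm gain for two $x$-derivatives.

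The main obstacle is precisely the verification that the remainders lose \emph{no logarithm}: one must show that, when expanding $\sigma_a\#\sigma_b$ to second order with integral remainder, every surviving term carries at least two spatial derivatives landing on the low-regularity factor, so that the third (log-free) estimate of Lemma \ref{l:symb} applies, rather than the second (which would cost a log and spoil the order claimed for $R_\circ$). This requires writing out the oscillatory-integral remainder of the composition formula carefully — integrating by parts in the standard way and distributing derivatives — and checking that the mixed terms (one derivative on each factor) either cancel at this order or can themselves be re-expanded so that the genuinely remaining contributions have the required structure. The regularity hypotheses on the symbols (Zygmund in $(t,x)$, smooth in $\xi$) are exactly tailored so that this works; the parameter $\g$ plays no essential role here beyond giving the uniform-in-$\g$ constants, since all estimates are stated with $(\g+|\xi|)$ in place of $|\xi|$. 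Once this is established, assembling the pieces and reading off the orders from Theorem \ref{t:action} is routine.
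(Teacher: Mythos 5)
Your proposal is correct and is essentially the argument the paper intends: the paper's entire ``proof'' is the remark that the theorem follows from Lemma \ref{l:symb}, and your unpacking --- apply the classical composition/adjoint expansions of \cite{M-Z} to the smoothed symbols $\sigma_a,\sigma_b$, read off the principal and $|\alpha|=1$ terms, and use the three estimates of Lemma \ref{l:symb} (in particular the log-free bound for $|\beta|\geq 2$) to see that the $|\alpha|\geq2$ tail loses no logarithm --- is exactly the intended route. The only bookkeeping slip is the claim that the discrepancy between $\sigma_a\sigma_b$ and $\sigma_{ab}$, of order $(m+n-1)+(\delta+\vrho+1)\log$, could be ``absorbed in $R_\circ$'': it cannot (it carries one logarithm too many for the stated order of $R_\circ$) and must be counted with the subprincipal part, which is the alternative you also allow.
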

This theorem immediately follows from Lemma \ref{l:symb}.
\begin{rem} \label{r:Z_norm}
 Let us stress this fundamental fact: the operator norms of all the subprincipal part terms in the previous theorem (i.e.
$T_{\d_\xi a\,\d_x b}$ and $T_{\d_\xi\d_x\oline{a}}$) depend only on the seminorms $|a|_{\mc{Z}}$ and $|b|_{\mc{Z}}$,
and \emph{not on $\g$}.
\end{rem}

Let us end this subsection stating a basic positivity estimate.
\begin{prop} \label{p:pos}
 Let $a(t,x,\xi,\g)$ be a real-valued symbol in $\mc{Z}^{(2m,0)}$, such that
$$
a(t,x,\xi,\g)\,\geq\,\lambda_0\,\left(\gamma+|\xi|\right)^{2m}\,.
$$

Then, there exists a constant $\lambda_1$, depending only on the seminorm $|a|_{\mc{Z}}$ and on $\lambda_0$, such that,
for $\gamma$ large enough, one has
$$
\Re\!\left(T_au,u\right)_{L^2}\,\geq\,\lambda_1\,\|u\|^2_{H^{m}_\gamma}\,.
$$
\end{prop}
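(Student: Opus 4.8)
The plan is to reduce the positivity of $T_a$ to a positivity estimate for a suitable "square root" symbol, exploiting the symbolic calculus of Theorem \ref{t:symb_calc}. First I would introduce the symbol $p(t,x,\xi,\g)\,:=\,\sqrt{a(t,x,\xi,\g)}$, which by the strict ellipticity hypothesis $a\geq\lambda_0(\g+|\xi|)^{2m}$ and the Zygmund regularity of $a$ belongs to $\mc{Z}^{(m,0)}$, with seminorms controlled by $|a|_{\mc{Z}}$, $\lambda_0$ and $\Lambda_0$; indeed $\sqrt{\cdot}$ is smooth and concave on $[\lambda_0,+\infty[$, so composition with it preserves the Zygmund second-difference bound \eqref{est:Z_symb} (one writes the second difference of $\sqrt{a}$ in terms of that of $a$ plus a square of a first difference, the latter being harmless by the embedding $Z\hra LL$ and the fact that $a$ is itself bounded below). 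Moreover $p$ is real-valued. Then, writing $T_p^*$ for the $L^2$-adjoint, I compute $T_p^*\circ T_p$ by combining the adjoint formula \eqref{eq:adj_op} with the composition formula \eqref{eq:comp_op}.

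The key computation is the following. By \eqref{eq:adj_op}, $T_p^*=T_p-iT_{\d_\xi\d_xp}+R_*$ with $R_*$ of order $(m-1)+0\log$; and by \eqref{eq:comp_op},
$$
T_p\circ T_p\,=\,T_{p^2}\,-\,i\,T_{\d_\xi p\,\d_x p}\,+\,\wtilde R\,,
$$
with $\wtilde R$ of order $(2m-1)+0\log$. Hence
$$
T_p^*\circ T_p\,=\,T_{a}\,-\,i\,T_{\d_\xi p\,\d_x p}\,-\,i\,T_{\d_\xi\d_x p}\circ T_p\,+\,R\,,
$$
where $R$ collects $\wtilde R$, $R_*\circ T_p$ and the remaining lower order contributions, so that $R$ has order $(2m-1)+1\log$. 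The decisive point — and the one where paradifferential calculus \emph{with parameters} is essential — is that, by Remark \ref{r:Z_norm}, the operator norm of $T_{\d_\xi\d_x p}$ (as an operator of order $(m-1)+1\log$) is bounded by $|p|_{\mc{Z}}$ \emph{uniformly in $\g$}, and likewise for $T_{\d_\xi p\,\d_x p}$; all these "bad" subprincipal terms, mapped into the right spaces, have norm $\le C(|a|_{\mc{Z}},\lambda_0)$, independent of $\g$. Consequently, taking the real part of $(T_au,u)_{L^2}$ and using $\Re(T_p^*T_pu,u)=\|T_pu\|_{L^2}^2\ge0$, we obtain
$$
\Re(T_au,u)_{L^2}\,\ge\,-\,\bigl|\Re\bigl((iT_{\d_\xi p\,\d_x p}+iT_{\d_\xi\d_x p}\circ T_p-R)u,u\bigr)\bigr|\,\ge\,-\,C_0\,\|u\|^2_{H^{m-1/2}_\g}\,,
$$
the last inequality by the mapping properties of order-$(2m-1)$-type operators composed with Cauchy--Schwarz, where $C_0=C_0(|a|_{\mc{Z}},\lambda_0)$ does \emph{not} depend on $\g$.

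To conclude, I would also need a lower bound on the principal term capturing the ellipticity. One way: replace the naive bound above by observing that $T_pu$ controls $\|u\|_{H^m_\g}$ up to lower order — more precisely, $p$ elliptic of order $m+0\log$ admits a parametrix $E$ of order $-m+0\log$ with $E\circ T_p=\Id+(\text{order }-1+1\log)$, giving $\|u\|_{H^m_\g}\le C\|T_pu\|_{L^2}+C\|u\|_{H^{m-1/2}_\g}$ with $C=C(\lambda_0,|a|_{\mc{Z}})$, uniformly in $\g$. Combining this with $\Re(T_au,u)_{L^2}=\|T_pu\|_{L^2}^2-\Re((\dots)u,u)\ge\|T_pu\|^2_{L^2}-C_0\|u\|^2_{H^{m-1/2}_\g}$ yields
$$
\Re(T_au,u)_{L^2}\,\ge\,\frac{1}{2C^2}\,\|u\|^2_{H^m_\g}\,-\,C_1\,\|u\|^2_{H^{m-1/2}_\g}\,.
$$
Finally, since $\|u\|^2_{H^{m-1/2}_\g}\le \g^{-1}\|u\|^2_{H^m_\g}$ (because $\Lambda(\xi,\g)^{-1/2}\le\g^{-1/2}$ in the integral \eqref{eq:gH-def}), choosing $\g$ large enough so that $C_1\g^{-1}\le\frac{1}{4C^2}$ absorbs the lower order term and gives the claim with $\lambda_1=\frac{1}{4C^2}$.

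\medskip

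The main obstacle I expect is \textbf{keeping every constant genuinely independent of $\g$}: this is precisely why one works in the parameter-dependent calculus. Concretely, one must check (i) that $p=\sqrt a\in\mc{Z}^{(m,0)}$ with $|p|_{\mc{Z}}$ bounded in terms of $|a|_{\mc{Z}},\lambda_0$ only — not in terms of $\g$ — which requires care because the Zygmund estimate \eqref{est:Z_symb} has a built-in factor $(\g+|\xi|)^m$; and (ii) that in the symbolic calculus the subprincipal and remainder operators, when measured as operators between the correct logarithmic Sobolev spaces $H^{s+\alpha\log}_\g$, have norms independent of $\g$, as asserted in Remark \ref{r:Z_norm}. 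Granting those two uniformities, the argument is the standard "Gårding via square root plus parametrix", and the gain of one full power of $\g$ (rather than just $|\xi|$) in the lower order terms — thanks to $\Lambda(\xi,\g)\ge\g$ — is exactly what lets the choice of large $\g$ close the estimate.
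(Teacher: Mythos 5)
Your proof is correct and follows essentially the same route as the paper's: write $a=p^2$ with $p=\sqrt{a}\in\mc{Z}^{(m,0)}$, use the symbolic calculus of Theorem \ref{t:symb_calc} to reduce $\Re\!\left(T_au,u\right)_{L^2}$ to $\left\|T_pu\right\|_{L^2}^2$ plus remainders of order $(2m-1)+\log$ whose norms are independent of $\g$, and absorb the latter for $\g$ large using $\Lambda(\xi,\g)\geq\g$. The only real difference is that you justify the lower bound $\left\|T_pu\right\|_{L^2}\gtrsim\|u\|_{H^m_\g}$ by an explicit parametrix for the elliptic symbol $p$, a step the paper's proof asserts without comment.
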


\begin{proof}
Let us set $a=b^2$: we note that $b\in\mc{Z}^{(m,0)}$. Thanks to symbolic calculus, we can write:
\begin{eqnarray*}
 \Re\!\left(T_au,u\right)_{L^2} & = & \Re\!\left(T_bT_bu,u\right)_{L^2}\,+\,\Re\!\left(R'u,u\right)_{L^2} \\
& = & \Re\!\left(T_bu,(T_b)^*u\right)_{L^2}\,+\,\Re\!\left(R'u,u\right)_{L^2} \\
& = & \Re\!\left(T_bu,T_bu\right)_{L^2}\,+\,\Re\!\left(R'u,u\right)_{L^2}\,+\,\Re\!\left(T_bu,R''u\right)_{L^2}\,,
\end{eqnarray*}
 where the remainder operators $R'$ and $R''$ have principal symbols respectively equal to $\d_\xi b\,\d_xb$ and
$\d_\xi\d_xb$. Hence they have order $(2m-1)+\log$ and $(m-1)+\log$ respectively.
Therefore, using also Lemma \ref{l:symb}, we get (for all $\g\geq1$)
\begin{eqnarray*}
\Re\!\left(T_au,u\right)_{L^2} & \geq & \left\|u\right\|^2_{H^{m}_\g}\,-\,
\left\|R'u\right\|_{H^{-(2m-1)/2-(1/2)\log}_\g}\,\|u\|_{H^{(2m-1)/2+(1/2)\log}_\g}\,- \\
& & \qquad\qquad\qquad\qquad -\,\left\|T_bu\right\|_{H^{-1/2+(1/2)\log}_\g}\,\|R''\|_{H^{1/2-(1/2)\log}_\g} \\
& \geq & \left\|u\right\|^2_{H^{m}_\g}\,-\,C\,\|u\|^2_{H^{(2m-1)/2+(1/2)\log}_\g}\,.
\end{eqnarray*}
Now, by definition of $H^{s+\alpha\log}_\g$ norms, it's easy to see that the second and third terms in the last line can be
absorbed by the first one, for $\g$ large enough.
\end{proof}

\begin{rem} \label{r:pos}
 Let us expressly point out the following fact.
If the positive symbol $a$ has low regularity in time and we smooth it out by convolution with respect to this variable,
we obtain a family $\left(a_\veps\right)_\veps$ of positive symbols, with same constant $\lambda_0$.
Now, all the paradifferential operators associated to these symbols will be positive operators, uniformly in $\veps$: i.e. the
constant $\lambda_1$ of previous inequality can be choosen independently of $\veps$.
\end{rem}

Previous proposition allows us to recover positivity of paradifferential operators associated to
positive symbols. This fact will be fundamental in energy estimates.

Proposition \ref{p:pos}, together with Theorem \ref{t:symb_calc},
implies the following corollary.

\begin{coroll} \label{c:pos}
 Let $a$ be a positive symbol in the class $\mc{Z}^{(m,0)}$ such that $a\geq \lambda_0(\gamma+|\xi|)^m$.

Then there exists $\gamma\geq1$, depending only on $|a|_{\mc{Z}}$ and on $\lambda_0$, such that
$$
\left\|T_au\right\|_{L^2}\,\sim\,\left\|u\right\|_{H^m_\g}
$$ 
for all $u\in H^\infty(\R^N)$.
\end{coroll}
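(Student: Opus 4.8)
The plan is to derive the equivalence $\left\|T_au\right\|_{L^2}\sim\left\|u\right\|_{H^m_\g}$ from the positivity estimate of Proposition \ref{p:pos} together with the upper bound coming from the action theorem (Theorem \ref{t:action}). First I would observe that since $a\in\mc{Z}^{(m,0)}$, the operator $T_a$ has order $m$, so by definition of order it maps $H^m_\g$ into $L^2$ continuously, with operator norm controlled by the relevant seminorms of $a$; this gives the easy inequality $\left\|T_au\right\|_{L^2}\,\leq\,C\,\left\|u\right\|_{H^m_\g}$ for all $\g\geq1$.

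For the reverse inequality, the idea is to exploit that $a$ is real-valued, positive and strictly elliptic of degree $m$, and to ``square'' the situation. I would consider the symbol $a^2\in\mc{Z}^{(2m,0)}$, which satisfies $a^2\,\geq\,\lambda_0^2\,(\g+|\xi|)^{2m}$, hence Proposition \ref{p:pos} applies to it: for $\g$ large enough, $\Re\!\left(T_{a^2}u,u\right)_{L^2}\,\geq\,\lambda_1\,\|u\|^2_{H^m_\g}$. On the other hand, by the symbolic calculus of Theorem \ref{t:symb_calc}(i), we have $T_a\circ T_a\,=\,T_{a^2}\,-\,i\,T_{\d_\xi a\,\d_x a}\,+\,R_\circ$, where the subprincipal term has order $(2m-1)+\log$ and the remainder $R_\circ$ has order $(2m-1)$. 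Using $a$ real-valued, the adjoint formula of Theorem \ref{t:symb_calc}(ii) gives $(T_a)^*\,=\,T_a\,-\,i\,T_{\d_\xi\d_x a}\,+\,R_*$ with lower order corrections. Combining, I would write
\begin{eqnarray*}
\lambda_1\,\|u\|^2_{H^m_\g} & \leq & \Re\!\left(T_{a^2}u,u\right)_{L^2} \\
& = & \Re\!\left(T_aT_au,u\right)_{L^2}\,+\,\Re\!\left(\left(i\,T_{\d_\xi a\,\d_x a}-R_\circ\right)u,u\right)_{L^2} \\
& = & \Re\!\left(T_au,(T_a)^*u\right)_{L^2}\,+\,\mbox{(lower order)} \\
& = & \left\|T_au\right\|^2_{L^2}\,+\,\mbox{(lower order)}\,,
\end{eqnarray*}
where all the ``lower order'' contributions are bounded by $C\,\|u\|^2_{H^{(2m-1)/2+(1/2)\log}_\g}$, exactly as in the proof of Proposition \ref{p:pos}.

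The final step is to absorb these lower order remainders. By the very definition of the norms $\|\cdot\|_{H^{s+\alpha\log}_\g}$ in \eqref{eq:gH-def}, one has $\Lambda^{2m-1}(\xi,\g)\,\log(1+\g+|\xi|)\,\leq\,\veps\,\Lambda^{2m}(\xi,\g)$ pointwise once $\g$ is large enough (depending on $\veps$), so that $\|u\|^2_{H^{(2m-1)/2+(1/2)\log}_\g}\,\leq\,\veps\,\|u\|^2_{H^m_\g}$. Choosing $\veps$ small relative to $\lambda_1$ and $\g$ correspondingly large (the threshold depending only on $|a|_{\mc{Z}}$ and $\lambda_0$, as in Proposition \ref{p:pos} and Remark \ref{r:Z_norm}), we conclude $\left\|T_au\right\|^2_{L^2}\,\geq\,\frac{\lambda_1}{2}\,\|u\|^2_{H^m_\g}$, which together with the upper bound yields the claimed equivalence. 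I expect the only delicate point to be bookkeeping the orders of the subprincipal and remainder operators and checking that their contributions, paired against $u$, genuinely land in the logarithmically-weighted space $H^{(2m-1)/2+(1/2)\log}_\g$ so that the absorption by large $\g$ goes through uniformly; the key structural input making this work is Remark \ref{r:Z_norm}, which ensures the subprincipal operator norms do not grow with $\g$.
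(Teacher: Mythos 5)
Your argument is correct and is essentially the one the paper intends: the corollary is stated there as an immediate consequence of Proposition \ref{p:pos} and Theorem \ref{t:symb_calc}, and your proof (upper bound from Theorem \ref{t:action}, lower bound by applying Proposition \ref{p:pos} to $a^2\in\mc{Z}^{(2m,0)}$ and undoing the composition/adjoint corrections, then absorbing the $H^{(2m-1)/2+(1/2)\log}_\g$ remainders for $\g$ large, uniformly thanks to Remark \ref{r:Z_norm}) is exactly the mechanism already used in the paper's proof of Proposition \ref{p:pos}, run in reverse.
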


\section{Proof of the energy estimate} \label{s:proof}

Let us now tackle the proof of Theorem \ref{th:en_est}.
It relies on defining a suitable energy associated to $u$ and on splitting operator $L$ into a principal part,
given by a paradifferential operator, and a remainder term, which is easy to control by the energy.

The rest is classical: we will control the time derivative of the energy by the energy itself, and we will
get inequality \eqref{est:thesis} by use of Gronwall's Lemma.

\subsection{Energy}

Let us smooth out the coefficients of the operator $L$ with respect to the time variable, as done in \eqref{eq:f_e}, and let
us define the second order symbol
\begin{equation} \label{eq:a}
 \alpha_\veps(t,x,\xi)\,:=\,\sum_{j,k}a_{jk,\veps}(t,x)\,\xi_j\,\xi_k\,+\,\g^2\,.
\end{equation}
By analogy with what done in \cite{C-DG-S} (see also \cite{C-L}, \cite{C-DS} and \cite{C-DS-F-M}
for the case of localized energy), we immediately
link the approximation parameter $\veps$ with the dual variable $\xi$, setting
\begin{equation} \label{eq:param}
\veps\,=\,\left(\g^2\,+\,|\xi|^2\right)^{-1/2}\,.
\end{equation}
For notation convenience, in the sequel we will miss out the index $\veps$.

Let us point out that, thanks to Remark \ref{r:indep_eps}, this choice will not infect the order of the corresponding operators. So, for convenience in the sequel we will forget about the terms coming from the differentiation in the convolution parameter.

\medbreak
Now, by use of Corollary \ref{c:pos},
let us fix a positive $\g$, which will depend only on $\lambda_0$ and on $\sup_{j,k}\left|a_{jk}\right|_{Z_x}$, such that the
operators $T_{\alpha^{-1/4}}$ and $T_{\alpha^{1/4}}$ are positive, i.e. for all $w\in H^\infty$ one has
$$
\left\|T_{\alpha^{-1/4}}w\right\|_{L^2}\,\geq\,\frac{\lambda_0}{2}\,\|w\|_{H^{-1/2}}\;,\qquad
\left\|T_{\alpha^{1/4}}w\right\|_{L^2}\,\geq\,\frac{\lambda_0}{2}\,\|w\|_{H^{1/2}}\,.
$$
\begin{rem} \label{r:pos-eps}
Keeping in mind Remark \ref{r:pos}, it's easy to see that the fixed $\g$ doesn't depend on the
approximation parameter $\veps$.
\end{rem}

This having been done, let us take a $u\in H^{\infty}$ and define
\begin{eqnarray*}
 v(t,x) & := & T_{\alpha^{-1/4}}\d_tu\,-\,T_{\d_t\left(\alpha^{-1/4}\right)}u \\
w(t,x) & := & T_{\alpha^{1/4}}u
\end{eqnarray*}
and the ``Tarama's energy'' associated to $u$:
\begin{equation} \label{eq:E}
 E(t)\,:=\,\left\|v(t)\right\|^2_{L^2}\,+\,\left\|w(t)\right\|^2_{L^2}\,.
\end{equation}
Using positivity of involved operators, it's easy to see that $\|w(t)\|_{L^2}\sim\|u(t)\|_{H^{1/2}}$ and that
\begin{eqnarray*}
 \|v(t)\|_{L^2} & \leq & C\,\left(\|\d_tu(t)\|_{H^{-1/2}}\,+\,\|u(t)\|_{H^{1/2}}\right) \\
\|\d_tu(t)\|_{H^{-1/2}} & \leq & C\left(\|v(t)\|_{L^2}\,+\,
\left\|T_{\d_t\left(\alpha^{-1/4}\right)}u\right\|_{L^2}\right)\;\leq\;C\,(E(t))^{1/2}\,.
\end{eqnarray*}
So, we gather that there exists a constant $C$ for which
\begin{eqnarray}
 \left(E(0)\right)^{1/2} & \leq &
C\left(\left\|\d_tu(0)\right\|_{H^{-1/2}}\,+\,\left\|u(0)\right\|_{H^{1/2}}\right) \label{est:E_0} \\
 \left(E(t)\right)^{1/2} & \geq &
C^{-1}\left(\left\|\d_tu(t)\right\|_{H^{-1/2}}\,+\,\left\|u(t)\right\|_{H^{1/2}}\right)\,. \label{est:E_t}
\end{eqnarray}

\subsection{Changing the operator}

The aim of this subsection is to show that, roughly speaking, we can approximate our striclty hyperbolic operator $L$ with
a paradifferential operator, up to a remainder term of order $1$. The latter can be immediately bounded by the energy,
while the former represents the principal part of $L$, but it is easier to deal with.

For convenience, let us define another second order symbol:
\begin{equation} \label{eq:a-tilde}
 \wtilde{\alpha}(t,x,\xi)\,:=\,\sum_{j,k}a_{jk}(t,x)\,\xi_j\,\xi_k\,+\,\gamma^2\,,
\end{equation}
i.e. $\wtilde{\alpha}$ is analogous to $\alpha$, but functions $a_{jk}$ are not regularized in time.
\begin{lemma} \label{l:L->T}
 Let us define the operator $R$ in the following way:
$$
R\,u\,:=\,\sum_{j,k}\d_j\left(a_{jk}(t,x)\,\d_ku\right)\,-\,\g^2u\,+\,\Re T_{\wtilde{\alpha}}u\,.
$$

Then $R$ maps continuously $H^s$ into $H^{s-1}$, for all $0<s<1$.
\end{lemma}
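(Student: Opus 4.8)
The plan is to rewrite the classical (non-smoothed) elliptic operator $\sum_{j,k}\d_j(a_{jk}\d_ku) - \g^2 u$ in a ``paradifferential + remainder'' form and to check that the remainder has order $1$, i.e.\ it maps $H^s$ into $H^{s-1}$ for $0<s<1$. First I would expand $\sum_{j,k}\d_j(a_{jk}\d_k u) = \sum_{j,k}a_{jk}\,\d_j\d_k u + \sum_{j,k}(\d_j a_{jk})\,\d_k u$. The second sum involves only $\d_ja_{jk}$, which is (at best) distributional; but since the $a_{jk}$ are Zygmund, hence in particular log-Lipschitz and bounded, the product $(\d_j a_{jk})\d_k u$ should be interpreted via paraproduct decomposition. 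The cleanest route is to treat the whole second-order term $\sum_{j,k}\d_j(a_{jk}\d_k u)$ at once using Bony's decomposition: write $a_{jk}\,\d_j\d_k u = T_{a_{jk}}\d_j\d_k u + T_{\d_j\d_k u}\,a_{jk} + R(a_{jk},\d_j\d_k u)$, and similarly handle $(\d_j a_{jk})\d_k u$. The ``good'' term $\sum_{j,k}T_{a_{jk}}\d_j\d_k u$ is, up to lower-order paradifferential corrections accounted for by the symbolic calculus of Theorem \ref{t:symb_calc}, exactly $-\,\Re T_{\wtilde\alpha}u + \g^2 u$ (recall $\wtilde\alpha = \sum_{j,k}a_{jk}\xi_j\xi_k + \g^2$ and note $\d_j\d_k$ contributes the symbol $-\xi_j\xi_k$); taking real parts absorbs the first-order ``transport'' correction $T_{\d_\xi\wtilde\alpha\cdot\d_x(\cdot)}$-type terms into $R$, since by Remark \ref{r:Z_norm} those subprincipal terms have order $1+\log$, hence in particular order $<$ the needed threshold once we are content with mapping $H^s\to H^{s-1}$ for $s<1$ (a logarithmic loss is harmless here because $s<1$ strictly, and $H^{s}\hra H^{(s-1)+\log}$ fails but $H^s \hra H^{(s-1-\eta)}$ holds for any $\eta>0$; more precisely the $\log$-loss term maps $H^s\to H^{(s-1)+\log}\hra H^{s-1}$ only if we are careful, so I would actually keep $s<1$ and use $H^{s}\subset H^{(s-1)+(1+\e)\log}$ — wait, the point is simply that order $(1)+(\delta)\log$ operators map $H^s\to H^{(s-1)-\delta\log}\hra H^{s-1}$ when $\delta\ge 0$, which is the case).

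The core of the estimate is thus controlling the ``bad'' paraproduct pieces $\sum_{j,k}T_{\d_j\d_k u}\,a_{jk}$, $\sum_{j,k}R(a_{jk},\d_j\d_k u)$, and the analogous pieces coming from $(\d_j a_{jk})\d_k u$. Here I would invoke the standard continuity properties of paraproducts and remainders with one low-regularity factor in $Z = B^1_{\infty,\infty}$ (equivalently $\mc{C}^1_*$, cf.\ \eqref{est:Z}): the operator $v\mapsto T_v a$ for $a\in B^1_{\infty,\infty}$ maps $H^\sigma\to H^{\sigma+1}$ for all $\sigma$, and the remainder $R(a,\cdot)$ maps $H^\sigma\to H^{\sigma+1}$ as long as $\sigma + 1 > 0$, i.e.\ $\sigma > -1$. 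Applying this with $v = \d_j\d_k u \in H^{s-2}$ (since $u\in H^s$) gives outputs in $H^{s-1}$ precisely when $s - 2 > -1$, i.e.\ $s > 1$ — which is the wrong direction! So in fact one should not expand the double derivative; instead keep the divergence structure: $\d_j(a_{jk}\d_k u)$ with $\d_k u\in H^{s-1}$, decompose $a_{jk}\d_k u = T_{a_{jk}}\d_ku + T_{\d_ku}a_{jk} + R(a_{jk},\d_ku)$, apply $\d_j$, and note $T_{\d_k u}a_{jk}$ maps into $H^{s}$ (gain of $1$ from $a\in B^1_{\infty,\infty}$ acting on $\d_k u\in H^{s-1}$), hence after $\d_j$ lands in $H^{s-1}$; and $R(a_{jk},\d_ku)$ requires $(s-1)+1 > 0$, i.e.\ $s>0$, landing in $H^s$, hence in $H^{s-1}$ after $\d_j$. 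That is exactly why the hypothesis is $0<s<1$.

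So the key steps, in order, are: (1) write $Ru = \sum_{j,k}\big[\d_j(T_{\d_k u}a_{jk}) + \d_j R(a_{jk},\d_k u)\big] + \big(\sum_{j,k}\d_j T_{a_{jk}}\d_k u - \g^2 u + \Re T_{\wtilde\alpha}u\big)$; (2) bound the first bracket by paraproduct/remainder continuity together with $a_{jk}\in Z = B^1_{\infty,\infty}$, using $0<s<1$ to make the remainder term legitimate; (3) identify $\sum_{j,k}\d_j T_{a_{jk}}\d_k u$ with $-\Re T_{\wtilde\alpha}u + \g^2 u$ modulo terms of order $1+\delta\log$ via Theorem \ref{t:symb_calc} (composition of $T_{a_{jk}}$ with the Fourier multipliers $\d_j,\d_k$, and the fact that taking $\Re$ kills the $i T_{\d_\xi\cdot\d_x\cdot}$ correction up to order $1$), then invoke Theorem \ref{t:action}/Lemma \ref{l:action} to see such terms map $H^s\to H^{s-1}$. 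The main obstacle I anticipate is bookkeeping in step (3): making precise that the self-adjoint correction $\Re(T_{a_{jk}}(-\d_j\d_k) ) = -\Re T_{\wtilde\alpha}u + \g^2 u + (\text{order }1)$ requires the adjoint formula \eqref{eq:adj_op} and the observation (Remark \ref{r:Z_norm}) that the subprincipal constants do not blow up in $\g$; also one must check the $\g^2 u$ terms match exactly (they do, by the $+\g^2$ in both \eqref{eq:a-tilde} and the definition of $R$). No delicate estimate is needed beyond the standard paraproduct lemmas once the divergence form is respected.
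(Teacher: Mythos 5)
Your final decomposition --- keep the divergence form, split $a_{jk}\,\d_ku$ by Bony's decomposition, use $a_{jk}\in B^1_{\infty,\infty}$ together with $s-1<0$ for the paraproduct $T_{\d_ku}a_{jk}$ and $s>0$ for the remainder $R(a_{jk},\d_ku)$, then identify $\sum_{j,k}\d_jT_{a_{jk}}\d_k$ with $-\Re T_{\wtilde\alpha}+\g^2$ modulo order-$1$ terms by symbolic calculus --- is essentially the paper's argument; the only cosmetic difference is that the paper estimates the two ``bad'' Bony pieces together as a single sum $\sum_{\nu}S_{\nu+2}\d_ku\,\Delta_\nu a_{jk}$ by a direct dyadic computation rather than quoting the two continuity theorems separately, and the constraints $0<s<1$ enter in exactly the way you describe.

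One genuine error must be flagged, though. Your parenthetical claim that ``order $(1)+\delta\log$ operators map $H^s\to H^{(s-1)-\delta\log}\hra H^{s-1}$ when $\delta\ge0$'' is false: the inclusions between logarithmic Sobolev spaces go the other way, $H^{s-1}\hra H^{(s-1)-\delta\log}$ for $\delta\geq0$, so an honest logarithmic loss at order $1$ would \emph{not} land you in $H^{s-1}$. If the subprincipal corrections really survived with order $1+\log$, step (3) would fail. What saves the argument is the exact cancellation you do mention later: the Leibniz term $\tfrac{i}{2}\sum_jT_{\d_j\d_{\xi_j}\wtilde\alpha}$ produced by commuting $\d_j$ past $T_{a_{jk}\xi_k}$ coincides with the subprincipal term $-i\,T_{\d_\xi\d_x\wtilde\alpha}$ of $(T_{\wtilde\alpha})^*$ in \eqref{eq:adj_op}, so writing $\Re T_{\wtilde\alpha}=\tfrac12\bigl(T_{\wtilde\alpha}+(T_{\wtilde\alpha})^*\bigr)$ kills the order-$(1+\log)$ part identically, leaving only genuine order-$1$ remainders ($R_*$ and the term with symbol $\d^2_x\d^2_\xi\wtilde\alpha$, which by Lemma \ref{l:symb} carries no logarithm). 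So keep the cancellation as the load-bearing step and discard the ``a logarithmic loss is harmless'' fallback.
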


\begin{proof}
Given $u\in H^s$, first of all we want to prove that the difference
$$
a_{jk}\,\d_ku\,-\,T_{a_{jk}}\d_ku\,=\,a_{jk}\,\d_ku\,-\,i\,T_{a_{jk}\xi_k}u\,=\,
\sum_{\nu\geq\mu}S_{\nu+2}\d_ku\,\,\Delta_\nu a_{jk}\,=\,\sum_{\nu\geq\mu}R_\nu
$$
is still in $H^s$. As each $R_\nu$ is spectrally supported in a ball of radius proportional to $2^\nu$, and as $s>0$,
we can apply Lemma 2.84 of \cite{B-C-D}. So, it's enough to estimate the $L^2$ norm of each term $R_\nu$. Using
also characterization \eqref{est:Z}, we have
$$
\left\|R_\nu\right\|_{L^2}\,\leq\,\|S_{\nu+2}\d_ku\|_{L^2}\,\|\Delta_\nu a_{jk}\|_{L^\infty}\,\leq\,
C\,\|S_{\nu+2}\d_ku\|_{L^2}\,2^{-\nu}\,.
$$
Let us note that the constant $C$ depends on the
Zygmund seminorm of $a_{jk}$. As $\nabla u\in H^{s-1}$, with $s<1$, Proposition 2.79 of
\cite{B-C-D} applies, and it finally gives us
$$
\left\|R_\nu\right\|_{L^2}\,\leq\,C\,\|\nabla u\|_{H^{s-1}}\,2^{-s\nu}\,c_\nu\,,
$$
for a sequence $\left(c_\nu\right)_\nu\in\ell^2(\N)$ of unitary norm. So, the above mentioned Lemma 2.84 implies that
$a_{jk}\,\d_ku\,-\,i\,T_{a_{jk}\xi_k}u\,\in\,H^s$, as claimed. Therefore,
$$
\sum_{j,k}\d_j\left(a_{jk}\d_ku\right)\,-\,i\,\d_jT_{a_{jk}\xi_k}u\;\in\;H^{s-1}\,.
$$

Now, some computations are needed. With a little abuse of notation, we will write $\d_j\wtilde{\alpha}$
meaning that the space derivative actually acts on the classical symbol
associated to $\wtilde{\alpha}$. 

Noting that $\sum_{j,k}a_{jk}\xi_k\,=\,(\d_{\xi_j}\wtilde{\alpha})/2$, we get
\begin{eqnarray*}
i\sum_{j,k}\d_jT_{a_{jk}\xi_k} & = & i\sum_{j,k}T_{a_{jk}\xi_k}\d_j\,+\,\sum_j\frac{1}{2}\,T_{\d_j\d_{\xi_j}\wtilde{\alpha}} \\
& = &  -\,T_{\wtilde{\alpha}}\,+\,\g^2\,+\,\frac{i}{2}\sum_{j}T_{\d_j\d_{\xi_j}\wtilde{\alpha}}\;=\;-\,\frac{1}{2}\left(T_{\wtilde{\alpha}}+\left(T_{\wtilde{\alpha}}\right)^*\right)\,+\,\g^2\,+\,R'\,,
\end{eqnarray*}
where we have used also Theorem \ref{t:symb_calc}. The operator $R'$ has symbol $\d^2_x\d^2_\xi\wtilde{\alpha}$, and so,
by Lemma \ref{l:symb}, it has order $1$.

In the end, we have discovered that
$$
\sum_{j,k}\d_j\left(a_{jk}\d_k\,\right)\,-\,\g^2\,+\,\Re T_{\wtilde{\alpha}}\,:\,H^s\,\longrightarrow\,H^{s-1}
$$
is a continuous operator of order $1$, provided that $0<s<1$. The lemma is now proved.
\end{proof}

In the same spirit of Lemma \ref{l:L->T}, we have also the next result.
\begin{lemma} \label{l:b}
 Given a H\"older continuous function $b\in\mc{C}^\theta(\R^N)$, for some $\theta>0$, let us define the remainder operator
$$
\wtilde{B}\,v\;:=\;b\,v\,-\,T_b\,v\,.
$$
Then, $\wtilde{B}$ maps $H^{-s}(\R^N)$ into $H^{\theta-s}(\R^N)$ continuously for all $s\in\,]0,\theta[\,$.
\end{lemma}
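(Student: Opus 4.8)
The plan is to mimic the proof of Lemma \ref{l:L->T}, but in a simpler setting since no derivatives are present. Given $v\in H^{-s}$ with $0<s<\theta$, I would write the difference $b\,v-T_b\,v$ using the paraproduct decomposition (Bony's decomposition):
\begin{equation*}
b\,v\,-\,T_b\,v\;=\;T_v\,b\,+\,R(b,v)\,,
\end{equation*}
where $T_v b$ is the ``reverse'' paraproduct and $R(b,v)=\sum_{|\nu-\mu|\leq N_0}\Delta_\nu b\,\Delta_\mu v$ is the remainder (resonant) term. Alternatively, and closer to the spirit of the cited proof, I would write directly
\begin{equation*}
b\,v\,-\,T_b\,v\;=\;\sum_{\nu\geq\mu_0}S_{\nu+2}v\;\Delta_\nu b\;=\;\sum_{\nu\geq\mu_0}R_\nu\,,
\end{equation*}
where each $R_\nu$ is spectrally supported in a ball of radius proportional to $2^\nu$ (here $\mu_0$ is the starting index associated to the cut-off $\psi_\mu$ defining $T_b$).

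Since $b\in\mc{C}^\theta$ means $\|\Delta_\nu b\|_{L^\infty}\leq C\,2^{-\theta\nu}$ and since $v\in H^{-s}$ gives $\|S_{\nu+2}v\|_{L^2}\leq C\,2^{s\nu}\,c_\nu$ with $(c_\nu)_\nu\in\ell^2$ of unit norm (this uses $s>0$, so the low-frequency truncations stay bounded in the summable sense), I would estimate
\begin{equation*}
\|R_\nu\|_{L^2}\;\leq\;\|S_{\nu+2}v\|_{L^2}\,\|\Delta_\nu b\|_{L^\infty}\;\leq\;C\,\|b\|_{\mc{C}^\theta}\,\|v\|_{H^{-s}}\,2^{(s-\theta)\nu}\,c_\nu\,.
\end{equation*}
Because $s<\theta$, the exponent $s-\theta$ is negative, so the sequence $\bigl(2^{(\theta-s)\nu}\|R_\nu\|_{L^2}\bigr)_\nu$ is in $\ell^2$. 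Now I would invoke Lemma 2.84 of \cite{B-C-D}: a series of functions $R_\nu$ each spectrally supported in a ball of radius $\sim 2^\nu$, with $\bigl(2^{\sigma\nu}\|R_\nu\|_{L^2}\bigr)_\nu\in\ell^2$, sums to an element of $H^\sigma$ provided $\sigma>0$. Applying this with $\sigma=\theta-s>0$ yields $b\,v-T_b\,v\in H^{\theta-s}$ together with the norm bound $\|\wtilde{B}v\|_{H^{\theta-s}}\leq C\,\|b\|_{\mc{C}^\theta}\,\|v\|_{H^{-s}}$, which is precisely the claimed continuity.

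The only genuine subtlety — the ``main obstacle,'' though it is mild — is making sure the hypotheses of the Littlewood--Paley summation lemma (Lemma 2.84 of \cite{B-C-D}) are met: one needs the target regularity index $\theta-s$ to be \emph{strictly positive}, which is exactly why the restriction $s\in\,]0,\theta[\,$ appears, and one needs the remainder pieces $R_\nu$ to be supported in \emph{balls} (not annuli), which holds because $S_{\nu+2}v$ contributes all low frequencies while $\Delta_\nu b$ localizes near $2^\nu$, so the product lives in a ball of radius $\sim 2^\nu$. One should also briefly note that the lower bound $s>0$ is used a second time to control $\|S_{\nu+2}v\|_{L^2}$ by $2^{s\nu}$ times an $\ell^2$ sequence (Proposition 2.79 of \cite{B-C-D} or a direct computation from \eqref{est:dyad-Sob}). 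Everything else is the routine bookkeeping already carried out in the proof of Lemma \ref{l:L->T}, so I would keep the write-up short and simply point to that argument.
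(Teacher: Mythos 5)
Your proof is correct and follows essentially the same route as the paper's: the decomposition $\wtilde{B}v=\sum_{\nu\geq\mu}S_{\nu+2}v\,\Delta_\nu b$, the bounds $\|\Delta_\nu b\|_{L^\infty}\lesssim 2^{-\nu\theta}$ and $\|S_{\nu+2}v\|_{L^2}\lesssim 2^{\nu s}d_\nu$ with $(d_\nu)_\nu\in\ell^2$ (using $s>0$), and the summation via Lemma 2.84 of \cite{B-C-D} with regularity index $\theta-s>0$. Nothing is missing.
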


\begin{proof}
 As just done, let us write
$$
\wtilde{B}\,v\;=\;\sum_{\nu\geq\mu}S_{\nu+2}v\,\Delta_\nu b\;=\;\sum_{\nu\geq\mu}B_\nu\,.
$$
Hence, thanks to Lemma 2.84 of \cite{B-C-D}, it's enough to estimate le $L^2$ norm of each $B_\nu$.

Using the dyadic characterization of H\"older spaces, we have
$$
\left\|\Delta_\nu b\right\|_{L^\infty}\,\leq\,C\,\|b\|_{\mc{C}^\theta}\,2^{-\nu\theta}\,,
$$
while, as $s>0$, Proposition 2.79 of \cite{B-C-D} gives
$$
\left\|S_{\nu+2}v\right\|_{L^2}\,\leq\,C\,\|v\|_{H^{-s}}\,2^{\nu s}\,d_\nu\,,
$$
where the sequence $\left(d_\nu\right)_\nu$ belongs to  the unitary sphere in $\ell^2(\N)$.

Therefore, we finally gather
$$
\left\|B_\nu\right\|_{L^2}\,\leq\,C\,\|b\|_{\mc{C}^\theta}\,\|v\|_{H^{-s}}\,2^{-\nu(\theta-s)}\,d_\nu\,,
$$
and this implies $\wtilde{B}v\in H^{\theta-s}$.
\end{proof}

Thanks to Lemmas \ref{l:L->T} and \ref{l:b}, equation \eqref{eq:op} can be rewritten in the following way:
\begin{eqnarray} 
 \d^2_tu & = & -\,\Re T_\alpha u\,+\,\Re\left(T_\alpha-T_{\wtilde{\alpha}}\right)u\,+\,Ru\,+ \label{eq:op-change} \\
& & \qquad\qquad\qquad\qquad+\,Lu\,-\sum_{j=0}^N\left(T_{b_j}\d_ju\,+\,\wtilde{B}_j\d_ju\right)\,-\,c(t,x)\,u\,, \nonumber
\end{eqnarray}
with the notations $\d_0=\d_t$ and $\wtilde{B}_j=b_j-T_{b_j}$.

\subsection{Energy estimates}

Now we are finally ready to compute the time derivative of the energy.
Thanks to ``Tarama's cancellations'' and identity \eqref{eq:op-change}, we have
\begin{eqnarray}
 \frac{d}{dt}\left\|v(t)\right\|^2_{L^2} & = & 2\Re\left(v(t)\,,\,T_{\alpha^{-1/4}}\d^2_tu\right)_{L^2}\,-\,
2\Re\left(v(t)\,,\,T_{\d^2_t\left(\alpha^{-1/4}\right)}u\right)_{L^2} \label{eq:d_t-v} \\
& = & -\,2\Re\left(v(t)\,,\,T_{\d^2_t\left(\alpha^{-1/4}\right)}u\right)_{L^2}\,+\,
2\Re\left(v(t)\,,\,T_{\alpha^{-1/4}}Lu\right)_{L^2}\,+ \nonumber \\
& & +\,2\Re\left(v(t)\,,\,T_{\alpha^{-1/4}}Ru\right)_{L^2}\,+\,
2\Re\left(v(t)\,,\,T_{\alpha^{-1/4}}\Re\left(T_\alpha-T_{\wtilde\alpha}\right)u\right)_{L^2}\,- \nonumber \\
& & -\,2\Re\left(v(t)\,,\,T_{\alpha^{-1/4}}\sum_{j=0}^N\left(T_{b_j}\d_ju\,+\,\wtilde{B}_j\d_ju\right)\right)_{L^2}\,- \nonumber \\
& & -\,2\Re\left(v(t)\,,\,T_{\alpha^{-1/4}}cu\right)_{L^2}\,+\,
2\Re\left(v(t)\,,\,-\,T_{\alpha^{-1/4}}\Re T_\alpha u\right)_{L^2}\,. \nonumber
\end{eqnarray}
By use of Lemma \ref{l:symb}, keeping in mind the choice of the parameter $\veps$ in \eqref{eq:param},
it's quite easy to see that the following estimates hold true:
\begin{eqnarray*}
 \left|\Re\left(v(t)\,,\,T_{\d^2_t\left(\alpha^{-1/4}\right)}u\right)_{L^2}\right| & \leq & C\,\|v(t)\|_{L^2}\,
\|u(t)\|_{H^{1/2}}\;\leq\;C\,E(t) \\
\left|\Re\left(v(t)\,,\,T_{\alpha^{-1/4}}Lu\right)_{L^2}\right| & \leq & C\,(E(t))^{1/2}\,\,\|Lu(t)\|_{H^{-1/2}} \\
\left|\Re\left(v(t)\,,\,T_{\alpha^{-1/4}}Ru\right)_{L^2}\right| & \leq & C\,\|v(t)\|_{L^2}\,
\|u(t)\|_{H^{1/2}}\;\leq\;C\,E(t) \\
\left|\Re\left(v(t)\,,\,T_{\alpha^{-1/4}}\Re\left(T_\alpha-T_{\wtilde\alpha}\right)u\right)_{L^2}\right| & \leq &
C\,\|v(t)\|_{L^2}\,\|u(t)\|_{H^{1/2}}\;\leq\;C\,E(t)\,,
\end{eqnarray*}
where, in the last inequality, we have used also relation \eqref{est:f_e-f}.

\medbreak
Let us now focus on the first order terms, and fix an index $0\leq j\leq N$.
As $b_j\in\mc{C}^\theta(\R^N)$, it is in particular bounded. Therefore, the corresponding paraproduct operator has order $0$
(see e.g. Theorem 2.82 of \cite{B-C-D}), and then
$$
\left|\Re\left(v(t)\,,\,T_{\alpha^{-1/4}}T_{b_j}\d_ju\right)_{L^2}\right|\,\leq\,C\,\|v(t)\|_{L^2}\,\|\d_ju(t)\|_{H^{-1/2}}
\;\leq\;C\,E(t)\,.
$$
For the remainder operator, as $\theta>1/2$ we can apply Lemma \ref{l:b}, and we get
\begin{eqnarray*}
\left|\Re\left(v(t)\,,\,T_{\alpha^{-1/4}}\wtilde{B}_j\d_ju\right)_{L^2}\right| & \leq & C\,\|v(t)\|_{L^2}\,
\left\|\wtilde{B}_j\d_ju\right\|_{H^{-1/2}}\;\leq\;C\,\|v(t)\|_{L^2}\,\left\|\wtilde{B}_j\d_ju\right\|_{H^{\theta-1/2}} \\
& \leq & C\,\|v(t)\|_{L^2}\,\|\d_ju(t)\|_{H^{-1/2}}\;\leq\;C\,E(t)\,.
\end{eqnarray*}

The analysis of the term of order $0$ is instead straightforward:
$$
\left|\Re\left(v(t)\,,\,T_{\alpha^{-1/4}}cu\right)_{L^2}\right|\,\leq\,C\,\|v(t)\|_{L^2}\,\|cu\|_{H^{-1/2}}\,\leq\,
C\,\|v(t)\|_{L^2}\,\|u(t)\|_{L^2}\,\leq\,C\,E(t)\,.
$$

So, it remains us to handle only the last term of relation \eqref{eq:d_t-v}: this will be done in a while.
For the moment, let us differentiate the second part of the energy with respect to time:
\begin{equation} \label{eq:d_t-w}
 \frac{d}{dt}\left\|w(t)\right\|^2_{L^2}\,=\,2\Re\left(w(t)\,,\,T_{\d_t\left(\alpha^{1/4}\right)}u\right)_{L^2}\,+\,
2\Re\left(w(t)\,,\,T_{\alpha^{1/4}}\d_tu\right)_{L^2}\,.
\end{equation}
We couple each term of this relation with the respective one coming from the last item of \eqref{eq:d_t-v} and, by use
of symbolic calculus (recall in particular Theorem \ref{t:symb_calc}), we will try to control them. Let us
be more precise and make rigorous what we have just said.

First of all, we consider
$$
T_1\,:=\,
2\Re\left(-\,T_{\d_t\left(\alpha^{-1/4}\right)}u\,,\,-\,T_{\alpha^{-1/4}}\Re T_\alpha u\right)_{L^2}\,+\,
2\Re\left(T_{\alpha^{1/4}}u\,,\,T_{\d_t\left(\alpha^{1/4}\right)}u\right)_{L^2}\,.
$$
Noticing that $\d_t\left(\alpha^{1/4}\right)=-\alpha^{1/2}\d_t\left(\alpha^{-1/4}\right)$, we can write
$$
2\Re\left(T_{\alpha^{1/4}}u\,,\,T_{\d_t\left(\alpha^{1/4}\right)}u\right)_{L^2}\,=\,
2\Re\left(T_{\alpha^{1/4}}u\,,\,-\,T_{\alpha^{1/2}}T_{\d_t\left(\alpha^{-1/4}\right)}u\right)\,+\,
2\Re\left(T_{\alpha^{1/4}}u\,,\,Mu\right)_{L^2}\,,
$$
where $M$ has principal symbol equal to $\d_\xi\left(\alpha^{1/2}\right)\,\d_x\d_t\left(\alpha^{-1/4}\right)$. Therefore, we get
$$
T_1\,=\,2\Re\Biggl(T_{\d_t\left(\alpha^{-1/4}\right)}u\,,\biggl(T_{\alpha^{-1/4}}\Re T_\alpha\,-\,
\left(T_{\alpha^{1/2}}\right)^*\,T_{\alpha^{1/4}}\biggr)u\Biggr)_{L^2}\,+\,
2\Re\left(T_{\alpha^{1/4}}u\,,\,Mu\right)_{L^2}\,.
$$
By Lemma \ref{l:symb}, the remainder term can be controlled by the energy:
$$
\left|\Re\left(T_{\alpha^{1/4}}u\,,\,Mu\right)_{L^2}\right|\,\leq\,C\,E(t)\,.
$$
Let us now consider the operator
$$
P\,:=\,T_{\alpha^{-1/4}}\Re T_\alpha\,-\,\left(T_{\alpha^{1/2}}\right)^*\,T_{\alpha^{1/4}}\,.
$$
A straightforward computation shows that the principal symbol of $P$ is $0$; moreover, as 
$\Re Op=(Op+(Op)^*)/2$, from Theorem \ref{t:symb_calc} we gather
that its subprincipal symbol is given by
$$
-\,i\left(\d_\xi\left(\alpha^{-1/4}\right)\,\d_x\alpha\,+\,\frac{1}{2}\,\alpha^{-1/4}\,\d_\xi\d_x\alpha\,+\,
\d_\xi\left(\alpha^{1/2}\right)\,\d_x\left(\alpha^{1/4}\right)\,+\,
\d_\xi\d_x\left(\alpha^{1/2}\right)\,\alpha^{1/4}\right)\,,
$$
and so it has order $1/2\,+\,\log$. Therefore, we finally get the control for $T_1$:
$$
\left|T_1\right|\,\leq\,C\left(E(t)\,+\,\left\|T_{\d_t\left(\alpha^{-1/4}\right)}u\right\|_{H^{\log}}\,
\left\|Pu\right\|_{H^{-\log}}\right)\,\leq\,C\left(E(t)\,+\,\|u\|^2_{H^{1/2}}\right)\,\leq\,C\,E(t)\,.
$$

Now, let us handle the term
\begin{eqnarray}
T_2 & := & 2\Re\left(T_{\alpha^{1/4}}u\,,\,T_{\alpha^{1/4}}\d_tu\right)_{L^2}\,+\,
2\Re\left(T_{\alpha^{-1/4}}\d_tu\,,\,-\,T_{\alpha^{-1/4}}\Re T_\alpha u\right)_{L^2} \label{eq:T_2} \\
& = & 2\Re\left(\d_tu\,,\,Qu\right)_{L^2}\,, \nonumber
\end{eqnarray}
where we have defined the operator
$$
Q\,:=\,Q_1\,-\,Q_2\,=\,
\left(T_{\alpha^{1/4}}\right)^*\,T_{\alpha^{1/4}}\,-\,\left(T_{\alpha^{-1/4}}\right)^*\,T_{\alpha^{-1/4}}\,\Re T_\alpha\,.
$$
To compute the order of $Q$, let us proceed with care and analyse the symbol of each of its terms. Once again, by use of
Theorem \ref{t:symb_calc} one infers
\begin{eqnarray}
Q_1 & = & T_{\alpha^{1/2}}\,-\,i\left(T_{\d_\xi\left(\alpha^{1/4}\right)\d_x\left(\alpha^{1/4}\right)}\,+\,
T_{\d_\xi\d_x\left(\alpha^{1/4}\right)\alpha^{1/4}}\right)\,- \label{eq:Q_1}\\
& & \qquad-\,\left(T_{\frac{1}{2}\,\d^2_\xi\left(\alpha^{1/4}\right)\d^2_x\left(\alpha^{1/4}\right)}
\,+\,T_{\d^2_\xi\d_x\left(\alpha^{1/4}\right)\d_x\left(\alpha^{1/4}\right)}
\,+\,T_{\frac{1}{2}\,\d^2_\xi\d^2_x\left(\alpha^{1/4}\right)\alpha^{1/4}}\right)\,+\,l.o.t.\,. \nonumber
\end{eqnarray}
In the same way, one gets also
\begin{eqnarray*}
T_{\alpha^{-1/4}}\,\Re T_\alpha & = & T_{\alpha^{3/4}}\,-\,i\left(T_{\d_\xi\left(\alpha^{-1/4}\right)\d_x\alpha}\,+\,
\frac{1}{2}\,T_{\alpha^{-1/4}\,\d_\xi\d_x\alpha}\right)\,- \\
& & \qquad\qquad-\,\frac{1}{2}\,\left(T_{\d^2_\xi\left(\alpha^{-1/4}\right)\d^2_x\alpha}\,+\,
T_{\d_\xi\left(\alpha^{-1/4}\right)\d_\xi\d^2_x\alpha}\,+\,
T_{\alpha^{-1/4}\d^2_\xi\d^2_x\alpha}\right)\,+\,l.o.t.\,,
\end{eqnarray*}
and so finally we arrive to the formula
\begin{eqnarray}
 Q_2 & = & T_{\alpha^{1/2}}-i\left(T_{\d_\xi\left(\alpha^{-1/4}\right)\d_x\left(\alpha^{3/4}\right)\,+\,
\d_\xi\d_x\left(\alpha^{-1/4}\right)\alpha^{3/4}\,+\,\alpha^{-1/4}\d_\xi\left(\alpha^{-1/4}\right)\d_x\alpha\,+\,
\frac{1}{2}\,\alpha^{-1/2}\d_\xi\d_x\alpha}\right)- \label{eq:Q_2} \\
& & \qquad-\,\left(T_{\frac{1}{2}\,\d^2_\xi\left(\alpha^{-1/4}\right)\d^2_x\left(\alpha^{3/4}\right)\,+\,
\d^2_\xi\d_x\left(\alpha^{-1/4}\right)\d_x\left(\alpha^{3/4}\right)\,+\,
\d_\xi\left(\alpha^{-1/4}\right)\d_x\left(\d_\xi\left(\alpha^{-1/4}\right)\d_x\alpha\right)}\,+\right. \nonumber\\
& & \qquad\qquad+\,T_{\frac{1}{2}\,\d_\xi\left(\alpha^{-1/4}\right)\d_x\left(\alpha^{-1/4}\d_\xi\d_x\alpha\right)\,+\,
\frac{1}{2}\,\d^2_\xi\d^2_x\left(\alpha^{-1/4}\right)\alpha^{3/4}\,+\,
\frac{1}{2}\,\alpha^{-1/4}\d^2_\xi\left(\alpha^{-1/4}\right)\d^2_x\alpha}\,+ \nonumber \\
& & \qquad\qquad\qquad\quad+\,
T_{\frac{1}{2}\,\alpha^{-1/4}\d_\xi\left(\alpha^{-1/4}\right)\d_\xi\d^2_x\alpha\,+\,
\frac{1}{2}\,\alpha^{-1/2}\d^2_\xi\d^2_x\alpha}\,+ \nonumber \\
& & \qquad\qquad\qquad\qquad\left.+\,
T_{\d_\xi\d_x\left(\alpha^{-1/4}\right)\d_\xi\left(\alpha^{-1/4}\right)\d_x\alpha\,+\,
\frac{1}{2}\,\d_\xi\d_x\left(\alpha^{-1/4}\right)\alpha^{-1/4}\d_\xi\d_x\alpha}\right)\,+\,l.o.t.\,. \nonumber
\end{eqnarray}

Now, it's evident that $Q$ has null principal symbol. Its subprincipal symbol, instead, has order $\log$; nevertheless,
comparing equalities \eqref{eq:Q_1} and \eqref{eq:Q_2}, we discover that it is identically $0$, too.
\begin{rem} \label{r:new_en}
Let us stress again this point: requiring also the subprincipal part of the operator $Q$ to be null forces us to take
the symbol $\alpha$ of order $1/2$.

In particular, one can try to introduce a new energy, defining
$$
\wtilde{v}(t,x)\,:=\,T_\beta\d_tu\,-\,T_{\d_t\beta}u\,,\quad\wtilde{w}(t,x)\,:=\,T_{\beta\alpha^{1/2}}u
\quad\mbox{ and }\quad\wtilde{E}(t)\,:=\,\left\|\wtilde{v}(t)\right\|^2_{L^2}\,+\,\left\|\wtilde{w}(t)\right\|^2_{L^2}\,,
$$
where $\beta$ is a generic symbol of order $s$ (not necessarily $s=1/2$). The presence of $\alpha^{1/2}$ in $\wtilde{w}$ is due
to the fact that we want $\|\wtilde{w}\|_{L^2}\sim\|u\|_{H^{s+1}}$. \\
Repeating the same computations as before, one can see that the following constraints appear:
\begin{itemize}
 \item $\beta$ has to be of the form $\beta=\alpha^{-1/4}f$, for some symbol $f(x,\xi)$ of order $s-1/2$;
\item even in the simplest case, i.e. $f=f(\xi)$, the subprincipal part of the operator $Q$ is not null, if
$f$ is not null, and its order is $(2s+1)+\log$, which is not suitable for us, due to the logarithmic loss.
\end{itemize}
\end{rem}

Let us come back to the operator $Q$. Its sub-subprincipal part is not void, but, analysing one term by one, we discover that it has order $0$.
\begin{rem} \label{r:order}
This fact can be seen also without all the previous complicated computations. Let us compare the subprincipal
symbol of $Q$, of order $\log$, with the one of the next order part.
The latter presents one more derivative both in $\xi$ and in $x$ with
respect to the former. The derivative $\d_\xi$ makes the order decrease of $1$; the space derivative, instead, acts as described in
Lemma \ref{l:symb}. Therefore, two different kinds of terms occur in the sub-subprincipal symbol:
\begin{itemize}
 \item[(i)] terms which present a product of the type $\d_x\left(\alpha^\mu\right)\,\d_x\left(\alpha^\nu\right)$: in this case,
the order of these terms increases of $\log$;
\item[(ii)] terms which have a factor of the type $\d^2_x(\alpha^\mu)$: in this case, instead, the order increases of $1-\log$.
\end{itemize}
Therefore, first terms have total order equal to $\log-1+\log\,=\,2\log-1$, while the other ones are of order $\log-1+1-\log\,=\,0$.
Hence, the sub-subprincipal part of $Q$ is at most of order $0$.
\end{rem}

Arguing as in remark \ref{r:order}, it's easy to see that the remainder terms, which we have denoted by $l.o.t.$ in the previous equalities, are of lower order. As a matter of fact, by symbolic calculus they present one more derivative both in $x$ and in $\xi$: hence, their order is (at least) $1-\log$ lower than that of the sub-subprincipal part. In particular, their Sobolev norms can be bounded in terms of those of the sub-subprincipal part.

Therefore, we finally obtain the following control on $T_2$:
$$
|T_2|\,\leq\,C\,\|\d_tu\|_{H^{-1/2}}\,\|Qu\|_{H^{1/2}}\,\leq\,C\,E(t)\,.
$$

\subsection{Final estimates}

Putting all the proved inequalities together, we get the estimate
$$
\frac{d}{dt}E(t)\,\leq\,C_1\,E(t)\,+\,C_2\,(E(t))^{1/2}\,\|Lu(t)\|_{H^{-1/2}}\,,
$$
for some positive constants $C_1$, $C_2$ depending only on $\lambda_0$, $\Lambda_0$ and on the Zygmund norms, both in
space and time, of the coefficients $a_{jk}$.

Applying Gronwall's inequality to previous estimate entails
\begin{equation} \label{est:E-final}
 (E(t))^{1/2}\,\leq\,C\,e^{\lambda\,t}\left((E(0))^{1/2}\,+\,
\int^t_0e^{-\lambda\,\tau}\left\|Lu(\tau)\right\|_{H^{-1/2}}\,d\tau\right)\,.
\end{equation}
So, remembering inequalities \eqref{est:E_0} and \eqref{est:E_t}, we manage to bound the norm of the solution in
$H^{1/2}\times H^{-1/2}$ in terms of initial data and external force only: for all $t\in[0,T]$,
\begin{eqnarray}
 \left\|\d_tu(t)\right\|_{H^{-1/2}}\,+\,\left\|u(t)\right\|_{H^{1/2}} & \leq & C\,e^{\lambda\,t}\,
\biggl(\left\|\d_tu(0)\right\|_{H^{-1/2}}\,+\,\left\|u(0)\right\|_{H^{1/2}}\,+ \label{est:sol-fin} \\
& & \qquad\qquad\qquad\qquad\quad
+\,\int^t_0e^{-\lambda\,\tau}\,\left\|Lu(\tau)\right\|_{H^{-1/2}}\,d\tau\biggr)\,, \nonumber
\end{eqnarray}
which is actually the thesis of Theorem \ref{th:en_est}.

Moreover, this relation implies, in particular, well-posedness in the space $H^{1/2}\times H^{-1/2}$.

\section{On the $H^\infty$ well-posedness} \label{s:H^inf}

The aim of this section is to prove the following result.
\begin{thm} \label{th:wp}
Let $L$ be the operator defined by \eqref{eq:op}, and assume it is strictly hyperbolic with bounded coefficients,
i.e. relation \eqref{h:hyp} holds true. \\
Suppose that its coefficients $a_{ij}$, $b_j$ and $c$ are all of class $\mc{C}^\infty_b(\R^N)$,
and that, in addition, the $a_{ij}$'s are Zygmund continuous with respect to the time variable: for all $\tau\geq0$ one has
$$
\sup_{(t,x)}\,\biggl|a_{ij}(t+\tau,x)\,+\,a_{ij}(t-\tau,x)\,-\,2\,a_{ij}(t,x)\biggr|\;\leq\;K_0\,\tau\,.
$$


Then the related Cauchy problem is well-posed in the space $H^\infty$, globally in time.
\end{thm}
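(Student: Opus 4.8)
The plan is to repeat, essentially word for word, the proof of Theorem~\ref{th:en_est} carried out in Section~\ref{s:proof}, taking advantage of the two features that distinguish the present situation: the coefficients are $\mc{C}^\infty_b$ with respect to $x$, and the quantity one wants to control is $\|u\|_{H^{s+1}}+\|\d_tu\|_{H^s}$ for an \emph{arbitrary} $s\in\R$. First I would record how the paradifferential machinery simplifies. Since $a_{ij}(t,\cdot),b_j(t,\cdot),c(t,\cdot)$ are smooth and bounded with all their derivatives, uniformly in $t$, the symbol $\wtilde\alpha(t,x,\xi)=\sum_{j,k}a_{jk}(t,x)\,\xi_j\,\xi_k+\g^2$ is smooth in $x$ (with $\d_x^\beta\wtilde\alpha$ still satisfying the order-$2$ bounds: no extra powers of $(\g+|\xi|)$ and \emph{no logarithmic factors}), Zygmund in $t$ by hypothesis, and smooth in $\xi$. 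Hence in Lemma~\ref{l:symb} and Theorem~\ref{t:symb_calc} every $\log(1+\g+|\xi|)$ produced by a space derivative falling on a coefficient disappears: in a composition the subprincipal term loses a full order with no $\log$ and the remainder loses two orders. The time regularisation (with $\veps=(\g^2+|\xi|^2)^{-1/2}$ as in \eqref{eq:param}) and the estimates \eqref{est:ell}--\eqref{est:d_tt-f} are unchanged, while the ``change of operator'' Lemmas~\ref{l:L->T} and \ref{l:b} now hold for \emph{every} $s\in\R$ (when $a_{jk},b_j\in\mc{C}^\infty_b$ the remainders $a_{jk}\d_ku-T_{a_{jk}}\d_ku$ and $b_jv-T_{b_j}v$ are smoothing of arbitrary order); so \eqref{eq:op-change} is available.

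Next I would fix $s\in\R$ and use the Tarama-type energy anticipated in Remark~\ref{r:new_en}: with a weight $\beta$ of order $s$ of the form $\beta=\alpha^{-1/4}f$ (the admissible choice), set
$$
\wtilde v:=T_\beta\d_tu-T_{\d_t\beta}u\,,\qquad
\wtilde w:=T_{\beta\,\alpha^{1/2}}u\,,\qquad
\wtilde E(t):=\|\wtilde v(t)\|^2_{L^2}+\|\wtilde w(t)\|^2_{L^2}\,,
$$
so that, by Corollary~\ref{c:pos} and the choice of $\g$, one has $\wtilde E(t)^{1/2}\sim\|u(t)\|_{H^{s+1}}+\|\d_tu(t)\|_{H^s}$. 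Differentiating $\wtilde E$ in time and inserting \eqref{eq:op-change} exactly as in Section~\ref{s:proof}, all the lower-order contributions ($T_{b_j}\d_ju$, $\wtilde B_j\d_ju$, $cu$, $Ru$, $\Re(T_\alpha-T_{\wtilde\alpha})u$, $T_{\d^2_t\beta}u$, the remainder $M$) are bounded by $C\,\wtilde E(t)$ just as there. The only genuinely new point is the analysis of the ``Tarama-cancellation'' operators $P$ (appearing in the quantity $T_1$) and $Q=Q_1-Q_2$ (appearing in $T_2$). Running the same computation as in Section~\ref{s:proof}, the principal symbols of $Q_1$ and $Q_2$ still coincide --- this is exactly what the form $\beta=\alpha^{-1/4}f$ guarantees --- so $Q$ has null principal symbol; its subprincipal part, which for $s=-1/2$ vanished identically, is now a genuine operator, but by Remark~\ref{r:order} read in the smooth-in-$x$ situation it is of order $2s+1$ \emph{with no logarithmic loss}, and likewise its sub-subprincipal and remainder contributions, and the subprincipal of $P$, are of suitable order without any $\log$. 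Since $\wtilde E$ controls $\|\d_tu\|_{H^s}$ and $\|u\|_{H^{s+1}}$, an operator of order $2s+1$ is precisely what gets absorbed: $|T_2|\le C\,\|\d_tu\|_{H^s}\,\|Qu\|_{H^{-s}}\le C\,\wtilde E(t)$, and similarly $|T_1|\le C\,\wtilde E(t)$.

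Collecting these bounds yields
$$
\frac{d}{dt}\wtilde E(t)\,\le\,C_1\,\wtilde E(t)\,+\,C_2\,\wtilde E(t)^{1/2}\,\|Lu(t)\|_{H^s}\,,
$$
and Gronwall's Lemma gives, for every $s\in\R$, an energy estimate of the form \eqref{est:thesis} with $H^{s+1}\times H^s$ in place of $H^{1/2}\times H^{-1/2}$ and no loss of derivatives. From this whole scale of estimates the $H^\infty$ well-posedness follows in the usual way: existence is obtained by solving the Cauchy problem for the operators with coefficients mollified in time and passing to the limit by means of the uniform bounds; uniqueness and continuous dependence come directly from the estimates; and the regularity $u\in\bigcap_{s\in\R}\bigl(\mc{C}([0,T];H^{s+1})\cap\mc{C}^1([0,T];H^s)\bigr)$ follows because the bound holds for all $s$.

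The main obstacle is not conceptual but a matter of bookkeeping: for a general $s$ one has to verify that, after the cancellation of the principal and subprincipal symbols of $Q$, the surviving sub-subprincipal and remainder terms are of order at most $2s+1$, hence absorbable by $\wtilde E$. This uses that in the $\mc{C}^\infty_b$-in-$x$ case a factor $\d_x^2(\alpha^\mu)$ raises the order by one while a product $\d_x(\alpha^\mu)\,\d_x(\alpha^\nu)$ raises it by nothing --- both with no $\log$ --- which is exactly the reversal of the phenomenon described in Remark~\ref{r:order}. Once this order count is carried out, every step of the proof of Theorem~\ref{th:en_est} applies verbatim, and Theorem~\ref{th:wp} follows.
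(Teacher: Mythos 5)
Your proposal is correct and follows essentially the same route as the paper: the same Tarama-type energy with a weight of the form $\alpha^{-1/4}f$ (the paper takes $f=\Lambda^\sigma$, giving $s=\sigma-1/2$), the same observation that Lemmas \ref{l:L->T} and \ref{l:b} extend to all relevant Sobolev indices, and the same key point that the subprincipal symbol of $Q$ no longer vanishes but, thanks to the $\mc{C}^\infty_b$ regularity in $x$, has order $2s+1=2\sigma$ with no logarithmic loss and is therefore absorbed by the energy. (Your order count for the $\d^2_x$ terms is slightly pessimistic --- in the smooth case a space derivative raises the order by nothing at all --- but this only strengthens the conclusion.)
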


\begin{proof}
 The proof is based on the computations performed in Section \ref{s:proof}, so we will limit ourselves to point out only
the main differencies.

First of all, we set
$$
 v(t,x)\;:=\;T_{\Lambda^\sigma\alpha^{-1/4}}\d_tu\,-\,T_{\d_t\left(\Lambda^\sigma\alpha^{-1/4}\right)}u\qquad\mbox{ and }\qquad
w(t,x)\;:=\;T_{\Lambda^\sigma\alpha^{1/4}}u\,,
$$
where the symbol $\Lambda(\xi,\g)$ was defined in \eqref{def:Lambda}. As before the energy associated to $u$ is
the quantity
$$
 E(t)\,:=\,\left\|v(t)\right\|^2_{L^2}\,+\,\left\|w(t)\right\|^2_{L^2}\,.
$$

At this point, one has to notice that, thanks to the additional regularity of the coefficients, the thesis of Lemmas \ref{l:L->T}
and \ref{l:b} are true for any $s>0$, and also the multiplication by $c$ maps $H^s$ into itself for the same $s$.

Therefore, if we take a $\sigma>-1/2$, all the previous computations hold true, with no changes.

We have to pay attention only to the analysis of the term $T_2$, defined by \eqref{eq:T_2}.
The principal symbol of $Q$ is still $0$, but the subprincipal one doesn't cancel anymore (recall also Remark \ref{r:new_en}).
Nevertheless, it's easy to see that this time its order is $2\sigma$. As a matter
of fact, also Lemma \ref{l:symb} still holds true, but in the second estimate (i.e. that one where $|\beta|=1$) the
logarithmic loss disappears, due to the additional regularity of the $a_{ij}$'s.

So, we have
$$
|T_2|\,\leq\,C\,\|\d_tu\|_{H^{\sigma-1/2}}\,\|Qu\|_{H^{-\sigma+1/2}}\,\leq\,C\,\|\d_tu\|_{H^{\sigma-1/2}}\,\|u\|_{H^{\sigma+1/2}}
\,\leq\,C\,E(t)\,.
$$

In the end, we arrive to an inequality of the form
\begin{eqnarray*}
 &&\sup_{0\le t \le T} \biggl(\|u(t, \cdot)\|_{H^{\sigma+1/2}}\,  +\,
 \|\partial_t u(t,\cdot)\|_{H^{\sigma-1/2}}\biggr)\,\leq \\
&&\qquad\qquad\qquad
\leq\, C\,e^{\lambda T}\, \left(\|u(0, \cdot)\|_{H^{\sigma+1/2}}+
 \|\partial_t u(0,\cdot)\|_{H^{\sigma-1/2}}+ \int_0^{T}e^{-\lambda t}\,\|L u(t,\cdot)\|_{H^{\sigma-1/2}}\, dt\right),
\end{eqnarray*}
which holds true for any $\sigma>-1/2$ and for positive constants $\lambda$ and $C$ depending only
on $\sigma$ and on the norms of the coefficients of $L$ on the respective functional spaces.

From this relation we immediately gather the $H^\infty$ well-posedness of the Cauchy problem related to $L$.
\end{proof}

\end{document}